\newif\ifJOURNAL
\newif\ifWP
\newif\ifBASIC
\newif\ifFULL
\newif\ifLATIN
\newif\ifnotJOURNAL		
\newif\ifnotWP		
\newif\ifnotFULL	
\newif\ifnotLATIN	
\newcommand{\Takemura}{takemura/etal:zero-one}
  \newcommand{\Grundbegriffe}{kolmogorov:1933full}
  \newcommand{\Cornfeld}{cornfeld/etal:1982}
  \newcommand{\Grundbegriffe}{kolmogorov:1933}
  \newcommand{\Cornfeld}{cornfeld/etal:1982latin}
\newcommand{\Extra}[1]{}
\newif\iftwodates
\renewcommand\maketitle{\begin{titlepage}%
  \let\footnotesize\small
  \let\footnoterule\relax
  \let \footnote \thanks
  \null\vfil
  \vskip 30\p@
  \begin{center}%
    {\LARGE \bf \@title \par}%
    \vskip 3em%
    {\large
     \lineskip .75em%
     \begin{tabular}[t]{c}%
       \@author
     \end{tabular}\par}%
     \vskip 1.5em%
  \end{center}\par
  \vfill
  \begin{center}
    \raisebox{1.5cm}{\includegraphics[width=0.58\textwidth]%
      {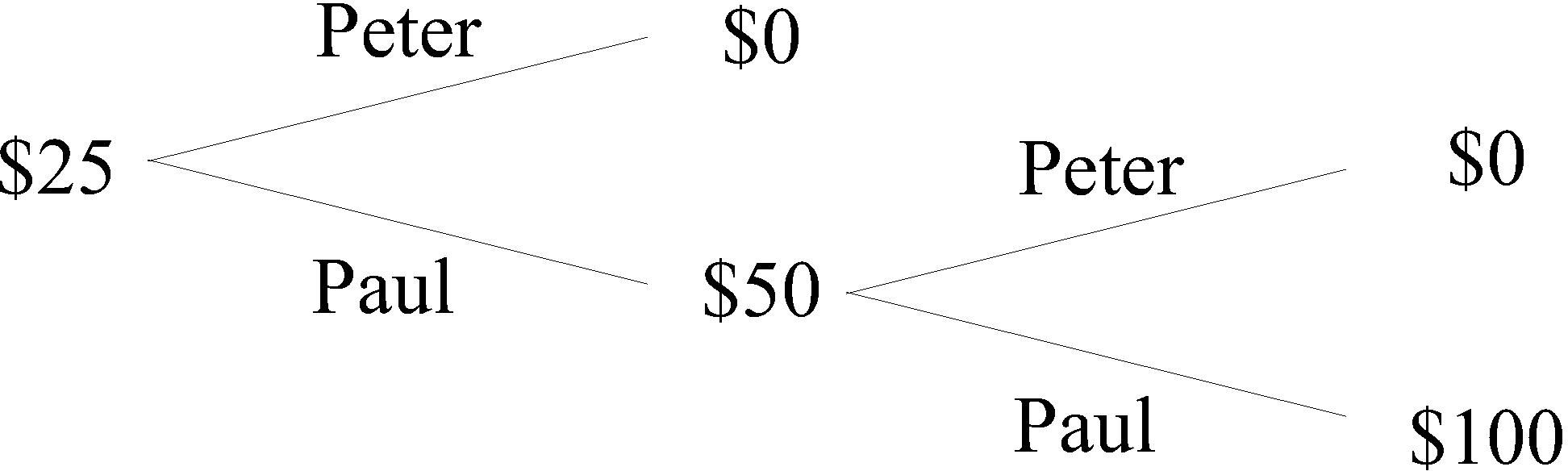}}%
    \hskip 3em%
    \includegraphics[width=0.29\textwidth]%
      {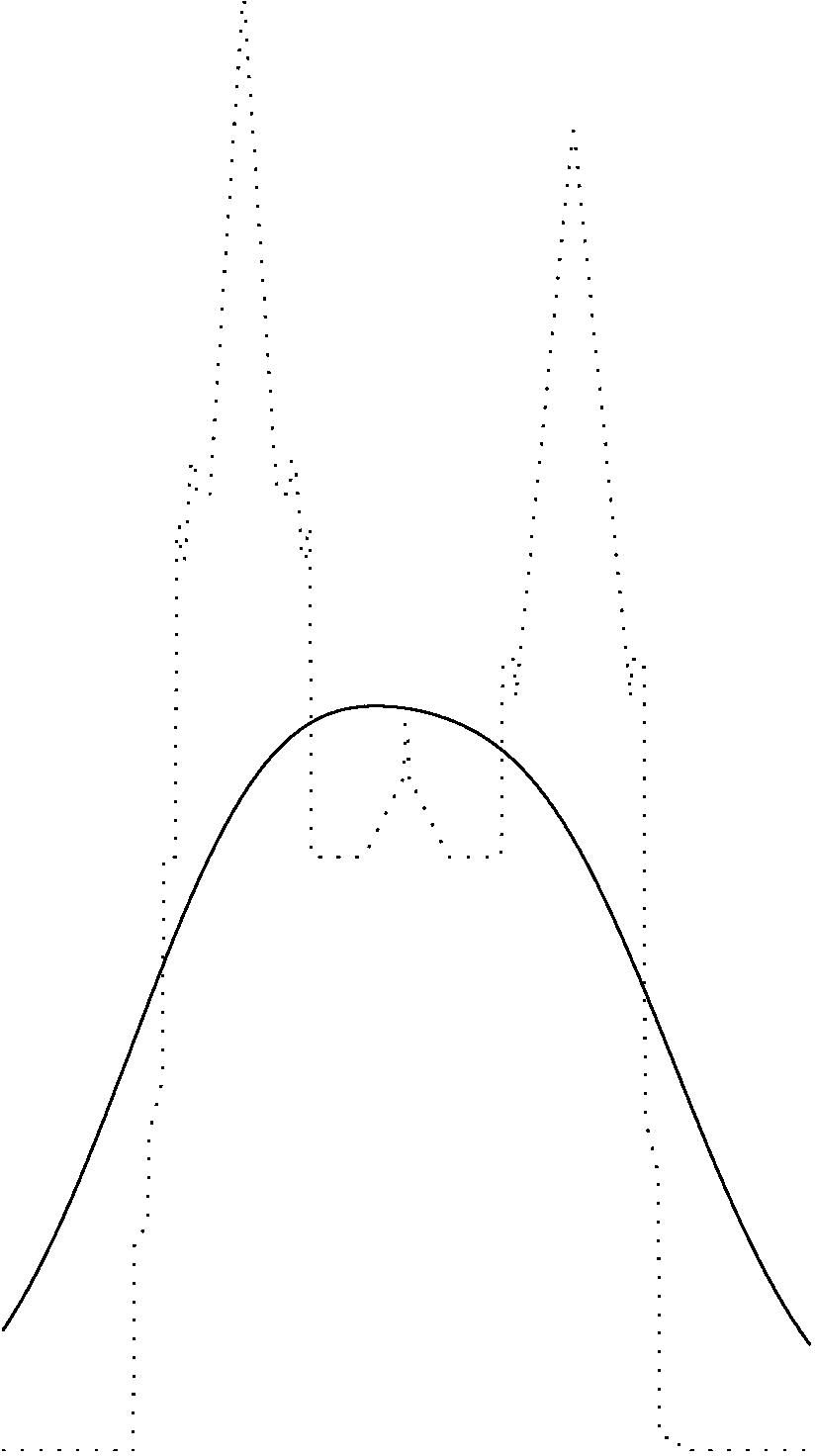}%
  \end{center}
  \@thanks
  \vfill
  \begin{center}
    {\large \bf The Game-Theoretic Probability and Finance Project}
  \end{center}
  \begin{center}
    {\large Working Paper \#\No}
  \end{center}
  \begin{center}
    {\iftwodates\large First posted \firstposted.
    Last revised \@date.\else\large\@date\fi}
  \end{center}
  \begin{center}
    Project web site:\\
    http://www.probabilityandfinance.com
  \end{center}
  \end{titlepage}%
  \setcounter{footnote}{0}%
  \global\let\thanks\relax
  \global\let\maketitle\relax
  \global\let\@thanks\@empty
  \global\let\@author\@empty
  \global\let\@date\@empty
  \global\let\@title\@empty
  \global\let\title\relax
  \global\let\author\relax
  \global\let\date\relax
  \global\let\and\relax
}
\renewenvironment{abstract}{%
  \titlepage
  \null\vfil
  \@beginparpenalty\@lowpenalty
  \begin{center}%
    \Large \bfseries \abstractname
    \@endparpenalty\@M
  \end{center}}%
  {\par\vfill\tableofcontents\endtitlepage}
\renewenvironment{thebibliography}[1]
  {\section*{\refname}%
  \addcontentsline{toc}{section}{\refname}
  \@mkboth{\MakeUppercase\refname}{\MakeUppercase\refname}%
  \list{\@biblabel{\@arabic\c@enumiv}}%
    {\settowidth\labelwidth{\@biblabel{#1}}%
    \leftmargin\labelwidth
    \advance\leftmargin\labelsep
    \@openbib@code
    \usecounter{enumiv}%
    \let\p@enumiv\@empty
    \renewcommand\theenumiv{\@arabic\c@enumiv}}%
    \sloppy
    \clubpenalty4000
    \@clubpenalty \clubpenalty
    \widowpenalty4000%
    \sfcode`\.\@m}
    {\def\@noitemerr
    {\@latex@warning{Empty `thebibliography' environment}}%
  \endlist}
\newcommand{\Extra}[1]{}
\newcommand{\Extra}[1]{}
\renewcommand{\Extra}[1]{\blue{#1}}
\newcommand{\blue}[1]{\textcolor{blue}{#1}}
\newcommand{\bluebegin}{\begingroup\color{blue}}
\newcommand{\blueend}{\endgroup}
\newcommand{\Vladimir}{Vladimir}
\newcommand{\DOT}{.}
\gdef\mathenddots{\mathinner{\ldotp\ldotp\ldotp\ldotp}}
\gdef\endsentence{\spacefactor=3000}
\newcommand{\st}{\mathrel{|}}
\newcommand{\givn}{\mathrel{|}}
\newcommand{\dd}{\mathrm{d}}		
\newcommand{\K}{\mathcal{K}}		
\newcommand{\EEE}{\mathcal{E}}		
\newcommand{\FFF}{\mathcal{F}}		
\newcommand{\PPP}{\mathcal{P}}		
\newcommand{\SSS}{\mathcal{S}}		
\newcommand{\TTT}{\mathcal{T}}		
\newcommand{\UUU}{\mathcal{U}}		
\newcommand{\XXX}{\mathcal{X}}		
\DeclareMathOperator{\III}{\mathbb{I}}		
\newcommand{\bbbn}{\mathbb{N}}			
\newcommand{\bbbr}{\mathbb{R}}			
\newcommand{\bbbrbar}{\overline{\mathbb{R}}}	
\newcommand{\bbbrbarX}{\smash{\bbbrbar}\vphantom{\bbbr}^{X}}
\newcommand{\bbbrbarXXX}{\smash{\bbbrbar}\vphantom{\bbbr}^{\XXX}}
\newcommand{\bbbrbarOmega}{\smash{\bbbrbar}\vphantom{\bbbr}^{\Omega}}
\newcommand{\bbbp}{\mathbb{P}}			
\DeclareMathOperator{\UpperProb}{\lefteqn{\smash{\overline{\bbbp}}}\phantom{\bbbp}}
\DeclareMathOperator{\LowerProb}{\underline{\bbbp}}	
\newcommand{\bbbe}{\mathbb{E}}			
\DeclareMathOperator{\UpperExpect}{\lefteqn{\smash{\overline{\bbbe}}}\phantom{\bbbe}}
\DeclareMathOperator{\LowerExpect}{\underline{\bbbe}}	
\newtheorem{theorem}{Theorem}
\newtheorem{lemma}{Lemma}
\newtheorem{corollary}{Corollary}
\theoremstyle{definition}
\newtheorem{example}{Example}
\newtheorem*{remark}{Remark}
\newlength{\IndentI}
\newlength{\IndentII}
\newlength{\IndentIII}
\newlength{\IndentIV}
\newlength{\WidthI}
\newlength{\WidthII}
\newlength{\WidthIII}
\newlength{\WidthIV}
  \title{L\'evy's zero-one law in game-theoretic probability}
  \author{Glenn Shafer$^{1,2}$\\
    {\small gshafer@andromeda.rutgers.edu, http://glennshafer.com}\\[1mm]
    Vladimir Vovk$^2$ (corresponding author)\\
    {\small vovk@cs.rhul.ac.uk, http://vovk.net}\\[1mm]
    Akimichi Takemura$^3$\\
    {\small takemura@stat.t.u-tokyo.ac.jp, http://park.itc.u-tokyo.ac.jp/atstat/athp/}}
  \title{L\'evy's zero-one law in game-theoretic probability}
  \author{Glenn Shafer, Vladimir Vovk, and Akimichi Takemura}
  \newcommand{\No}{29}
  \newcommand{\firstposted}{May 3, 2009}
\title{L\'evy's zero-one law in game-theoretic probability}
\author{Glenn Shafer$^{1,2}$, Vladimir Vovk$^2$, and Akimichi Takemura$^3$}
\begin{document}

\ifJOURNAL
  \begin{titlepage}
    \maketitle

    \bigskip

    \begin{center}
      Running head:
      L\'evy's zero-one law in game-theoretic probability
    \end{center}

    \footnotetext[1]{Department of Accounting and Information Systems,
      Rutgers Business School---Newark and New Bruns\-wick,
      180 University Avenue, Newark, NJ 07102, USA.}
    \footnotetext[2]{
      Department of Computer Science,
      Royal Holloway, University of London,
      Egham, Surrey TW20 0EX, England.
      Vladimir Vovk's telephones: +44 1784 443426 (office), +44 7793 534 346 (mobile).}
    \footnotetext[3]{Department of Mathematical Informatics,
      Graduate School of Information Science and Technology,
      University of Tokyo,
      7-3-1 Hongo, Bunkyo-ku, Tokyo 113-0033, Japan.}
  \end{titlepage}
\fi

\ifnotJOURNAL
  \maketitle
\fi

\begin{abstract}
  We prove a non-stochastic version of L\'evy's zero-one law,
  and deduce several corollaries from it,
  including non-stochastic versions of Kolmogorov's zero-one
  law\ifnotJOURNAL, \fi\ifJOURNAL\ and \fi
  the ergodicity of Bernoulli shifts\ifnotJOURNAL,
    and a zero-one law for dependent trials\fi.
  Our secondary goal is to explore the basic definitions
  of game-theoretic probability theory,
  with L\'evy's zero-one law serving a useful role.
\end{abstract}

\ifJOURNAL
  \noindent
  \emph{Keywords:}
  Doob's martingale convergence theorem;
  ergodicity of Bernoulli shifts;
  Kolmogorov's zero-one law;
  L\'evy's martingale convergence theorem
\fi

\ifBASIC
\footnotetext[1]{Department of Accounting and Information Systems,
  Rutgers Business School---Newark and New Bruns\-wick,
  180 University Avenue, Newark, NJ 07102, USA.}
\footnotetext[2]{Computer Learning Research Centre,
  Department of Computer Science,
  Royal Holloway, University of London,
  Egham, Surrey TW20 0EX, England.}
\footnotetext[3]{Department of Mathematical Informatics,
  Graduate School of Information Science and Technology,
  University of Tokyo,
  7-3-1 Hongo, Bunkyo-ku, Tokyo 113-0033, Japan.}
\fi

\section{Introduction}

In this article,
we prove a game-theoretic version of L\'evy's zero-one law.
It applies in situations
where standard statements of L\'evy's zero-one law
(\cite{levy:1937}, Section~41) do not apply,
because we do not postulate a probability measure on outcomes.
This is typical for game-theoretic probability:
see, e.g., \cite{shafer/vovk:2001}.
Upper and lower probabilities do emerge naturally
in prediction protocols considered in game-theoretic probability,
but in many cases lower probabilities are strictly less
than the corresponding upper probabilities,
and so they fall short of defining a probability measure.


The investigation of zero-one laws of game-theoretic probability
was started in \cite{\Takemura}.
From our game-theoretic version of L\'evy's zero-one law
we deduce game-theoretic versions
of Kolmogorov's zero-one law
(\cite{\Grundbegriffe}, Appendix)\ifnotJOURNAL, \fi\ifJOURNAL\ and \fi
the ergodicity of Bernoulli shifts
(see, e.g., \cite{\Cornfeld}, Section 8.1, Theorem 1)\ifnotJOURNAL,
  and B\'artfai and R\'ev\'esz's \cite{bartfai/revesz:1967} zero-one law\fi.
\ifnotJOURNAL The first two \fi\ifJOURNAL These \fi
results have been established in \cite{\Takemura},
but our proofs are different:
we obtain them as easy corollaries of our main result.

We start our exposition in Section~\ref{sec:expectation}
by introducing our basic prediction protocol
and defining the game-theoretic notions of expectation and probability
(upper and lower);
these definitions are explored in Section~\ref{sec:equivalent}.
In Section \ref{sec:levy} we prove L\'evy's zero-one law
for this protocol.
In the second part of Section \ref{sec:levy}
we consider the special case of an event
whose upper probability coincides with its lower probability;
our version of L\'evy's zero-one law for such events
looks much more similar to the standard statement.
Section~\ref{sec:implication} describes
a useful application of L\'evy's zero-one law
to the foundations of game-theoretic probability theory.
In Section \ref{sec:explicit} we derive two other non-stochastic zero-one laws
(Kolmogorov's zero-one law and the ergodicity of Bernoulli shifts)
as corollaries.
In Section \ref{sec:generality} we explain that our prediction protocol
covers as special cases seemingly more general protocols
considered in literature.
\ifnotJOURNAL
  This helps us to derive a non-stochastic version of one more zero-one law
  (B\'artfai and R\'ev\'esz's)
  in Section~\ref{sec:bartfai-revesz}.
\fi

We will be using the standard notation
$\bbbn=\{1,2,\ldots\}$
for the set of all natural numbers
and $\bbbr=(-\infty,\infty)$ for the set of all real numbers.
Alongside $\bbbr$
we will often consider sets,
such as $(-\infty,\infty]$ and $\bbbrbar:=[-\infty,\infty]$,
obtained from $\bbbr$ by adding $-\infty$ or $\infty$ (or both).
We set $0\times\infty:=0$ and $\infty+(-\infty):=\infty$
(the operation $+$ extended to $\bbbrbar$ by $\infty+(-\infty):=\infty$
is denoted by $\stackrel{\ldotp}{+}$ in \cite{hoffmann-jorgensen:1987};
in this article we will omit the dot in $\stackrel{\ldotp}{+}$
since we do not need any other extensions of $+$).
The indicator function of a subset $E$ of a given set $X$
will be denoted $\III_E$;
i.e., $\III_E:X\to\bbbr$ takes the value $1$ on $E$
and the value $0$ outside $E$.
The words such as ``positive'' and ``negative''
are to be understood
in the wide sense of inequalities~$\ge$ and~$\le$
rather than~$>$ and~$<$.

\section{Game-theoretic expectation and probability}
\label{sec:expectation}

We consider a perfect-information game
between two players called World and Skeptic.
The game proceeds in discrete time.
First we describe the game formally,
and then briefly explain the intuition behind the formal description.

Let $X$ be a set,
and let $\bbbrbarX$ stand for the set of all functions $f:X\to\bbbrbar$.
A function $\EEE:\bbbrbarX\to\bbbrbar$
is called an \emph{outer probability content}
if it satisfies the following four axioms:
\begin{enumerate}
\item\label{ax:order}
  If $f,g\in\bbbrbarX$ satisfy $f\le g$,
  then $\EEE(f)\le\EEE(g)$.
\item\label{ax:scaling}
  If $f\in\bbbrbarX$ and $c\in(0,\infty)$, then $\EEE(cf)=c\EEE(f)$.
\item\label{ax:sum}
  If $f,g\in\bbbrbarX$, then $\EEE(f+g)\le\EEE(f)+\EEE(g)$.
\item\label{ax:norm}
  For each $c\in\bbbr$,
  $\EEE(c)=c$,
  where the $c$ in parentheses
  is the function in $\bbbrbarX$ that is identically equal to $c$.
\end{enumerate}
The function $\EEE$ is called a \emph{superexpectation functional}
(or \emph{superexpectation} for brevity)
if, in addition, it satisfies the following axiom
(sometimes referred to as \emph{$\sigma$-subadditivity on $[0,\infty]^X$}).
\begin{enumerate}
\setcounter{enumi}{4}
\item\label{ax:countable}
  For any sequence of positive functions $f_1,f_2,\ldots$ in $\bbbrbarX$,
  \begin{equation}\label{eq:subadditivity}
    \EEE
    \left(
      \sum_{k=1}^{\infty}
      f_k
    \right)
    \le
    \sum_{k=1}^{\infty}
    \EEE
    \left(
      f_k
    \right).
  \end{equation}
  \ifFULL\bluebegin
    The old version was:
    If a sequence $f_1,f_2,\ldots\in\bbbrbarX$ increases monotonically
    to a limit $f\in\bbbrbarX$, then
    \begin{equation*}
      \EEE(f) = \lim_{k\to\infty}\EEE(f_k).
    \end{equation*}
    The old version and Axiom \ref{ax:sum} imply the new version.
  \blueend\fi
\end{enumerate}

Replacing the $=$ in Axiom~\ref{ax:scaling} with $\le$
leads to an equivalent statement
because, for $c\in(0,\infty)$,
$\EEE(cf)\le c\EEE(f)=c\EEE((1/c)cf)\le\EEE(cf)$.
In presence of Axiom~\ref{ax:order},
we can allow $c\in\bbbrbar$ in Axiom~\ref{ax:norm}
without changing the content of the latter.
Axiom~\ref{ax:norm} implies $\EEE(0)=0$
(so that we can allow $c=0$ in Axiom \ref{ax:scaling}).
This, in combination with Axiom \ref{ax:order},
implies
\begin{equation}\label{eq:weak-coherence}
  f\ge0
  \Longrightarrow
  \EEE(f)\ge0.
\end{equation}
Axioms \ref{ax:sum} and \ref{ax:norm} imply that
\begin{equation}\label{eq:constant}
  \EEE(f+c)=\EEE(f)+c
\end{equation}
for each $c\in\bbbr$
(indeed, $\EEE(f+c)\le\EEE(f)+\EEE(c)=\EEE(f)+c$
and $\EEE(f)\le\EEE(f+c)+\EEE(-c)=\EEE(f+c)-c$).
From (\ref{eq:weak-coherence}) and (\ref{eq:constant})
we can see that,
for any $c\in\bbbr$,
\begin{equation}\label{eq:coherence}
  \EEE(f)< c
  \Longrightarrow
  \inf_{x\in\XXX}f(x)< c.
\end{equation}

Axioms 1--5 are relaxations of the standard properties
of the expectation functional:
cf., e.g., Axioms 1--5 in \cite{whittle:2000}
(Axioms \ref{ax:scaling} and \ref{ax:sum}
are weaker than the corresponding standard axioms,
Axioms \ref{ax:order} and \ref{ax:norm}
are stronger than the corresponding standard axioms
but follow from standard Axioms~1--4,
and Axiom~\ref{ax:countable} follows from standard Axiom~5
in the presence of our Axiom~\ref{ax:sum}).

\ifFULL\bluebegin
  Axiom~\ref{ax:order} and our old Axiom~\ref{ax:countable}
  are the defining properties of \emph{precapacities}:
  cf.\ \cite{dellacherie:1972}, Definition~1 in Chapter~II.
\blueend\fi

The most controversial axiom is Axiom~\ref{ax:countable}.
It is satisfied in many interesting cases,
such as in the case of finite $\XXX$
and for many protocols in \cite{shafer/vovk:2001}.
Axiom~\ref{ax:countable} is convenient and often makes proofs easier.
However, most of the results in this article hold without it,
as pointed out by a referee.
For our principal results,
we first prove them assuming Axiom~\ref{ax:countable}
but then give an additional argument to get rid of the reliance on it.

The most noticeable difference
between what we call superexpectation functionals
and the standard expectation functionals
is that the former are defined for all functions
$f:\XXX\to\bbbrbar$
whereas the latter are defined only for functions
that are measurable w.r.\ to a given $\sigma$-algebra.
The notion of superexpectation functional is more general
since every expectation functional can be extended
to the whole of $\bbbrbarX$ as the corresponding upper integral.
Namely, $\EEE(f)$ can be defined
as the infimum of the expectation of $g$
(taken to be $\infty$ whenever the expectation of $\max(g,0)$ is $\infty$)
over all measurable functions $g\ge f$.
The extension may no longer be an expectation functional
but is still a superexpectation functional.

\begin{remark}
  Superexpectation functionals have been studied in the past
  in many different contexts under different names;
  in our terminology we mainly follow \cite{hoffmann-jorgensen:1987},
  except that we abbreviate
  ``outer probability content $\sigma$-subadditive on $[0,\infty]^X$''
  to ``superexpectation functional''.
  Upper previsions studied in the theory of imprecise probabilities
  are closely related to superexpectation functionals,
  one difference being that upper previsions
  are only defined on the bounded functions in $\bbbrbarX$.
  There is a burgeoning literature, started by \cite{artzner/etal:1999},
  on coherent measures of risk,
  which are close to being mappings $f\mapsto\EEE(-f)$,
  where $\EEE$ is an outer probability content;
  coherent measures of risk, however,
  are usually defined only for functions $f$
  that are measurable and do not take values $\pm\infty$.
\end{remark}

\ifnotJOURNAL
\begin{remark}
  It is sometimes useful to have the stronger form
  \begin{equation}\label{eq:strong-coherence}
    f>0
    \Longrightarrow
    \EEE(f)>0
  \end{equation}
  of (\ref{eq:weak-coherence}).
  Even the strong form (\ref{eq:strong-coherence})
  follows from Axioms \ref{ax:order}--\ref{ax:countable}.
  Indeed, if $f>0$ but $\EEE(f)=0$,
  Axioms \ref{ax:order}--\ref{ax:countable} imply
  \begin{multline*}
    1
    \stackrel{\ref{ax:norm}}{=}
    \EEE
    \left(
      \III_{\{f>0\}}
    \right)
    =
    \EEE
    \left(
      \III_{\cup_{n=1}^{\infty}\{nf\ge1\}}
    \right)
    \stackrel{\ref{ax:order}}{\le}
    \EEE
    \left(
      \sum_{n=1}^{\infty}\III_{\{nf\ge1\}}
    \right)\\
    \stackrel{\ref{ax:countable}}{\le}
    \sum_{n=1}^{\infty}
    \EEE
    \left(
      \III_{\{nf\ge1\}}
    \right)
    \stackrel{\ref{ax:order}}{\le}
    \sum_{n=1}^{\infty}\EEE(nf)
    \stackrel{\ref{ax:scaling}}{=}
    0
  \end{multline*}
  (over each relation symbol
  we write the ordinal number of the axiom that justifies it;
  we could avoid using Axiom \ref{ax:scaling}
  by using (\ref{eq:weak-coherence}) and Axiom \ref{ax:sum} instead).
\end{remark}
\fi

The main prediction protocol that we consider in this article is as follows.

\medskip

\noindent
\textsc{Protocol~1. Basic prediction protocol}

\noindent
\textbf{Parameters:} non-empty set $\XXX$\\
  \hspace*{\IndentII}and outer probability contents $\EEE_1,\EEE_2,\ldots$ on $\XXX$

\noindent
\textbf{Protocol:}

\parshape=6
\IndentI   \WidthI
\IndentI   \WidthI
\IndentII  \WidthII
\IndentII  \WidthII
\IndentII  \WidthII
\IndentI   \WidthI
\noindent
Skeptic announces $\K_0\in\bbbrbar$.\\
FOR $n=1,2,\ldots$:\\
  Skeptic announces $f_n\in\bbbrbarXXX$ such that $\EEE_n(f_n)\le\K_{n-1}$.\\
  World announces $x_n\in\XXX$.\\
  $\K_n := f_n(x_n)$.\\
END FOR

\medskip

\noindent
The set $\XXX$ will be called the \emph{outcome space};
this set and the other parameters of the game
(namely, the outer probability contents $\EEE_1,\EEE_2,\ldots$ on $\XXX$)
will be fixed until Section~\ref{sec:generality}.
An important special case is
where $\EEE_1,\EEE_2,\ldots$ are superexpectations on $\XXX$,
but as we said, our principal results will not require this assumption.

At the beginning of each trial $n$
Skeptic chooses a gamble represented as a function $f_n$ on $\XXX$.
After that World chooses the outcome $x_n$ of this trial,
which determines the payoff $f_n(x_n)$ of Skeptic's gamble.
The gambles available to Skeptic at trial $n$ are determined by $\EEE_n$.
Skeptic's capital after the $n$th trial is denoted $\K_n$.
He is allowed to choose his initial capital $\K_0$
and, implicitly, also allowed to throw away part of his capital
at each trial.
Our definitions imply that Skeptic is allowed not to play
at trial $n$ (and thus keep his money intact)
by choosing $f_n \equiv K_{n-1}$ (cf.\ Axiom~\ref{ax:norm}).
Property (\ref{eq:coherence}) reflects what is sometimes called
the ``coherence'' of the protocol;
in its absence the protocol becomes a money machine for Skeptic.
(See also Lemma~\ref{lem:referee-1} below.)
Protocol~1 covers the apparently more general case
where the superexpectations $\EEE_n$ are not fixed in advance
but chosen by a third player:
see Section~\ref{sec:generality} below.

\begin{remark}
  In \cite{\Takemura} we considered a different
  but essentially equivalent prediction protocol
  (\cite{\Takemura}, Protocol~2).
  For connections with Peter Walley's theory of imprecise probabilities,
  see the recent article \cite{decooman/hermans:2008}.
\end{remark}

\begin{remark}
  An apparently more general version of Protocol~1
  is where, at each trial $n$,
  World chooses the outcome $x_n$ from a set $\XXX_n$
  which may depend on $n$
  and $\EEE_n$ is an outer probability content on $\XXX_n$.
  However, this version immediately reduces to our Protocol~1
  by setting $\XXX:=\bigcup_{n=1}^{\infty}\XXX_n$
  and extending each $\EEE_n$ to $\bbbrbarXXX$ as
  $\EEE'_n(f):=\EEE_n(f|_{\XXX_n})$,
  $f|_{\XXX_n}$ being the restriction of $f$ to $\XXX_n$.
\end{remark}

We call the set $\Omega:=\XXX^{\infty}$
of all infinite sequences of World's moves the \emph{sample space}.
The elements of the set
$\XXX^*:=\bigcup_{n=0}^{\infty}\XXX^n$
of all finite sequences of World's moves are called \emph{situations}.
For each situation $s$ we let $\Gamma(s)\subseteq\Omega$
stand for the set of all infinite extensions in $\Omega$ of $s$
(i.e., $\Gamma(s)$ is the set of all $\omega\in\Omega$
such that $s$ is a prefix of $\omega$).
Let $\Box$ be the empty situation.
If $s$ is a situation and $x\in\XXX$,
$sx$ is the situation obtained from  $s$ by adding $x$ on the right;
therefore,
$sx=x_1\ldots x_nx$ when $s=x_1\ldots x_n$.
If $s$ and $t$ are two situations,
we write $s\subseteq t$ when $s$ is a prefix of $t$,
and we write $s\subset t$ when $s\subseteq t$ and $s\ne t$.
We will also be using derived notation such as
$s\subseteq u\subseteq t$, $s\not\subseteq t$, and $s\supseteq t$.

The \emph{length} $\lvert s\rvert$ of a situation $s\in\XXX^n$ is $n$
(i.e., $\lvert s\rvert$ is the length of $s$ as a finite sequence);
in particular, $\lvert \Box\rvert = 0$.
If $\omega\in\Omega$ and $n\in\{0,1,\ldots\}$,
$\omega^n$ is defined to be the unique situation of length $n$
that is a prefix of $\omega$.
For $\omega=x_1x_2\ldots\in\Omega$ and $n\in\bbbn$,
we let $\omega_n\in\XXX$ stand for $x_n$.

A strategy $\Sigma$ for Skeptic is a pair $(\Sigma_0,\Sigma_1)$,
where $\Sigma_0\in\bbbrbar$
(informally, this is the initial capital chosen by Skeptic)
and $\Sigma_1:\XXX^*\to\bbbrbarXXX$ is a function
satisfying $\EEE_1(\Sigma_1(\Box))\le\Sigma_0$ and
$\EEE_n(\Sigma_1(sx))\le\Sigma_1(s)(x)$
for all $n\ge2$, $s\in\XXX^{n-2}$, and $x\in\XXX$
(informally, for each situation $s$, $\Sigma_1(s)$ is the function chosen by Skeptic
in situation $s$).
If we fix a strategy $\Sigma=(\Sigma_0,\Sigma_1)$ for Skeptic,
his capital $\K_n$ becomes a function
of the current situation $s=x_1\ldots x_n$ of length $n$.
We write $\K^{\Sigma}(s)$ for $\K_n$
resulting from Skeptic following $\Sigma$ and from World playing $s$.
Formally,
the function $\K^{\Sigma}:\XXX^*\to\bbbrbar$ is defined by
$\K^{\Sigma}(\Box):=\Sigma_0$
and
$
  \K^{\Sigma}(sx)
  :=
  \Sigma_1(s)(x)
$
for all $s\in\XXX^*$ and all $x\in\XXX$.
This function will be called the \emph{capital process} of $\Sigma$.
A function $\SSS$ is called a (game-theoretic) \emph{supermartingale}
if it is the capital process, $\SSS=\K^{\Sigma}$,
of some strategy $\Sigma$ for Skeptic.

Notice that a function $\SSS$ is a supermartingale
if and only if $\SSS:\XXX^*\to\bbbrbar$
and, for all $n\in\bbbn$ and all situations $s\in\XXX^{n-1}$ of length $n-1$,
$$
  \EEE_n
  \left(
    \SSS(s\,\cdot)
  \right)
  \le
  \SSS(s),
$$
where, as usual, $\SSS(s\,\cdot):\XXX\to\bbbrbar$ is the function
mapping each $x\in\XXX$ to $\SSS(sx)$.
A supermartingale $\SSS$ is a \emph{martingale}
if
$
  \EEE_n
  (\SSS(s\,\cdot))
  =
  \SSS(s)
$
for all $n\in\bbbn$ and $s\in\XXX^{n-1}$.

\begin{remark}
  Martingales are less useful for us than supermartingales
  since the sum of two martingales may fail to be a martingale
  (the inequality in Axiom~\ref{ax:sum} may be strict),
  whereas the sum of two supermartingales is always a supermartingale.
  Under Axiom~\ref{ax:countable},
  even a countable sum of positive supermartingales
  is a supermartingale.
\end{remark}

The following useful property of supermartingales
follows easily from (\ref{eq:coherence}).
\begin{lemma}\label{lem:referee-1}
  For each supermartingale $\SSS$ and each situation $s$,
  $$
    \SSS(s)
    \ge
    \inf_{\omega\in\Gamma(s)}
    \overline\SSS(\omega),
  $$
  where $\overline\SSS(\omega)$
  is defined to be
  $\limsup_{n\to\infty}\SSS(\omega^n)$
  for all $\omega\in\Omega$.
\end{lemma}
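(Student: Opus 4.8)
The plan is to reduce the statement to the coherence property~(\ref{eq:coherence}) of the outer probability contents $\EEE_n$, applying it step by step to build a single infinite extension of $s$ along which the supermartingale stays below any prescribed real level. Fix a supermartingale $\SSS$ and a situation $s$. If $\SSS(s)=+\infty$ there is nothing to prove, so I would assume $\SSS(s)<\infty$. It then suffices to show that for every real $c$ with $c>\SSS(s)$ there is an $\omega\in\Gamma(s)$ with $\overline\SSS(\omega)\le c$: letting $c$ decrease to $\SSS(s)$ gives $\inf_{\omega\in\Gamma(s)}\overline\SSS(\omega)\le\SSS(s)$, and this formulation also disposes of the case $\SSS(s)=-\infty$ (where $c$ simply ranges over all of $\bbbr$).

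So I would fix such a $c$ and construct $\omega=s\,\omega_{\lvert s\rvert+1}\omega_{\lvert s\rvert+2}\dots\in\Gamma(s)$ by induction on $n$ so that $\SSS(\omega^n)<c$ for every $n\ge\lvert s\rvert$. The base case $n=\lvert s\rvert$ is just $\SSS(s)<c$. For the inductive step, assume $\SSS(\omega^n)<c$. Since $\SSS$ is a supermartingale, $\EEE_{n+1}(\SSS(\omega^n\,\cdot))\le\SSS(\omega^n)<c$, so by~(\ref{eq:coherence})---which is available here because it was derived from Axioms~\ref{ax:order}--\ref{ax:norm}, all of which are satisfied by outer probability contents---we get $\inf_{x\in\XXX}\SSS(\omega^n x)<c$; I then let $\omega_{n+1}$ be any $x\in\XXX$ with $\SSS(\omega^n x)<c$. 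As $\XXX$ is non-empty, the construction never stalls, and the resulting $\omega$ satisfies $\overline\SSS(\omega)=\limsup_{n\to\infty}\SSS(\omega^n)\le c$, which is what was needed.

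This is about as routine as a lemma can be, so I do not anticipate a genuine obstacle; the only points to watch are the degenerate values $\SSS(s)=\pm\infty$ and the fact that~(\ref{eq:coherence}) yields a \emph{strict} inequality $\inf_{x\in\XXX}\SSS(\omega^n x)<c$, which is exactly what lets me name a concrete move $\omega_{n+1}$ strictly below the level $c$. It also seems worth noting, in line with the paper's stated program, that Axiom~\ref{ax:countable} plays no role in this argument.
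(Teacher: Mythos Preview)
Your argument is correct and follows essentially the same route as the paper's own proof: fix a real level $r>\SSS(s)$ and use the coherence property~(\ref{eq:coherence}) inductively to extend $s$ one coordinate at a time so that $\SSS$ stays strictly below $r$ along the resulting $\omega$. Your version is slightly more explicit about the degenerate cases $\SSS(s)=\pm\infty$, but there is no substantive difference.
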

\begin{proof}
  Let $s=x_1\ldots x_k\in\XXX^k$
  and let $r>\SSS(s)$ be given.
  Since $\EEE_{k+1}(\SSS(s\,\cdot))\le\SSS(s)<r$,
  by (\ref{eq:coherence})
  there exists $x_{k+1}\in\XXX$ such that $\SSS(x_1\ldots x_{k+1})<r$.
  Repeating the argument we can find $x_{k+1}x_{k+2}\ldots$
  such that $\SSS(x_1\ldots x_n)<r$ for all $n\ge k$.
  Setting $\omega:=x_1x_2\ldots$,
  we have $\omega\in\Gamma(s)$ and $\SSS(\omega^n)<r$ for all $n\ge k$.
  This completes the proof.
\end{proof}

For each function $\xi:\Omega\to\bbbrbar$
and each situation $s$,
we define the (conditional) \emph{upper expectation} of $\xi$ given $s$ by
\begin{multline}\label{eq:upper-expectation}
  \UpperExpect(\xi\givn s)
  :=
  \inf
  \Bigl\{
    a
    \bigm|
    \exists\SSS: \SSS(s)=a \text{ and }\\
    \liminf_{n\to\infty} \SSS(\omega^n)
    \ge
    \xi(\omega)
    \text{ for all $\omega\in\Gamma(s)$}
  \Bigr\}
\end{multline}
where $\SSS$ ranges over the supermartingales that are bounded below,
and we define the \emph{lower expectation} of $\xi$ given $s$ by
\begin{equation}\label{eq:lower-expectation}
  \LowerExpect(\xi\givn s)
  :=
  -
  \UpperExpect
  \left(
    -\xi \givn s
  \right).
\end{equation}
If $E$ is any subset of $\Omega$,
its \emph{upper} and \emph{lower probability}
given a situation $s$ are defined by
\begin{equation}\label{eq:probability}
  \UpperProb(E\givn s):=\UpperExpect(\III_E\givn s),
  \quad
  \LowerProb(E\givn s):=\LowerExpect(\III_E\givn s),
\end{equation}
respectively.
In what follows we sometimes refer
to sets $E\subseteq\Omega$ as \emph{events}.
\begin{lemma}\label{lem:referee-3}
  For each situation $s$,
  $\UpperExpect(\cdot\givn s):\bbbrbarOmega\to\bbbrbar$
  is an outer probability content.
\end{lemma}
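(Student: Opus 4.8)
The plan is to verify the four axioms of an outer probability content directly from the definition \eqref{eq:upper-expectation}, using the basic facts about supermartingales that are already available. Throughout, $s$ is fixed, and we abbreviate $\overline{\bbbe}(\cdot\givn s)$ to $\overline{\bbbe}$. The key observation to keep in the background is that the class of supermartingales bounded below is a convex cone that is also closed under adding constants: if $\SSS_1,\SSS_2$ are supermartingales bounded below and $c\in(0,\infty)$, then $c\SSS_1$, $\SSS_1+\SSS_2$, and $\SSS_1+c$ are again supermartingales bounded below, as one sees from the one-step characterization $\EEE_n(\SSS(s\,\cdot))\le\SSS(s)$ together with Axioms~\ref{ax:scaling}, \ref{ax:sum}, and \eqref{eq:constant}. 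Note also that $\liminf$ behaves well under these operations: $\liminf(a_n+b_n)\ge\liminf a_n+\liminf b_n$ and $\liminf(ca_n)=c\liminf a_n$ for $c>0$, which is exactly what is needed to transport the ``$\liminf\SSS(\omega^n)\ge\xi(\omega)$'' condition through these constructions.

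First I would check Axiom~\ref{ax:order} (monotonicity): if $\xi\le\eta$, then any supermartingale $\SSS$ bounded below with $\liminf_n\SSS(\omega^n)\ge\eta(\omega)$ on $\Gamma(s)$ also satisfies $\liminf_n\SSS(\omega^n)\ge\xi(\omega)$, so the infimum defining $\overline{\bbbe}(\xi\givn s)$ is over a larger set and hence is no larger than $\overline{\bbbe}(\eta\givn s)$. Next, Axiom~\ref{ax:scaling}: for $c\in(0,\infty)$, the map $\SSS\mapsto c\SSS$ is a bijection of the competing supermartingales for $\xi$ onto those for $c\xi$, and it scales the value $\SSS(s)$ by $c$; taking infima gives $\overline{\bbbe}(c\xi\givn s)=c\,\overline{\bbbe}(\xi\givn s)$ (one can even argue with $\le$ only, as the paper notes). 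For Axiom~\ref{ax:sum}: given competitors $\SSS_1$ for $\xi$ and $\SSS_2$ for $\eta$, the sum $\SSS_1+\SSS_2$ is a supermartingale bounded below with $\liminf_n(\SSS_1+\SSS_2)(\omega^n)\ge\xi(\omega)+\eta(\omega)$ on $\Gamma(s)$, so it is a competitor for $\xi+\eta$ with value $\SSS_1(s)+\SSS_2(s)$; taking the infimum over $\SSS_1$ and $\SSS_2$ separately yields $\overline{\bbbe}(\xi+\eta\givn s)\le\overline{\bbbe}(\xi\givn s)+\overline{\bbbe}(\eta\givn s)$. (Some care is needed where the values are $\pm\infty$, but the conventions $0\times\infty:=0$ and $\infty+(-\infty):=\infty$ fixed in the introduction handle these.)

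The remaining axiom, \ref{ax:norm} ($\overline{\bbbe}(c\givn s)=c$ for $c\in\bbbr$), is where the two directions require slightly different ideas, and I expect this to be the main obstacle. The inequality $\overline{\bbbe}(c\givn s)\le c$ is easy: take the constant supermartingale $\SSS\equiv c$, which is bounded below, satisfies the supermartingale property by Axiom~\ref{ax:norm}, has $\SSS(s)=c$, and trivially has $\liminf_n\SSS(\omega^n)=c\ge c$. For the reverse inequality $\overline{\bbbe}(c\givn s)\ge c$, suppose $\SSS$ is a supermartingale bounded below with $\liminf_n\SSS(\omega^n)\ge c$ for all $\omega\in\Gamma(s)$; I must show $\SSS(s)\ge c$. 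This is precisely (a mild variant of) Lemma~\ref{lem:referee-1}: that lemma gives $\SSS(s)\ge\inf_{\omega\in\Gamma(s)}\limsup_n\SSS(\omega^n)$. Here I instead want the $\liminf$ version, $\SSS(s)\ge\inf_{\omega\in\Gamma(s)}\liminf_n\SSS(\omega^n)$, but the proof of Lemma~\ref{lem:referee-1} actually delivers this stronger statement: given $r>\SSS(s)$, the construction there produces $\omega\in\Gamma(s)$ with $\SSS(\omega^n)<r$ for \emph{all} $n\ge\lvert s\rvert$, hence $\liminf_n\SSS(\omega^n)\le r$; letting $r\downarrow\SSS(s)$ gives the claim. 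Combined with the hypothesis $\liminf_n\SSS(\omega^n)\ge c$ on $\Gamma(s)$, this yields $\SSS(s)\ge c$, and taking the infimum over such $\SSS$ gives $\overline{\bbbe}(c\givn s)\ge c$. Together with the easy direction this establishes Axiom~\ref{ax:norm}, completing the verification that $\overline{\bbbe}(\cdot\givn s)$ is an outer probability content.
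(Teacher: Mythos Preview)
Your proof is correct and follows essentially the same route as the paper's: verify Axioms~\ref{ax:order}--\ref{ax:sum} by exploiting that bounded-below supermartingales form a convex cone and that $\liminf$ is superadditive, and handle Axiom~\ref{ax:norm} via the constant supermartingale for $\le$ and Lemma~\ref{lem:referee-1} for $\ge$. One small remark on the last step: you need not re-derive a $\liminf$ version of Lemma~\ref{lem:referee-1}, because $\limsup\ge\liminf$ means the hypothesis $\liminf_n\SSS(\omega^n)\ge c$ already forces $\limsup_n\SSS(\omega^n)\ge c$, so Lemma~\ref{lem:referee-1} as stated gives $\SSS(s)\ge c$ directly (and the $\liminf$ inequality you wrote is weaker, not stronger, than the $\limsup$ one, since its right-hand side is no larger).
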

\begin{proof}
  It is evident that Axiom~\ref{ax:order} is satisfied
  for $\UpperExpect(\cdot\givn s)$.
  Axiom~\ref{ax:sum} for $\UpperExpect(\cdot\givn s)$
  follows from the fact
  that, by Axiom~\ref{ax:sum} applied to $\EEE_1,\EEE_2,\ldots$,
  the sum $\UUU:=\SSS+\TTT$
  of two bounded below supermartingales $\SSS$ and $\TTT$
  is again a bounded below supermartingale
  and that it satisfies
  $$
    \liminf_{n\to\infty}\SSS(\omega^n)
    +
    \liminf_{n\to\infty}\TTT(\omega^n)
    \le
    \liminf_{n\to\infty}\UUU(\omega^n)
  $$
  for all $\omega\in\Gamma(s)$.
  In the same manner,
  we can use Axiom~\ref{ax:scaling} applied to $\EEE_1,\EEE_2,\ldots$
  to deduce that $\UpperExpect(\cdot\givn s)$ satisfies Axiom~\ref{ax:scaling}.
  Let $c\in\bbbr$.
  Since the function on $\bbbrbarXXX$ that is identically equal to $c$
  is a supermartingale,
  $\UpperExpect(c\givn s)\le c$.
  Let $\SSS$ be a bounded below supermartingale satisfying
  $\liminf_{n\to\infty}\SSS(\omega^n)\ge c$ for all $\omega\in\Gamma(s)$.
  By Lemma~\ref{lem:referee-1},
  we then have $\SSS(s)\ge c$.
  Therefore, $\UpperExpect(c\givn s)\ge c$,
  which completes the proof of Axiom~\ref{ax:norm}
  for $\UpperExpect(\cdot\givn s)$.
\end{proof}
Lemma~\ref{lem:referee-3} immediately implies the following statement
(cf.\ \cite{hoffmann-jorgensen:1987}, (5.4)).
\begin{corollary}\label{cor:coherence}
  For all situations $s$ and all functions $\xi:\Omega\to\bbbrbar$,
  $\LowerExpect(\xi\givn s)\le\UpperExpect(\xi\givn s)$.
  In particular,
  $\LowerProb(E\givn s)\le\UpperProb(E\givn s)$
  for all events $E\subseteq\Omega$.
\end{corollary}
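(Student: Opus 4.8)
The plan is to obtain this as an immediate consequence of Lemma~\ref{lem:referee-3}: once we know that $\EEE:=\UpperExpect(\cdot\givn s)$ is an outer probability content, the inequality $\LowerExpect(\xi\givn s)\le\UpperExpect(\xi\givn s)$ is just the statement that an outer probability content dominates the functional $\xi\mapsto-\EEE(-\xi)$ it induces, which is a general fact about order-preserving, subadditive, normalized functionals (it is the reason for the pointer to \cite{hoffmann-jorgensen:1987}, (5.4)).

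Concretely, I would fix a situation $s$ and a function $\xi:\Omega\to\bbbrbar$, and write $\EEE:=\UpperExpect(\cdot\givn s)$. Because we set $\infty+(-\infty):=\infty$, the pointwise sum $\xi+(-\xi)$ is positive (it equals $0$ where $\xi$ is finite and $\infty$ where $\xi$ is infinite), so weak coherence (\ref{eq:weak-coherence}), which holds for $\EEE$ by Lemma~\ref{lem:referee-3}, gives $\EEE(\xi+(-\xi))\ge0$. On the other hand, subadditivity (Axiom~\ref{ax:sum}), again available for $\EEE$ by Lemma~\ref{lem:referee-3}, gives $\EEE(\xi+(-\xi))\le\EEE(\xi)+\EEE(-\xi)$. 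Combining the two yields $\EEE(\xi)+\EEE(-\xi)\ge0$, i.e.\ $-\EEE(-\xi)\le\EEE(\xi)$; a short case check on the values $\pm\infty$ (using the same convention) shows this rearrangement is legitimate in $\bbbrbar$. By the definition (\ref{eq:lower-expectation}) of lower expectation, the left-hand side is $\LowerExpect(\xi\givn s)$, so $\LowerExpect(\xi\givn s)\le\UpperExpect(\xi\givn s)$. The ``in particular'' clause then follows by specializing $\xi:=\III_E$ and recalling the definitions (\ref{eq:probability}) of $\UpperProb(E\givn s)$ and $\LowerProb(E\givn s)$.

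There is no real obstacle here; the only point needing a moment's care is the bookkeeping around $\infty+(-\infty)=\infty$, which is exactly what makes $\xi+(-\xi)$ positive rather than undefined, and the parallel bookkeeping when rearranging $\EEE(\xi)+\EEE(-\xi)\ge0$ into $-\EEE(-\xi)\le\EEE(\xi)$. (Alternatively, one could bypass Lemma~\ref{lem:referee-3} and argue directly from the definitions: given bounded-below supermartingales $\SSS,\TTT$ with $\liminf_{n\to\infty}\SSS(\omega^n)\ge\xi(\omega)$ and $\liminf_{n\to\infty}\TTT(\omega^n)\ge-\xi(\omega)$ for all $\omega\in\Gamma(s)$, the supermartingale $\SSS+\TTT$ is bounded below and satisfies $\liminf_{n\to\infty}(\SSS+\TTT)(\omega^n)\ge0$ on $\Gamma(s)$, so Lemma~\ref{lem:referee-1} gives $\SSS(s)+\TTT(s)=(\SSS+\TTT)(s)\ge0$; taking infima over such $\SSS$ and $\TTT$ yields $\UpperExpect(\xi\givn s)+\UpperExpect(-\xi\givn s)\ge0$, which is the same inequality as above.)
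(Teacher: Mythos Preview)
Your proof is correct and follows essentially the same route as the paper: both use Lemma~\ref{lem:referee-3} to apply subadditivity (Axiom~\ref{ax:sum}) and normalization (Axiom~\ref{ax:norm}) to $\UpperExpect(\cdot\givn s)$, obtaining $\UpperExpect(\xi\givn s)+\UpperExpect(-\xi\givn s)\ge0$. The paper phrases this as a one-line contradiction argument (writing $\UpperExpect(0\givn s)<0$, which tacitly uses $\xi+(-\xi)\ge0$), while you give the direct version and are more explicit about the $\infty+(-\infty)=\infty$ bookkeeping; your parenthetical alternative via Lemma~\ref{lem:referee-1} is also sound but not needed.
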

\begin{proof}
  Suppose $\LowerExpect(\xi\givn s)>\UpperExpect(\xi\givn s)$,
  i.e.,
  $\UpperExpect(\xi\givn s)+\UpperExpect(-\xi\givn s)<0$.
  By Axiom~\ref{ax:sum} applied to $\UpperExpect(\cdot\givn s)$
  this implies $\UpperExpect(0\givn s)<0$,
  which contradicts Axiom~\ref{ax:norm} for $\UpperExpect(\cdot\givn s)$.
\end{proof}

Important special cases are where $s=\Box$
(unconditional upper and lower expectations and probabilities).
We set
$\UpperExpect(\xi):=\UpperExpect(\xi\givn\Box)$,
$\LowerExpect(\xi):=\LowerExpect(\xi\givn\Box)$,
$\UpperProb(E):=\UpperProb(E\givn\Box)$,
and $\LowerProb(E):=\LowerProb(E\givn\Box)$.
We say that an event $E$ is \emph{almost certain},
or happens \emph{almost surely} (a.s.),
if $\LowerProb(E)=1$;
in this case we will also say that $E$,
considered as a property of $\omega\in\Omega$,
holds for \emph{almost all} $\omega$.
More generally, we say that $E$ holds \emph{almost surely on $B$}
(or \emph{for almost all $\omega\in B$}),
for another event $B$,
if the event $(B\Rightarrow E):=(B^c\cup E)$ is almost certain.
An event $E$ is \emph{almost impossible}, or \emph{null},
if $\UpperProb(E)=0$.

In \cite{shafer/vovk:2001} we defined the lower probability of an event $E$
as $1-\UpperProb(E^c)$.
The following lemma says that this definition
is equivalent to our current definition.
\begin{lemma}
  For each event $E\subseteq\Omega$ and each situation $s$,
  \begin{equation*}
    \LowerProb(E\givn s)
    =
    1-\UpperProb(E^c\givn s).
  \end{equation*}
\end{lemma}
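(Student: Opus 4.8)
The plan is to unwind the definitions on both sides and exploit the fact that $\III_{E^c} = 1 - \III_E$ together with property~\eqref{eq:constant} (namely $\EEE(f+c)=\EEE(f)+c$), which holds for $\UpperExpect(\cdot\givn s)$ by Lemma~\ref{lem:referee-3}. By definition \eqref{eq:lower-expectation}, $\LowerProb(E\givn s) = \LowerExpect(\III_E\givn s) = -\UpperExpect(-\III_E\givn s)$, so what must be shown is $\UpperExpect(-\III_E\givn s) = \UpperProb(E^c\givn s) - 1 = \UpperExpect(\III_{E^c}\givn s) - 1$. Since $\III_{E^c} = 1 - \III_E = \III_E\cdot(-1) + 1$ pointwise on $\Omega$, the identity \eqref{eq:constant} applied to $\UpperExpect(\cdot\givn s)$ gives $\UpperExpect(\III_{E^c}\givn s) = \UpperExpect(-\III_E\givn s) + 1$, which is exactly the required equality.

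The only subtlety is that \eqref{eq:constant} was derived for outer probability contents $\EEE$ on the outcome space; I would first note that Lemma~\ref{lem:referee-3} states precisely that $\UpperExpect(\cdot\givn s)$ is an outer probability content on $\bbbrbarOmega$, so the general derivation of \eqref{eq:constant} from Axioms~\ref{ax:sum} and~\ref{ax:norm} applies verbatim to it. Thus the argument is: invoke Lemma~\ref{lem:referee-3} to know $\UpperExpect(\cdot\givn s)$ satisfies all four axioms; invoke \eqref{eq:constant} for this functional with $f = -\III_E$ and $c = 1$; rewrite $-\III_E + 1 = \III_{E^c}$; and finally apply the definitions \eqref{eq:lower-expectation}, \eqref{eq:probability} to translate back into probability notation.

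I do not expect any real obstacle here — the statement is essentially a bookkeeping consequence of the additivity-with-constants property \eqref{eq:constant}, which in turn is a consequence of finite subadditivity (Axiom~\ref{ax:sum}) and normalization (Axiom~\ref{ax:norm}). If one wanted to avoid appealing to \eqref{eq:constant} as a black box, one could instead argue directly from the supermartingale definition \eqref{eq:upper-expectation}: given a bounded-below supermartingale $\SSS$ witnessing $\UpperExpect(-\III_E\givn s) \le a$, the shifted process $\SSS + 1$ (which is again a supermartingale by Axiom~\ref{ax:norm}/\eqref{eq:constant} applied to each $\EEE_n$, or simply because adding a constant to a capital process yields a capital process) witnesses $\UpperExpect(\III_{E^c}\givn s) \le a+1$, and symmetrically; but the cleaner route is via Lemma~\ref{lem:referee-3} and \eqref{eq:constant}. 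The main thing to be careful about is the sign juggling in \eqref{eq:lower-expectation}, so I would write out the chain of equalities explicitly rather than leaving it to the reader.
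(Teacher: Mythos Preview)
Your proposal is correct and follows essentially the same approach as the paper: invoke Lemma~\ref{lem:referee-3} to know $\UpperExpect(\cdot\givn s)$ is an outer probability content, apply \eqref{eq:constant} with $c=1$ to pass from $-\III_E$ to $1-\III_E=\III_{E^c}$, and unwind the definitions. The paper's proof is just the explicit chain of equalities you describe in your last sentence.
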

\begin{proof}
  By~(\ref{eq:constant}) and Lemma~\ref{lem:referee-3},
  we have
  $\UpperExpect(\xi+c\givn s)=\UpperExpect(\xi\givn s)+c$
  for all $\xi:\Omega\to\bbbr$ and $c\in\bbbr$.
  Therefore,
  \begin{multline*}
    \LowerProb(E\givn s)
    =
    \LowerExpect(\III_E\givn s)
    =
    -\UpperExpect(-\III_E\givn s)\\
    =
    1-\UpperExpect(1-\III_E\givn s)
    =
    1-\UpperExpect(\III_{E^c}\givn s)
    =
    1-\UpperProb(E^c\givn s).
    \qedhere
  \end{multline*}
\end{proof}

The following lemma will be used
in the proof of Lemma~\ref{lem:referee-4}
stating that upper expectation is a superexpectation functional
in the case where $\EEE_1,\EEE_2,\ldots$ are superexpectation functionals.
\begin{lemma}\label{lem:referee-2}
  The right-hand side of (\ref{eq:upper-expectation})
  will not change when $\Gamma(s)$ is replaced by~$\Omega$.
\end{lemma}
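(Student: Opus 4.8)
The statement to prove is Lemma~\ref{lem:referee-2}: the right-hand side of~(\ref{eq:upper-expectation}) is unchanged when the condition ``$\liminf_{n\to\infty}\SSS(\omega^n)\ge\xi(\omega)$ for all $\omega\in\Gamma(s)$'' is replaced by the same condition quantified over all $\omega\in\Omega$. Since $\Gamma(s)\subseteq\Omega$, requiring the $\liminf$ bound on all of $\Omega$ is formally a stronger constraint on $\SSS$, so the infimum over the larger constraint set can only be $\ge$ the original. Thus one inequality is trivial, and the real content is the reverse: from any bounded-below supermartingale $\SSS$ with $\SSS(s)=a$ and $\liminf_n\SSS(\omega^n)\ge\xi(\omega)$ for all $\omega\in\Gamma(s)$, I need to manufacture another bounded-below supermartingale $\SSS'$ with $\SSS'(s)\le a$ (in fact $\SSS'(s)=a$ will do) and $\liminf_n\SSS'(\omega^n)\ge\xi(\omega)$ for \emph{all} $\omega\in\Omega$, not just those extending $s$.

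The plan is to modify $\SSS$ only off the cone $\Gamma(s)$, leaving it untouched on $\Gamma(s)$ (and on all situations comparable with $s$), so that $\SSS'(s)=\SSS(s)=a$ and the $\liminf$ condition on $\Gamma(s)$ is inherited verbatim. For situations $t$ incomparable with $s$ (i.e.\ $t\not\subseteq s$ and $s\not\subseteq t$), I want to redefine the supermartingale so that along every $\omega\in\Omega\setminus\Gamma(s)$ its $\liminf$ is $+\infty$, which dominates any value of $\xi(\omega)$. The natural device: fix the first index $k=\lvert s\rvert$ at which $\omega$ and $s$ can diverge; for $\omega\notin\Gamma(s)$ there is a least $m\le k$ with $\omega^m\ne s^m$. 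Define $\SSS'(t):=\SSS(t)$ whenever $t$ is comparable with $s$, and for $t$ incomparable with $s$ let $\SSS'(t):=+\infty$. I should check this is a supermartingale: at a situation $u$ with $\lvert u\rvert=n-1$, the required inequality is $\EEE_n(\SSS'(u\,\cdot))\le\SSS'(u)$. If $u$ is incomparable with $s$, then $\SSS'(u)=+\infty$ and there is nothing to prove (using Axiom~\ref{ax:order} and $\EEE_n(+\infty)\le+\infty$). If $u\subsetneq s$, then $u\,\cdot$ takes the value $\SSS(ux)$ at the single $x$ with $ux\subseteq s$ and $+\infty$ at all other $x$; since $\SSS'(u\,\cdot)\ge\SSS(u\,\cdot)$ pointwise, Axiom~\ref{ax:order} gives $\EEE_n(\SSS'(u\,\cdot))\ge\EEE_n(\SSS(u\,\cdot))$ --- wrong direction. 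So a pure ``$+\infty$ off the cone'' patch needs care: I should instead only inflate to $+\infty$ \emph{strictly after} the divergence, i.e.\ keep $\SSS'(t)=\SSS(t)$ for $t\subseteq s$, set $\SSS'(t)=+\infty$ once $t$ has left the cone $\bigcup_{t'\subseteq s}\{t'\}\cup\Gamma(s)$, and at the branch situations $u\subsetneq s$ (where $\SSS'(u)=\SSS(u)$) the children are $\SSS(ux)$ on the branch toward $s$ and $+\infty$ off it; then $\EEE_n(\SSS'(u\,\cdot))\ge\EEE_n(\SSS(u\,\cdot))$ again clashes with $\SSS'(u)=\SSS(u)$.

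The fix for this obstacle --- which I expect to be the main one --- is to pay for the escape to $+\infty$ by \emph{lowering} $\SSS$ on the spine $t\subseteq s$. Since $\XXX$ may be infinite and $\EEE_n$ merely subadditive, I cannot split capital across infinitely many children for free, so instead I argue as follows. The function $\SSS$ restricted to $\Gamma(s)$ and the spine is genuinely a supermartingale ``started at $s$'': for $u\subsetneq s$ the constraint $\EEE_n(\SSS(u\,\cdot))\le\SSS(u)$ already holds, and in particular, by~(\ref{eq:coherence}) or by Axiom~\ref{ax:norm}, $\inf_x\SSS(ux)\le\SSS(u)$, so I may legitimately redefine $\SSS'(u\,\cdot)$ to agree with $\SSS$ on the single child toward $s$ and to equal $\SSS(u)$ (a constant, not $+\infty$) off it --- this uses Axiom~\ref{ax:norm} ($\EEE_n(\text{const }c)=c$) together with Axiom~\ref{ax:order} to verify $\EEE_n(\SSS'(u\,\cdot))\le\max(\SSS(u), \EEE_n(\SSS(u\,\cdot)))=\SSS(u)$, hence $\SSS'$ is a supermartingale, bounded below by $\min(\inf\SSS, \text{spine values})$, with $\SSS'(s)=\SSS(s)=a$. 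But this only freezes $\SSS'$ at $\SSS(u)$ off the spine, not at $+\infty$, which does \emph{not} beat $\xi$. The clean resolution is to iterate over the countably many spine-branch points $u_0=\Box\subsetneq u_1\subsetneq\cdots\subsetneq u_{k-1}=s$: build a bounded-below supermartingale that, on the branch leaving $u_j$ away from $s$, behaves like a supermartingale ``restarted'' so as to race to $+\infty$; this is possible because on that subtree $\Gamma(u_jx)$ ($u_jx\not\subseteq s$) we impose no $\xi$-constraint at all, so we may take any bounded-below supermartingale there whose $\liminf$ is $+\infty$ along every path --- for instance the constant supermartingale equal to a large number, and then glue by summing over $j$ with small weights and invoking Axiom~\ref{ax:countable} (or, to avoid Axiom~\ref{ax:countable} as the paper insists, a finite sum, since there are only $k=\lvert s\rvert$ branch points). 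Adding to $\SSS$ a finite supermartingale supported off $\Gamma(s)$ that is $+\infty$ there and $0$ on the spine preserves $\SSS'(s)=a$, keeps $\SSS'$ bounded below, and forces $\liminf_n\SSS'(\omega^n)=+\infty\ge\xi(\omega)$ for every $\omega\in\Omega\setminus\Gamma(s)$, while on $\Gamma(s)$ the added part is eventually $0$ so the original $\liminf$ bound survives. This yields an admissible $\SSS'$ for the ``$\Omega$'' version of~(\ref{eq:upper-expectation}) with the same value $a$, proving the infima are equal.
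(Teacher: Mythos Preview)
Your reduction is right: one inequality is immediate, and for the other you must, given a bounded-below supermartingale $\SSS$ with $\SSS(s)=a$ and $\liminf_n\SSS(\omega^n)\ge\xi(\omega)$ for $\omega\in\Gamma(s)$, produce a bounded-below supermartingale $\SSS'$ with $\SSS'(s)=a$ and the same $\liminf$ bound for \emph{all} $\omega\in\Omega$. You also correctly locate the obstacle with the naive patch ``$\SSS'=\SSS$ on the spine and on $\Gamma(s)$, $\SSS'=\infty$ elsewhere'': at a spine node $u\subsetneq s$ the children include off-spine points where $\SSS'=\infty$, so $\EEE_n(\SSS'(u\,\cdot))$ may well be $\infty>\SSS(u)=\SSS'(u)$.

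The gap is in your proposed repair. You claim to add to $\SSS$ ``a finite supermartingale supported off $\Gamma(s)$ that is $+\infty$ there and $0$ on the spine''. No such supermartingale exists in general: if $T$ vanishes at a spine node $u$ but equals $+\infty$ at some child $ux$, then $\EEE_n(T(u\,\cdot))$ need not be $\le 0$ (nothing in Axioms~1--4 forces this; think of $\EEE_n=\sup$). Your suggested building block, ``the constant supermartingale equal to a large number'', does not have $\liminf=+\infty$ along every path, so it cannot dominate an unbounded $\xi$ off $\Gamma(s)$; and by Lemma~\ref{lem:referee-1}, any bounded-below supermartingale whose $\liminf$ is $+\infty$ on all of $\Gamma(ux)$ must already take the value $+\infty$ at $ux$, which brings you straight back to the obstacle. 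Summing finitely many such pieces does not help: the supermartingale inequality still fails at the spine nodes where the value is finite but a child is $+\infty$.

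The fix is much simpler than the route you are attempting, and it is exactly what the paper does: do \emph{not} insist on keeping $\SSS'$ equal to $\SSS$ along the spine below $s$. Define
\[
  \SSS'(t):=\begin{cases}\SSS(t)&\text{if }s\subseteq t,\\ \infty&\text{otherwise.}\end{cases}
\]
Then $\SSS'(s)=\SSS(s)=a$; for any situation $u$ with $s\not\subseteq u$ (this includes every proper prefix of $s$) we have $\SSS'(u)=\infty$ and the supermartingale inequality is vacuous; for $u$ with $s\subseteq u$ every child $ux$ also satisfies $s\subseteq ux$, so the inequality is inherited from $\SSS$. The process $\SSS'$ is bounded below by $\inf\SSS$. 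For $\omega\notin\Gamma(s)$ one has $s\not\subseteq\omega^n$ for every $n$, hence $\SSS'(\omega^n)=\infty$ and $\liminf_n\SSS'(\omega^n)=\infty\ge\xi(\omega)$; for $\omega\in\Gamma(s)$ the values agree with $\SSS$ from level $|s|$ on, so the $\liminf$ bound carries over. The whole difficulty you wrestle with disappears once you allow $\SSS'=\infty$ on the spine below $s$ rather than trying to preserve the original finite values there.
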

\begin{proof}
  It suffices to prove that if a bounded below supermartingale $\SSS$
  satisfies $\SSS(s)<r$
  and $\liminf_{n\to\infty}\SSS(\omega^n)\ge\xi(\omega)$ for all $\omega\in\Gamma(s)$,
  then there exists another bounded below supermartingale $\SSS'$
  that satisfies $\SSS'(s)<r$
  and $\liminf_{n\to\infty}\SSS'(\omega^n)\ge\xi(\omega)$ for all $\omega\in\Omega$.
  Such an $\SSS'$ can be defined by
  $$
    \SSS'(t)
    :=
    \begin{cases}
      \SSS(t) & \text{if $s\subseteq t$}\\
      \infty & \text{otherwise}.
    \end{cases}
    \qedhere
  $$
\end{proof}
\begin{lemma}\label{lem:referee-4}
  Suppose $\EEE_1,\EEE_2,\ldots$ are superexpectations,
  and let $s$ be a situation.
  Then $\UpperExpect(\cdot\givn s)$ is also a superexpectation.
  In particular, for any sequence of events $E_1,E_2,\ldots$,
  it is true that
  \begin{equation*}
    \UpperProb
    \left(
      \bigcup_{k=1}^{\infty}
      E_k
    \right)
    \le
    \sum_{k=1}^{\infty}
    \UpperProb
    \left(
      E_k
    \right).
  \end{equation*}
  In particular,
  the union of a sequence of null events is null.
\end{lemma}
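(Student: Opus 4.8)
The plan is to verify Axiom~\ref{ax:countable} for the outer probability content $\UpperExpect(\cdot\givn s)$ (it is one by Lemma~\ref{lem:referee-3}); this is exactly what the assertion ``$\UpperExpect(\cdot\givn s)$ is a superexpectation'' means. The two ``in particular'' claims will then be immediate: taking $\xi_k:=\III_{E_k}$, the pointwise bound $\III_{\bigcup_k E_k}\le\sum_k\III_{E_k}$ together with Axiom~\ref{ax:order} for $\UpperExpect$ gives $\UpperProb(\bigcup_k E_k)\le\sum_k\UpperProb(E_k)$, and if each $E_k$ is null the right-hand side vanishes, while $\UpperProb(\bigcup_k E_k)\ge0$ again by Axioms~\ref{ax:order} and~\ref{ax:norm}.

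So let positive $\xi_1,\xi_2,\ldots\in\bbbrbarOmega$ and a situation $s$ be given, and write $a_k:=\UpperExpect(\xi_k\givn s)$. Since $\xi_k\ge0$, Axioms~\ref{ax:order} and~\ref{ax:norm} for $\UpperExpect(\cdot\givn s)$ force $a_k\ge0$; and we may assume $\sum_k a_k<\infty$ (otherwise there is nothing to prove), so each $a_k\in[0,\infty)$. Fix $\epsilon>0$ and positive reals $\epsilon_k$ with $\sum_k\epsilon_k=\epsilon$, and for each $k$ pick, from the definition~(\ref{eq:upper-expectation}), a bounded below supermartingale $\SSS_k$ with $\SSS_k(s)<a_k+\epsilon_k$ and $\liminf_{n\to\infty}\SSS_k(\omega^n)\ge\xi_k(\omega)$ for all $\omega\in\Gamma(s)$; by the construction in the proof of Lemma~\ref{lem:referee-2} we may also take $\SSS_k(t)=\infty$ whenever $s\not\subseteq t$.

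The heart of the argument is to replace each $\SSS_k$ by a \emph{positive} supermartingale $\TTT_k$ with $\TTT_k(s)=a_k+\epsilon_k$, so that the sum $\sum_k\TTT_k$ is well defined and, by Axiom~\ref{ax:countable} for $\EEE_1,\EEE_2,\ldots$, is again a supermartingale. I would set $\TTT_k:=\SSS_k+c_k$ with $c_k:=a_k+\epsilon_k-\SSS_k(s)$; since $\SSS_k$ is bounded below and $\SSS_k(s)<a_k+\epsilon_k<\infty$, this $c_k$ lies in $(0,\infty)$, and adding a real constant preserves the supermartingale property by~(\ref{eq:constant}). Then $\TTT_k(s)=a_k+\epsilon_k$, $\TTT_k\equiv\infty$ off the subtree rooted at $s$, and $\liminf_n\TTT_k(\omega^n)\ge\xi_k(\omega)+c_k\ge0$ for $\omega\in\Gamma(s)$; applying Lemma~\ref{lem:referee-1} at each situation $t\supseteq s$ gives $\TTT_k(t)\ge\inf_{\omega\in\Gamma(t)}\limsup_n\TTT_k(\omega^n)\ge0$, so $\TTT_k\ge0$ everywhere. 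Put $\SSS:=\sum_k\TTT_k$, a positive (hence bounded below) supermartingale with $\SSS(s)=\sum_k(a_k+\epsilon_k)=\sum_k a_k+\epsilon$; for each $\omega\in\Gamma(s)$, superadditivity of $\liminf$ over finite partial sums gives $\liminf_n\SSS(\omega^n)\ge\sum_{k=1}^N\xi_k(\omega)$ for every $N$, hence $\liminf_n\SSS(\omega^n)\ge\sum_{k=1}^\infty\xi_k(\omega)$. Thus $\SSS$ is admissible in~(\ref{eq:upper-expectation}) for $\sum_k\xi_k$, so $\UpperExpect(\sum_k\xi_k\givn s)\le\sum_k a_k+\epsilon$, and letting $\epsilon\downarrow0$ finishes the proof.

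I expect the only genuine obstacle to be the positivity reduction in the third paragraph: Axiom~\ref{ax:countable} for the $\EEE_n$, and indeed the mere well-definedness of $\sum_k\TTT_k$, require the summands to be positive, whereas the near-optimal $\SSS_k$ produced by~(\ref{eq:upper-expectation}) are only bounded below and may be negative at intermediate situations. The fix---shift by the constant $c_k$ and use Lemma~\ref{lem:referee-1} to propagate positivity of the asymptotic values down the subtree below $s$, together with the $\infty$-extension of Lemma~\ref{lem:referee-2} outside it---works precisely because $\liminf_n\SSS_k(\omega^n)\ge\xi_k(\omega)\ge0$ keeps $c_k$ in $(0,\infty)$ and costs nothing beyond the $\epsilon_k$ already budgeted. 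The remaining points (superadditivity of $\liminf$ for finite sums, that adding a constant or taking the $\infty$-extension preserves ``supermartingale'', that $a_k$ cannot be $-\infty$) are routine.
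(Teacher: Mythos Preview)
Your proof is correct and follows essentially the same route as the paper's: extend each near-optimal $\SSS_k$ by $\infty$ off the subtree at $s$ (Lemma~\ref{lem:referee-2}), observe positivity via Lemma~\ref{lem:referee-1}, sum using Axiom~\ref{ax:countable} for the $\EEE_n$, and pass the $\liminf$ inside the sum. The one superfluous step is the shift by $c_k$: the same Lemma~\ref{lem:referee-1} argument you give for $\TTT_k$ already shows that $\SSS_k$ itself is positive (since $\liminf_n\SSS_k(\omega^n)\ge\xi_k(\omega)\ge0$ for every $\omega$ after the $\infty$-extension), so the paper simply sums the $\SSS_k$ directly and invokes Fatou's lemma in place of your finite-truncation argument.
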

\begin{proof}
  In view of Lemma~\ref{lem:referee-3},
  our goal is to prove
  \begin{equation*} 
    \UpperExpect
    \left(
      \sum_{k=1}^{\infty}
      \xi_k
      \Bigm|
      s
    \right)
    \le
    \sum_{k=1}^{\infty}
    \UpperExpect
    \left(
      \xi_k \givn s
    \right),
  \end{equation*}
  where $\xi_1,\xi_2,\ldots$ are positive functions in $\bbbrbarOmega$.
  Let $\epsilon>0$ be arbitrarily small.
  For each $k\in\bbbn$ choose a supermartingale $\SSS_k$
  (automatically positive, by Lemma~\ref{lem:referee-1})
  such that
  $\liminf_n\SSS_k(\omega^n)\ge\xi_k(\omega)$
  for all $\omega\in\Omega$
  (cf.\ Lemma~\ref{lem:referee-2})
  and $\SSS_k(s)\le\UpperExpect(\xi_k\givn s)+\epsilon/2^k$.
  Since all $\EEE_n$ are superexpectations,
  the sum $\SSS:=\sum_{k=1}^{\infty}\SSS_k$
  will be a supermartingale
  (cf.\ (\ref{eq:subadditivity}));
  this supermartingale will satisfy
  $\SSS(s)\le\sum_{k=1}^{\infty}\UpperExpect(\xi_k\givn s)+\epsilon$
  and, by Fatou's lemma,
  $$
    \liminf_n\SSS(\omega^n)
    =
    \liminf_n
    \sum_{k=1}^{\infty}\SSS_k(\omega^n)
    \ge
    \sum_{k=1}^{\infty}
    \liminf_n\SSS_k(\omega^n)
    \ge
    \sum_{k=1}^{\infty}
    \xi_k(\omega)
  $$
  for all $\omega\in\Omega$.
  Therefore,
  \begin{equation*}
    \UpperExpect
    \left(
      \sum_{k=1}^{\infty}
      \xi_k
      \Bigm|
      s
    \right)
    \le
    \SSS(s)
    \le
    \sum_{k=1}^{\infty}
    \UpperExpect
    \left(
      \xi_k \givn s
    \right)
    +
    \epsilon.
  \end{equation*}
  It remains to remember that $\epsilon$ can be taken arbitrarily small.
\end{proof}

\ifFULL\bluebegin
\subsection*{Plans for the future}

In the basic prediction protocol and the statement of L\'evy's zero-one law:
assume $\EEE_1=\EEE_2=\cdots$.
To recover the case of different $\EEE_n$,
consider relative upper expectation $\UpperExpect_A(\xi\givn s)$
instead of $\UpperExpect(\xi\givn s)$, where $A\subseteq\Omega$
(as in \cite{GTP28arXiv}).
Using relative upper expectation might help us generalize
the game-theoretic version of B\'artfai and R\'ev\'esz's zero-one law.
For general $A$,
new interesting phenomena might appear,
such as ``leakage'' (see \cite{shafer/etal:2010BEATCS}
and the correspondence in the directory
\verb"C:\Doc\Work\R\Papers\GTP\BEATCS\Leakage").
In particular, to prevent leakage we have to restrict our attention
to bounded below supermartingales.

Explain that Protocol~1 with $\EEE_1=\EEE_2=\cdots$
is sufficient to cover Theorems~\ref{thm:expectation} and~\ref{thm:levy}
for all other discrete-time protocols that we have ever considered.
\blueend\fi

\section{Equivalent definitions of game-theoretic expectation and probability}
\label{sec:equivalent}

The following proposition,
which is our main statement of equivalence,
gives two equivalent definitions of upper game-theoretic expectation.
\begin{theorem}\label{thm:expectation}
  For all $\xi:\Omega\to\bbbrbar$ and all situations $s$,
  \begin{equation*} 
    \UpperExpect(\xi\givn s)
    =
    \inf
    \Bigl\{
      \SSS(s)
      \bigm|
      \limsup_{n\to\infty} \SSS(\omega^n)
      \ge
      \xi(\omega)
      \text{ for all $\omega\in\Gamma(s)$}
    \Bigr\}
  \end{equation*}
  (i.e., on the right-hand side of (\ref{eq:upper-expectation}),
  we can replace $\liminf$ by $\limsup$),
  where $\SSS$ ranges over the supermartingales that are bounded below,
  and
  \begin{equation*} 
    \UpperExpect(\xi\givn s)
    =
    \inf
    \left\{
      \SSS(s)
      \bigm|
      \forall\omega\in\Gamma(s):
      \lim_{n\to\infty}
      \SSS(\omega^n)
      \ge
      \xi(\omega)
    \right\}
  \end{equation*}
  where $\SSS$ ranges over the class $\mathbf{L}$
  of all bounded below supermartingales for which
  $\lim_{n\to\infty}\SSS(\omega^n)$
  exists in $(-\infty,\infty]$ for all $\omega\in\Omega$.
\end{theorem}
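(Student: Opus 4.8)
The plan is to prove the two displayed equalities in the obvious order, by showing each side dominates the other. Denote by $U_{\liminf}$ the right-hand side of (\ref{eq:upper-expectation}) — which is $\UpperExpect(\xi\givn s)$ by definition — by $U_{\limsup}$ the first infimum in the theorem, and by $U_{\lim}$ the infimum over the class $\mathbf{L}$. Since for every supermartingale $\SSS$ we have $\liminf_n\SSS(\omega^n)\le\limsup_n\SSS(\omega^n)$, any $\SSS$ admissible in the definition of $U_{\liminf}$ (i.e., with $\liminf_n\SSS(\omega^n)\ge\xi(\omega)$ on $\Gamma(s)$) is a fortiori admissible for $U_{\limsup}$; hence $U_{\limsup}\le U_{\liminf}$. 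Likewise, any $\SSS\in\mathbf{L}$ with $\lim_n\SSS(\omega^n)\ge\xi(\omega)$ on $\Gamma(s)$ satisfies $\limsup_n\SSS(\omega^n)=\lim_n\SSS(\omega^n)\ge\xi(\omega)$, so $U_{\limsup}\le U_{\lim}$. It therefore remains to prove $U_{\lim}\le U_{\liminf}$, and the whole chain collapses: $U_{\liminf}=U_{\limsup}=U_{\lim}$.

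The heart of the argument is thus: given a bounded below supermartingale $\SSS$ with $\SSS(s)<r$ and $\liminf_n\SSS(\omega^n)\ge\xi(\omega)$ for all $\omega\in\Gamma(s)$, I must produce $\SSS'\in\mathbf{L}$ with $\SSS'(s)<r$ and $\lim_n\SSS'(\omega^n)\ge\xi(\omega)$ for all $\omega\in\Gamma(s)$. First, by Lemma~\ref{lem:referee-2}, I may assume the $\liminf$ bound holds on all of $\Omega$, not just $\Gamma(s)$. Without loss of generality assume $\SSS\ge0$ (add a constant, which only shifts $\SSS(s)$ and $r$ by the same amount, then the constant can be absorbed back at the end; here I should be slightly careful and instead work with $\SSS\ge c$ for the actual lower bound $c$ and shift at the very end, to avoid disturbing the strictness $\SSS(s)<r$). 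The idea is a standard ``lock-in'' construction: track, along each play, the running infimum-type quantity $m_n:=\inf_{k\ge ?}$ — more precisely, I want a supermartingale that converges. The cleanest device: define $\TTT_n:=\inf_{k\ge n}\SSS_k$ is not itself a supermartingale, so instead I use the convergence of nonnegative supermartingales that is available here. In fact the right tool is to let $\SSS'$ be the supermartingale obtained by the ``doubling-free'' trick of freezing: one shows that for a nonnegative supermartingale the sequence $\SSS(\omega^n)$ need not converge, but one can dominate $\xi$ by a convergent supermartingale as follows — for each rational $q$, on the event $\{\limsup\SSS(\omega^n)>q>\liminf\SSS(\omega^n)\}$ an upcrossing argument gives a supermartingale growing to infinity; summing $2^{-q}$ times these (legitimate when the $\EEE_n$ are superexpectations, by Axiom~\ref{ax:countable}) and adding it to $\SSS$ yields $\SSS''$ whose $\liminf$ still dominates $\xi$ and for which $\lim_n\SSS''(\omega^n)$ exists in $(-\infty,\infty]$ everywhere, with $\SSS''(s)$ exceeding $\SSS(s)$ by at most $\sum_q 2^{-q}\cdot 0$-ish controlled error — so choose the weights to make the excess less than $r-\SSS(s)$. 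This is exactly Doob's martingale convergence argument transplanted game-theoretically, and it is the step I expect to be the main obstacle, both because the upcrossing supermartingale must be constructed explicitly from the $\EEE_n$ and because one must verify the $\liminf\ge\xi$ property survives the addition.

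Two loose ends must then be handled. First, the case where the $\EEE_n$ are merely outer probability contents, not superexpectations, so Axiom~\ref{ax:countable} is unavailable and the countable sum of upcrossing supermartingales need not be a supermartingale. Following the paper's stated strategy, I would first give the proof under Axiom~\ref{ax:countable} as above, and then remove it by a separate approximation argument — truncating to finitely many rationals $q$ at a time and diagonalizing, or by directly exhibiting, for each $\epsilon>0$, a single supermartingale in $\mathbf{L}$ that $\epsilon$-approximates, which suffices since the infimum defining $U_{\liminf}$ is unchanged under such $\epsilon$-perturbations. Second, I should double-check the degenerate values: if $\UpperExpect(\xi\givn s)=\infty$ there is nothing to prove, and if it is $-\infty$ then by Lemma~\ref{lem:referee-1} some bounded below supermartingale would have to take arbitrarily negative values at $s$, which is consistent, and the construction still goes through since ``bounded below'' is per-supermartingale. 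With these in place, $U_{\lim}\le U_{\liminf}$ follows and the theorem is proved.
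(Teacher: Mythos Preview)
Your upcrossing strategy is essentially the paper's, but the logical framing has a gap. From $U_{\limsup}\le U_{\liminf}$, $U_{\limsup}\le U_{\lim}$, and your target inequality $U_{\lim}\le U_{\liminf}$ you obtain only $U_{\limsup}\le U_{\lim}= U_{\liminf}$; nothing forces $U_{\liminf}\le U_{\limsup}$, so the chain does \emph{not} collapse. What you must prove is the stronger $U_{\lim}\le U_{\limsup}$: start from a bounded below $\SSS$ satisfying merely $\limsup_n\SSS(\omega^n)\ge\xi(\omega)$ on $\Gamma(s)$ and build $\SSS^*\in\mathbf{L}$ with $\SSS^*(s)\le\SSS(s)+\epsilon$ and $\lim_n\SSS^*(\omega^n)\ge\xi(\omega)$. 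This is precisely what the paper does. Fortunately your construction works unchanged from this weaker hypothesis (on paths where $\SSS$ fails to converge the added upcrossing term explodes to $\infty$; where $\SSS$ converges, its limit equals its $\limsup$, which is $\ge\xi$), so the repair is just to swap the starting hypothesis. You also underestimate the work in verifying that $\SSS^*=\SSS+\epsilon\TTT$ actually lies in $\mathbf{L}$: the delicate case is when $\SSS(\omega^n)$ converges in $\bbbr$ but the individual upcrossing pieces $\SSS^i(\omega^n)$ need not, and the paper spends the bulk of its proof on a uniform estimate handling this.

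Your plan for removing Axiom~\ref{ax:countable} does not work: truncating to finitely many rational upcrossing intervals misses oscillation patterns and will not produce a supermartingale in $\mathbf{L}$, and no diagonalization repairs this. The paper's device is specific to the construction. After normalizing $\SSS$ to a positive supermartingale $\SSS'$ with $\SSS'(s)=1$, each upcrossing supermartingale $\SSS^i$ has, at every situation, an increment equal either to $0$ or to the increment of $\SSS'$ itself. Hence the increment of $\TTT=\sum_i 2^{-i}\SSS^i$ at any situation is a scalar in $[0,1]$ times the increment of the single supermartingale $\SSS'$, and the supermartingale inequality for $\TTT$ follows from Axioms~\ref{ax:order}--\ref{ax:norm} alone, with no countable subadditivity needed.
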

\begin{proof}
  Let a bounded below supermartingale $\SSS$ satisfy the inequality
  \begin{equation*}
    \forall\omega\in\Gamma(s_0):
    \limsup_{n\to\infty}
    \SSS(\omega^n)
    \ge
    \xi(\omega)
  \end{equation*}
  (cf.\ (\ref{eq:upper-expectation})) and let $\epsilon\in(0,1)$.
  It suffices to show that there exists $\SSS^*\in\mathbf{L}$ such that
  \begin{equation}\label{eq:star}
    \SSS^*(s_0)\le\SSS(s_0)+\epsilon
    \text{\quad and\quad}
    \forall\omega\in\Gamma(s_0):
    \lim_{n\to\infty}
    \SSS^*(\omega^n)
    \ge
    \xi(\omega).
  \end{equation}
  Without loss of generality
  we assume $\SSS(s_0)<\infty$
  (if $\SSS(s_0)=\infty$, set $\SSS^*(s):=\infty$ for all $s$).
  Setting $\SSS':=(\SSS-C)/(\SSS(s_0)-C)$,
  where $C$ is any constant satisfying $C<\inf\SSS$,
  we obtain a positive supermartingale satisfying $\SSS'(s_0)=1$.
  
  The idea is now to use the standard proof of Doob's convergence theorem
  (see, e.g., \cite{shafer/vovk:2001}, Lemma 4.5).
  But first we need to give more definitions,
  which will also be used in the proof of Theorem~\ref{thm:levy}.

  If $s$ and $t$ are two situations such that $s\subseteq t$,
  we define the ``intervals''
  \begin{align*}
    [s,t] &:= \{u\st s\subseteq u\subseteq t\},&
    [s,t) &:= \{u\st s\subseteq u\subset t\},\\
    (s,t] &:= \{u\st s\subset u\subseteq t\},&
    (s,t) &:= \{u\st s\subset u\subset t\}.
  \end{align*}
  Two situations $s$ and $t$ are said to be \emph{comparable}
  if $s\subseteq t$ or $t\subseteq s$;
  otherwise they are \emph{incomparable}.
  A \emph{cut} is a set of situations that are pairwise incomparable
  (cuts are analogous to stopping times in measure-theoretic probability).
  If $\sigma$ and $\tau$ are two cuts,
  we write $\sigma\le\tau$ to mean
  $
    \forall t\in\tau \,
    \exists s\in\sigma:
    s\subseteq t
  $,
  and we write $\sigma<\tau$ to mean
  $
    \forall t\in\tau \,
    \exists s\in\sigma:
    s\subset t
  $.
  In the case $\sigma\le\tau$, we define the ``time intervals''
  \begin{align*}
    [\sigma,\tau] &:= \{u\st [\Box,u]\cap\sigma\ne\emptyset, [\Box,u)\cap\tau=\emptyset\},\\
    [\sigma,\tau) &:= \{u\st [\Box,u]\cap\sigma\ne\emptyset, [\Box,u]\cap\tau=\emptyset\},\\
    (\sigma,\tau] &:= \{u\st [\Box,u)\cap\sigma\ne\emptyset, [\Box,u)\cap\tau=\emptyset\},\\
    (\sigma,\tau) &:= \{u\st [\Box,u)\cap\sigma\ne\emptyset, [\Box,u]\cap\tau=\emptyset\}.
  \end{align*}
  Notice that for all stopping times $\sigma,\tau,\rho$
  such that $\sigma \le \tau \le \rho$,
  $$
    [\sigma,\tau) \cap [\tau,\rho) = \emptyset
    \text{ and }
    [\sigma,\tau) \cup [\tau,\rho)
    =
    [\sigma,\rho).
  $$

  If $s$ is a situation and $\tau$ is a cut,
  $s^{\tau}$ stands for the unique (when it exists) situation $t\in\tau$
  such that $t\subseteq s$.
  Similarly, if $\omega\in\Omega$ and $\tau$ is a cut,
  $\omega^{\tau}$ stands for the unique (when it exists) situation $t\in\tau$
  that is a prefix of $\omega$.
  (The case where $\omega^{\tau}$ does not exist
  is analogous to the case where a stopping time takes value $\infty$
  in measure-theoretic probability.)
  Notice that our notation $\omega^n$ for $n=0,1,\ldots$
  can be regarded as a special case of the new notation:
  we can interpret the upper index $n$
  as the cut consisting of all situations of length $n$.
  We will also be using the notation $s^n$,
  where $n=0,1,\ldots$ and $s$ is a situation,
  in the same sense.

  \ifFULL\bluebegin
    The reader might wonder why we did not use the more intuitive definitions
    \begin{align*}
      [\sigma,\tau] &:= \{u\st \exists s\in\sigma,t\in\tau: s\subseteq u\subseteq t\},\\
      [\sigma,\tau) &:= \{u\st \exists s\in\sigma,t\in\tau: s\subseteq u\subset t\},\\
      (\sigma,\tau] &:= \{u\st \exists s\in\sigma,t\in\tau: s\subset u\subseteq t\},\\
      (\sigma,\tau) &:= \{u\st \exists s\in\sigma,t\in\tau: s\subset u\subset t\}.
    \end{align*}
    These do not work for $u$ such that $\exists s\in\sigma: s\subseteq u$
    but $u$ is not comparable with any element of $\tau$.
  \blueend\fi

  Let $[a_i,b_i]$, $i=1,2,\ldots$,
  be an enumeration of all intervals with $0\le a_i<b_i<\infty$ and both end-points rational.
  For each $i$ one can define a positive supermartingale $\SSS^i$ with
  $
    \SSS^i(s_0) = 1
  $
  such that $\SSS^i(\omega^n)$ converges to $\infty$ as $n\to\infty$
  when $\liminf_n\SSS'(\omega^n)<a_i$ and $\limsup_n\SSS'(\omega^n)>b_i$.
  The construction of $\SSS^i$ is standard.
  First we define two sequences of cuts
  $\tau^i_0,\tau^i_1,\ldots$ and $\sigma^i_1,\sigma^i_2,\ldots$
  by setting $\tau^i_0:=\{s_0\}$ and, for $k=1,2,\ldots$,
  \begin{align*} 
    \sigma^i_k
    &:=
    \{s \st \SSS'(s)>b_i, \exists t\subset s: t\in\tau^i_{k-1},
      \forall u\in(t,s): \SSS'(u)\le b_i\},\\
    \tau^i_k
    &:=
    \{s \st \SSS'(s)<a_i, \exists t\subset s: t\in\sigma^i_k,
      \forall u\in(t,s): \SSS'(u)\ge a_i\}.
  \end{align*}
  Now we define $\SSS^i$ by the requirement that,
  for all situations $s\supseteq s_0$ and all $x\in\XXX$,
  \begin{equation}\label{eq:P}
    \SSS^i(sx)
    :=
    \begin{cases}
      \SSS^i(s)+\SSS'(sx)-\SSS'(s)
      & \text{if $\SSS^i(s)<\infty$
	and $\exists k: s\in[\tau^i_{k-1},\sigma^i_k)$}\\
      \SSS^i(s)
      & \text{otherwise};
    \end{cases}
  \end{equation}
  in conjunction with $\SSS^i(s_0) = 1$ this determines $\SSS^i$ uniquely
  on the situations $s\supseteq s_0$.
  If $s\not\supseteq s_0$, set $\SSS^i(s):=\infty$.
  We have $\EEE_{n}(\SSS^i(s\,\cdot))\le\SSS^i(s)$,
  where $n:=\lvert s\rvert+1$, in both cases considered in (\ref{eq:P});
  e.g., since $\SSS'$ is a supermartingale,
  $$
    \EEE_{n}(\SSS^i(s\,\cdot))
    =
    \EEE_{n}(\SSS^i(s)+\SSS'(s\,\cdot)-\SSS'(s))
    \le
    \SSS^i(s)
  $$
  when $\SSS^i(s)<\infty$ and $\exists k: s\in[\tau^i_{k-1},\sigma^i_k)$.

  Let us check that each supermartingale $\SSS^i$ is positive.
  There are three (overlapping) cases:
  \begin{itemize}
  \item
    If $s\in[\tau^i_0,\sigma^i_1]$,
    $$
      \SSS^i(s) \ge \SSS'(s) \ge 0
    $$
    (we write $\SSS^i(s) \ge \SSS'(s)$ rather than $\SSS^i(s) = \SSS'(s)$
    because of the possibility that $\SSS'(s)<\infty$
    but $\SSS'(t)=\infty$ for some $t\subset s$).
  \item
    If $s\in[\sigma^i_k,\tau^i_k]$ for some $k=1,2,\ldots$,
    \begin{multline}\label{eq:frozen}
      \SSS^i(s)
      \ge
      1
      +
      \left(
        \SSS'(s^{\sigma^i_1}) - \SSS'(s_0)
      \right)
      +
      \left(
        \SSS'(s^{\sigma^i_2}) - \SSS'(s^{\tau^i_1})
      \right)\\
      +\cdots+
      \left(
        \SSS'(s^{\sigma^i_k}) - \SSS'(s^{\tau^i_{k-1}})
      \right)
      \ge
      b_i + (k-1)(b_i-a_i)
      \ge
      0.
    \end{multline}
  \item
    If $s\in[\tau^i_k,\sigma^i_{k+1}]$ for some $k=1,2,\ldots$,
    \begin{multline}\label{eq:moving}
      \SSS^i(s)
      \ge
      1
      +
      \left(
        \SSS'(s^{\sigma^i_1}) - \SSS'(s_0)
      \right)
      +
      \left(
        \SSS'(s^{\sigma^i_2}) - \SSS'(s^{\tau^i_1})
      \right)\\
      +\cdots+
      \left(
        \SSS'(s^{\sigma^i_k}) - \SSS'(s^{\tau^i_{k-1}})
      \right)
      +
      \left(
        \SSS'(s) - \SSS'(s^{\tau^i_k})
      \right)\\
      \ge
      b_i + (k-1)(b_i-a_i) + \SSS'(s) - a_i
      \ge
      k(b_i-a_i)
      \ge
      0.
    \end{multline}
  \end{itemize}
  Equations~(\ref{eq:frozen}) and~(\ref{eq:moving}) also show
  that $\SSS^i(\omega^n)$ indeed converges to $\infty$ as $n\to\infty$
  whenever $\liminf_n\SSS'(\omega^n)<a_i$ and $\limsup_n\SSS'(\omega^n)>b_i$
  for $\omega\in\Gamma(s_0)$.

  Now we can set
  \begin{equation}\label{eq:T}
    \TTT
    :=
    \sum_{i=1}^{\infty}
    2^{-i}
    \SSS^i
  \end{equation}
  and $\SSS^*:=\SSS+\epsilon\TTT$.
  Assume, for a moment, that $\EEE_1,\EEE_2,\ldots$ are superexpectations.
  In this case $\TTT$, being a countable sum of positive supermartingales,
  is a positive supermartingale itself.
  Being a sum of two supermartingales,
  $\SSS^*$ is a supermartingale itself.

  Let us check that $\SSS^*\in\mathbf{L}$;
  this will imply the second inequality in~(\ref{eq:star})
  (the first inequality holds by the definition of $\SSS^*$).
  Since $\SSS^*$ is bounded below,
  we are only required to check that $\SSS^*(\omega^n)$ converges in $(-\infty,\infty]$
  as $n\to\infty$ for all $\omega\in\Omega$.
  Fix $\omega\in\Omega$.

  If $\SSS(\omega^n)=\infty$ for some $n$,
  there exists $i$ such that $\SSS^i(\omega^n)=\infty$ from some $n$ on
  (take any $i$ such that $a_i=0$ and $b_i>\max_{k<n}\SSS'(\omega^k)$,
  where $n$ is the smallest number such that $\SSS(\omega^n)=\infty$),
  and so we have $\TTT(\omega^n)=\infty$
  and $\SSS^*(\omega^n)=\infty$ from some $n$ on.
  Therefore, we will assume that $\SSS(\omega^n)<\infty$ for all $n$.

  If $\SSS(\omega^n)$ converges to $\infty$,
  $\SSS^*(\omega^n)$ also converges to $\infty$.
  If $\SSS(\omega^n)$ (and, therefore, $\SSS'(\omega^n)$)
  does not converge in $(-\infty,\infty]$,
  there exists $i$ such that $\SSS^i(\omega^n)\to\infty$
  (take any $i$ satisfying
  $\liminf_n\SSS'(\omega^n)<a_i<b_i<\limsup_n\SSS'(\omega^n)$),
  and so we have $\TTT(\omega^n)\to\infty$
  and $\SSS^*(\omega^n)\to\infty$.
  It remains to consider the case where $\SSS(\omega^n)$
  converges in $\bbbr$.

  Suppose $\SSS(\omega^n)$ and, therefore, $\SSS'(\omega^n)$
  converge in $\bbbr$
  but $\SSS^*(\omega^n)$ does not converge in $(-\infty,\infty]$.
  Choose a non-empty interval $(a,b)\subseteq\bbbr$
  such that
  $\liminf_n\SSS^*(\omega^n)<a<b<\limsup_n\SSS^*(\omega^n)$
  and set $c:=b-a$.
  Take any $N\in\bbbn$ such that $\SSS^*(\omega^N)>b$
  and $\lvert\SSS(\omega^n)-\SSS(\omega^m)\rvert<c/2$,
  $\lvert\SSS'(\omega^n)-\SSS'(\omega^m)\rvert<c/4$ for all $n,m\ge N$.
  Since $\SSS'(\omega^n)-\SSS'(\omega^m)>-c/4$ for all $n,m\ge N$,
  we will have
  \begin{equation}\label{eq:pedantic}
    \SSS^i(\omega^n)-\SSS^i(\omega^N)>-c/2
  \end{equation}
  for all $i$ and all $n\ge N$.
  Indeed, there are five cases (overlapping):
  \begin{itemize}
  \item 
    If $\omega^N\in[\sigma^i_l,\tau^i_l]$ for some $l=1,2,\ldots$
    and $\omega^n\in[\sigma^i_k,\tau^i_k]$ for some $k=l,l+1,\ldots$:
    \begin{multline*}
      \SSS^i(\omega^n) - \SSS^i(\omega^N)
      =
      \left(
        \SSS'(\omega^{\sigma^i_{l+1}}) - \SSS'(\omega^{\tau^i_l})
      \right)
      +
      \left(
        \SSS'(\omega^{\sigma^i_{l+2}}) - \SSS'(\omega^{\tau^i_{l+1}})
      \right)\\
      +\cdots+
      \left(
        \SSS'(\omega^{\sigma^i_k}) - \SSS'(\omega^{\tau^i_{k-1}})
      \right)
      \ge
      (k-l)(b_i-a_i)
      \ge
      0.
    \end{multline*}
  \item 
    If $\omega^N\in[\sigma^i_l,\tau^i_l]$ for some $l=1,2,\ldots$
    and $\omega^n\in[\tau^i_k,\sigma^i_{k+1}]$ for some $k=l,l+1,\ldots$:
    \begin{multline*}
      \SSS^i(\omega^n) - \SSS^i(\omega^N)
      =
      \left(
        \SSS'(\omega^{\sigma^i_{l+1}}) - \SSS'(\omega^{\tau^i_l})
      \right)
      +
      \left(
        \SSS'(\omega^{\sigma^i_{l+2}}) - \SSS'(\omega^{\tau^i_{l+1}})
      \right)\\
      +\cdots+
      \left(
        \SSS'(\omega^{\sigma^i_k}) - \SSS'(\omega^{\tau^i_{k-1}})
      \right)
      +
      \left(
        \SSS'(\omega^n) - \SSS'(\omega^{\tau^i_k})
      \right)\\
      >
      (k-l)(b_i-a_i) - c/4
      \ge
      -c/4.
    \end{multline*}
  \item 
    If $\omega^N\in[\tau^i_{l-1},\sigma^i_l]$ for some $l=1,2,\ldots$
    and $\omega^n\in[\sigma^i_k,\tau^i_k]$ for some $k=l,l+1,\ldots$:
    \begin{multline*}
      \SSS^i(\omega^n) - \SSS^i(\omega^N)
      =
      \left(
        \SSS'(\omega^{\sigma^i_l}) - \SSS'(\omega^N)
      \right)
      +
      \left(
        \SSS'(\omega^{\sigma^i_{l+1}}) - \SSS'(\omega^{\tau^i_l})
      \right)
      +\\
      \left(
        \SSS'(\omega^{\sigma^i_{l+2}}) - \SSS'(\omega^{\tau^i_{l+1}})
      \right)
      +\cdots+
      \left(
        \SSS'(\omega^{\sigma^i_k}) - \SSS'(\omega^{\tau^i_{k-1}})
      \right)\\
      >
      -c/4 + (k-l)(b_i-a_i)
      \ge
      -c/4.
    \end{multline*}
  \item 
    If $\omega^N\in[\tau^i_{l-1},\sigma^i_l]$ for some $l=1,2,\ldots$
    and $\omega^n\in[\tau^i_k,\sigma^i_{k+1}]$ for some $k=l,l+1,\ldots$:
    \begin{multline*}
      \SSS^i(\omega^n) - \SSS^i(\omega^N)
      =
      \left(
        \SSS'(\omega^{\sigma^i_l}) - \SSS'(\omega^N)
      \right)
      +
      \left(
        \SSS'(\omega^{\sigma^i_{l+1}}) - \SSS'(\omega^{\tau^i_l})
      \right)\\
      +
      \left(
        \SSS'(\omega^{\sigma^i_{l+2}}) - \SSS'(\omega^{\tau^i_{l+1}})
      \right)
      +\cdots+
      \left(
        \SSS'(\omega^{\sigma^i_k}) - \SSS'(\omega^{\tau^i_{k-1}})
      \right)\\
      +
      \left(
        \SSS'(\omega^n) - \SSS'(\omega^{\tau^i_k})
      \right)
      >
      -c/4 + (k-l)(b_i-a_i) - c/4
      \ge
      -c/2.
    \end{multline*}
  \item 
    If $\omega^N,\omega^n\in[\tau^i_{l-1},\sigma^i_l]$ for some $l=1,2,\ldots$:
    \begin{equation*}
      \SSS^i(\omega^n) - \SSS^i(\omega^N)
      =
      \SSS'(\omega^n) - \SSS'(\omega^N)
      >
      -c/4.
    \end{equation*}
  \end{itemize}
  In all five cases we have (\ref{eq:pedantic}).
  This implies
  $\TTT(\omega^n)-\TTT(\omega^N)>-c/2$,
  and so
  $\SSS^*(\omega^n)-\SSS^*(\omega^N)>-c$
  for all $n\ge N$
  (remember that $\epsilon<1$).
  The latter contradicts the fact that
  $\SSS^*(\omega^n)-\SSS^*(\omega^N)<-c$
  for some $n\ge N$
  (namely, for any $n\ge N$ satisfying
  $\SSS^*(\omega^n)<a$).

  This completes the proof in the case
  where $\EEE_1,\EEE_2,\ldots$ are superexpectations.
  However, we do not really need Axiom \ref{ax:countable}:
  despite the appearance of an infinite sum in (\ref{eq:T}),
  for each situation $s$ and each $x\in\XXX$
  the increment $\TTT(sx)-\TTT(s)$ of $\TTT$
  can be represented (assuming $\TTT(s)<\infty$) as
  \begin{equation*}
    \TTT(sx)-\TTT(s)
    =
    \sum_{i=1}^{\infty}
    2^{-i}
    (\SSS^i(sx)-\SSS^i(s))
    =
    \left(
      \sum_{i=1}^{\infty}
      w_i
    \right)
    (\SSS'(sx)-\SSS'(s)),
  \end{equation*}
  where $w_i\in\{0,2^{-i}\}$
  (this makes the series $\sum_{i=1}^{\infty}w_i$ convergent in $\bbbr$)
  are defined by
  $$
    w_i
    :=
    \begin{cases}
      2^{-i} & \text{if $\exists k: s\in[\tau^i_{k-1},\sigma^i_k)$}\\
      0 & \text{otherwise}.
    \end{cases}
  $$
  Since $\SSS'$ is a supermartingale,
  $\EEE_{\lvert s\rvert+1}(\TTT(s\,\cdot)-\TTT(s))\le0$.
  This argument for $\TTT$ being a supermartingale
  does not depend on Axiom \ref{ax:countable}.
\end{proof}

\ifFULL\bluebegin
  \begin{remark}
    There are two ways to avoid using Axiom \ref{ax:countable}
    in the proof of Theorem~\ref{thm:expectation}:
    the one used at the end of the proof,
    and ``starting each $\SSS^i$ at round $i$''.
    Namely (assuming $s_0=\Box$),
    for each $i$ we can define a positive supermartingale $\SSS^i$ with
    \begin{equation*}
      \SSS^i_0 = \SSS^i_1 = \cdots = \SSS^i_i = 1
    \end{equation*}
    (and the corresponding strategy $\Sigma$
    making the vacuous move $f_n:=\K_{n-1}$ during the first $i$ rounds)
    converging to $\infty$
    when $\liminf_n\SSS_n<a_i$ and $\limsup_n\SSS_n>b_i$.
    Essentially, we can set $\tau^i_0:=i$,
    then set (\ref{eq:stopping-times}) for $k=1,2,\ldots$,
    and then define $\SSS^i$ to be $\K^{\Sigma^i}$,
    where the strategy $\Sigma^i$ is given by (\ref{eq:P}).
    (Although some precautions are needed to ensure that $\SSS^i$ is positive.)
    Now the sum
    $
      \sum_{i=1}^{\infty}
      w_i
      \Sigma^i_n
    $
    is finite,
    since all terms with $i>n$ are zero.
  \end{remark}
\blueend\fi

Replacing the $\liminf_{n\to\infty}$ in (\ref{eq:upper-expectation})
by $\inf_n$ or $\sup_n$ does change the definition.
If we replace the $\liminf_{n\to\infty}$ by $\inf_n$,
we will have $\UpperExpect(\xi\givn s)=\sup_{\omega\in\Gamma(s)}\xi(\omega)$.
In the following example
we consider replacing $\liminf_{n\to\infty}$ by $\sup_n$.
\begin{example}\label{ex:expectation}
  Set
  \begin{equation*}
    \UpperExpect_1(\xi)
    :=
    \inf
    \left\{
      \SSS(\Box)
      \Bigm|
      \forall\omega\in\Omega:
      \sup_n
      \SSS(\omega^n)
      \ge
      \xi(\omega)
    \right\},
  \end{equation*}
  $\SSS$ ranging over the bounded below supermartingales.
  It is always true that $\UpperExpect_1(\xi)\le\UpperExpect(\xi)$.
  Consider the coin-tossing protocol (\cite{shafer/vovk:2001}, Section 8.2),
  which is the special case of Protocol~1 with $\XXX=\{0,1\}$
  and $\EEE_n(f)=(f(0)+f(1))/2$ for all $n\in\bbbn$ and $f\in\bbbrbarXXX$.
  For each $\epsilon\in(0,1]$ there exists a bounded positive function
  $\xi$ on $\Omega$
  such that $\UpperExpect(\xi)=1$ and $\UpperExpect_1(\xi)=\epsilon$.
  \ifFULL\bluebegin
    Does this remain true when $\epsilon=0$?
  \blueend\fi
\end{example}
\begin{proof}
  Let us demonstrate the following equivalent statement:
  for any $C\ge1$ there exists a bounded positive function $\xi$
  such that $\UpperExpect_1(\xi)=1$ and $\UpperExpect(\xi)=C$.
  Fix such a $C$.
  Define $\Xi:\Omega\to[0,\infty]$
  by the requirement $\Xi(\omega):=2^n$
  where $n$ is the number of $1$s at the beginning of $\omega$:
  $
    n:=\max\{i\st\omega_1=\cdots=\omega_i=1\}
  $.
  It is obvious that $\UpperExpect_1(\Xi)=1$ and $\UpperExpect(\Xi)=\infty$.
  However, $\Xi$ is unbounded.
  We can always find $A\ge1$ such that $\UpperExpect(\min(\Xi,A))=C$
  (as the function $a\mapsto\UpperExpect(\min(\Xi,a))$ is continuous).
  Since $\UpperExpect_1(\min(\Xi,A))=1$,
  we can set $\xi:=\min(\Xi,A)$.
\end{proof}

Game-theoretic probability is a special case of game-theoretic expectation,
and in this special case
it is possible to replace $\liminf_{n\to\infty}$ not only by $\limsup_{n\to\infty}$
but also by $\sup_n$,
provided we restrict our attention to positive supermartingales
(simple examples show that this qualification is necessary).
By Lemma~\ref{lem:referee-2},
the definition of conditional upper probability $\UpperProb$
can be rewritten as
\begin{equation}\label{eq:upper-probability}
  \UpperProb(E\givn s)
  :=
  \inf
  \left\{
    \SSS(s)
    \bigm|
    \liminf_{n\to\infty} \SSS(\omega^n)
    \ge
    1
    \text{ for all $\omega\in E\cap\Gamma(s)$}
  \right\},
\end{equation}
$\SSS$ ranging over the positive supermartingales.
\begin{lemma}\label{lem:probability}
  The definition of upper probability
  will not change if we replace the $\liminf_{n\to\infty}$
  in (\ref{eq:upper-probability})
  by $\limsup_{n\to\infty}$ or by $\sup_n$.
\end{lemma}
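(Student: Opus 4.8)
The plan is to show that all three versions of (\ref{eq:upper-probability}) --- the one with $\liminf_{n\to\infty}$, the one with $\limsup_{n\to\infty}$, and the one with $\sup_{n\ge\lvert s\rvert}$ in place of the original $\liminf_{n\to\infty}$ --- define the same number, namely $\UpperProb(E\givn s)$. First I would dispose of the easy half. Since $\liminf_{n\to\infty}a_n\le\limsup_{n\to\infty}a_n\le\sup_{n\ge\lvert s\rvert}a_n$ for any sequence, the constraint ``$\liminf_n\SSS(\omega^n)\ge1$ for all $\omega\in E\cap\Gamma(s)$'' is the most stringent of the three, so the class of admissible positive supermartingales is smallest for the $\liminf$-version and largest for the $\sup_n$-version; taking the infimum of $\SSS(s)$ over a larger class can only lower its value, so the $\sup_n$-version $\le$ the $\limsup$-version $\le$ the $\liminf$-version, and the last of these equals $\UpperProb(E\givn s)$ by (\ref{eq:upper-probability}). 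Thus it remains only to prove the single inequality $\UpperProb(E\givn s)\le$ (the $\sup_n$-version).

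For this I would use the standard ``freezing'' (optional stopping) trick. Fix $\epsilon\in(0,1)$ and a positive supermartingale $\SSS$ with $\SSS(s)<r$ and $\sup_{n\ge\lvert s\rvert}\SSS(\omega^n)\ge1$ for every $\omega\in E\cap\Gamma(s)$. Let $\tau$ be the cut consisting of the minimal situations $t\supseteq s$ with $\SSS(t)\ge1-\epsilon$, and define $\SSS^\epsilon$ by $\SSS^\epsilon(t):=\SSS(t^\tau)$ when $t^\tau$ exists, $\SSS^\epsilon(t):=\SSS(t)$ when $s\subseteq t$ but $t^\tau$ does not exist, and $\SSS^\epsilon(t):=\infty$ when $s\not\subseteq t$. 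Since $\SSS\ge0$ and every frozen value $\SSS(t^\tau)$ is $\ge1-\epsilon$, the function $\SSS^\epsilon$ is positive, and it is a supermartingale: at a situation $t$ lying strictly before $\tau$ one checks $\SSS^\epsilon(t\,\cdot)=\SSS(t\,\cdot)$ and $\SSS^\epsilon(t)=\SSS(t)$, at a situation $t$ already on or past $\tau$ the function $\SSS^\epsilon(t\,\cdot)$ is identically the constant $\SSS(t^\tau)$ so Axiom~\ref{ax:norm} applies, and Axiom~\ref{ax:order} handles the situations $t\not\supseteq s$, where everything in sight is $\infty$. Note that, unlike the proof of Theorem~\ref{thm:expectation}, this modification involves no infinite sum, so it works for arbitrary outer probability contents $\EEE_1,\EEE_2,\ldots$, not only for superexpectations.

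Finally, for $\omega\in E\cap\Gamma(s)$ the hypothesis $\sup_{n\ge\lvert s\rvert}\SSS(\omega^n)\ge1>1-\epsilon$ forces $\SSS(\omega^n)\ge1-\epsilon$ for some $n\ge\lvert s\rvert$, so $\omega^\tau$ exists and $\SSS^\epsilon(\omega^m)=\SSS(\omega^\tau)\ge1-\epsilon$ for all $m\ge\lvert\omega^\tau\rvert$; hence $\liminf_{m\to\infty}\SSS^\epsilon(\omega^m)\ge1-\epsilon$. Then $\SSS'':=(1-\epsilon)^{-1}\SSS^\epsilon$ is again a positive supermartingale, it satisfies $\liminf_{m\to\infty}\SSS''(\omega^m)\ge1$ for every $\omega\in E\cap\Gamma(s)$, and $\SSS''(s)=(1-\epsilon)^{-1}\SSS(s)<(1-\epsilon)^{-1}r$, so by (\ref{eq:upper-probability}) we get $\UpperProb(E\givn s)\le(1-\epsilon)^{-1}r$; taking the infimum over admissible $\SSS$ and then letting $\epsilon\downarrow0$ yields $\UpperProb(E\givn s)\le$ (the $\sup_n$-version), which completes the chain of inequalities. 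The only genuinely fiddly step is the bookkeeping that $\SSS^\epsilon$ is a bona fide supermartingale at the ``seam'' situations (those in the cut $\tau$ and those not extending $s$); I also expect one must read the $\sup_n$ in the statement as a supremum over $n\ge\lvert s\rvert$, since a positive supermartingale can be large on prefixes strictly shorter than $s$, and the freezing argument --- which is special to probabilities and would fail for general expectations, cf.\ Example~\ref{ex:expectation} --- depends on this reading.
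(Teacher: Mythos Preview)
Your proof is correct and uses essentially the same optional-stopping idea as the paper: once a positive supermartingale satisfying the $\sup_n$ condition is given, freeze it the first time it reaches (or approaches) $1$, and the frozen process satisfies the $\liminf$ condition with the same value at $s$. The paper's version is slightly terser --- it first notes that $\ge1$ may be replaced by $>1$ in (\ref{eq:upper-probability}) (by the same rescaling you perform at the end) and then simply stops when the capital strictly exceeds $1$, avoiding your explicit $\epsilon$ bookkeeping. Your caveat about reading $\sup_n$ as $\sup_{n\ge\lvert s\rvert}$ is a valid observation that the paper does not make explicit; its later uses of the lemma are all in the case $s=\Box$, where the issue does not arise.
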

\noindent
It is obvious that the definition will change
if we replace the $\liminf_{n\to\infty}$ by $\inf_n$:
in this case we will have
$$
  \UpperProb(E\givn s)
  =
  \begin{cases}
    0 & \text{if $E\cap\Gamma(s)=\emptyset$}\\
    1 & \text{otherwise}.
  \end{cases}
$$
\begin{proof}[Proof of Lemma~\ref{lem:probability}]
  It suffices to prove that the definition will not change
  if we replace the $\liminf_{n\to\infty}$ in (\ref{eq:upper-probability})
  by $\sup_n$.
  Consider a strategy for Skeptic resulting in a positive capital process.
  If this strategy ensures
  $
    \sup_n
    \K_n
    >
    1
  $
  when $x_1x_2\ldots\in E\cap\Gamma(s)$
  (it is obvious that it does not matter whether we have $\ge$ or $>$
  in (\ref{eq:upper-probability})),
  Skeptic can also ensure
  $
    \liminf_{n\to\infty}
    \K_n
    >
    1
  $
  when $x_1x_2\ldots\in E\cap\Gamma(s)$
  by stopping
  (i.e., always choosing $f_n$ identically equal to his current capital)
  after his capital $\K_n$ exceeds~$1$.
\end{proof}

\begin{remark}\label{rem:no-loss}
  The basic notion of this article is that of a bounded below supermartingale;
  in particular,
  upper and lower expectation and probability
  are defined in terms of bounded below supermartingales.
  To define the latter it would be sufficient
  to start, instead of outer probability contents,
  from functionals $\FFF$ that satisfy Axioms~\ref{ax:order}--\ref{ax:norm}
  and whose domain consists of the functions in $\bbbrbarX$
  that are bounded from below.
  To see that no generality is lost when one starts from outer probability contents,
  it is sufficient to check that any such $\FFF$ can be extended
  to an outer probability content.
  One possible extension is $\EEE(f):=\lim_{a\to-\infty}\FFF(\max(f,a))$.
  Axioms~\ref{ax:order}--\ref{ax:norm} are easy to check for $\EEE$;
  e.g., Axiom~\ref{ax:sum} follows from the inequality
  $\max(f+g,2a)\le\max(f,a)+\max(g,a)$ and Axiom~\ref{ax:sum} for $\FFF$:
  \begin{multline*}
    \EEE(f+g)
    =
    \lim_{a\to-\infty}\FFF(\max(f+g,a))
    =
    \lim_{a\to-\infty}\FFF(\max(f+g,2a))\\
    \le
    \lim_{a\to-\infty}\FFF(\max(f,a)+\max(g,a))
    \le
    \lim_{a\to-\infty}
    \left(
      \FFF(\max(f,a))
      +
      \FFF(\max(g,a))
    \right)\\
    =
    \lim_{a\to-\infty}
    \FFF(\max(f,a))
    +
    \lim_{a\to-\infty}
    \FFF(\max(g,a))
    =
    \EEE(f)+\EEE(g).
  \end{multline*}
  It is also easy to check that $\EEE$ will be a superexpectation functional
  whenever $\FFF$ is a superexpectation functional.
\end{remark}

\section{L\'evy's zero-one law}
\label{sec:levy}

The following simple theorem is our main result.
\begin{theorem}\label{thm:levy}
  Let $\xi:\Omega\to(-\infty,\infty]$ be bounded from below.
  For almost all $\omega\in\Omega$,
  \begin{equation}\label{eq:goal}
    \liminf_{n\to\infty}
    \UpperExpect(\xi\givn\omega^n)
    \ge
    \xi(\omega).
  \end{equation}
\end{theorem}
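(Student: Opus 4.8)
The plan is to show that the event
\[
  E := \bigl\{\omega\in\Omega : \liminf_{n\to\infty}\UpperExpect(\xi\givn\omega^n) < \xi(\omega)\bigr\}
\]
is null. Write $v(s):=\UpperExpect(\xi\givn s)$ for a situation $s$, and let $m\in\bbbr$ be a lower bound for $\xi$. Applying Lemma~\ref{lem:referee-1} to any bounded below supermartingale witnessing the infimum that defines $v(s)$, and using $\xi\ge m$, one gets $v(s)\ge m$ for every $s$. Hence for $\omega\in E$ we have $m\le\liminf_n v(\omega^n)<\xi(\omega)\le\infty$, so there are rationals $p<q$ with $\liminf_n v(\omega^n)<p<q<\xi(\omega)$. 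Thus $E=\bigcup E_{p,q}$, the union being over all pairs of rationals $p<q$, where
\[
  E_{p,q} := \bigl\{\omega : \liminf_{n\to\infty} v(\omega^n) < p\bigr\} \cap \bigl\{\omega : \xi(\omega) > q\bigr\}.
\]
It therefore suffices to prove $\UpperProb(E_{p,q})=0$ for each such pair, and then to combine the resulting supermartingales.

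Fix $p<q$, put $q':=(p+q)/2$ and $\delta:=(q-p)/4$, and note that $\rho:=(q'-m)/(p+\delta-m)>1$ (the denominator is positive since $m\le v(s)<p$ wherever $v(s)<p$). Let $\epsilon_0>0$. I would build a positive supermartingale $\SSS$ with $\SSS(\Box)=\epsilon_0$ and $\sup_n\SSS(\omega^n)=\infty$ for all $\omega\in E_{p,q}$; then $\UpperProb(E_{p,q})\le\epsilon_0$ by (\ref{eq:upper-probability}) and Lemma~\ref{lem:probability}, and $\epsilon_0\downarrow0$ gives $\UpperProb(E_{p,q})=0$. The strategy producing $\SSS$ alternates \emph{waiting} and \emph{betting} phases. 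In a waiting phase Skeptic keeps his capital unchanged ($f_n\equiv\K_{n-1}$, legitimate by Axiom~\ref{ax:norm}) until he arrives at a situation $t$ with $v(t)<p$; there, with capital $C$, he begins a betting phase. By the definition of $v(t)=\UpperExpect(\xi\givn t)$ there is a bounded below supermartingale $\SSS^{(t)}$ with $\SSS^{(t)}(t)\le v(t)+\delta<p+\delta$ and $\liminf_m\SSS^{(t)}(\omega^m)\ge\xi(\omega)$ for all $\omega\in\Gamma(t)$; by Lemma~\ref{lem:referee-1}, $\SSS^{(t)}(u)\ge m$ for all $u\supseteq t$. Skeptic now plays, for $u\supseteq t$, the supermartingale $u\mapsto C+\lambda\bigl(\SSS^{(t)}(u)-\SSS^{(t)}(t)\bigr)$ with $\lambda:=C/(p+\delta-m)$; this stays $\ge0$ because $\SSS^{(t)}(u)-\SSS^{(t)}(t)\ge m-(p+\delta)$. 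He ends the betting phase at the first $t'$ with $\SSS^{(t)}(t')\ge q'$, at which point his capital is at least $C+\lambda(q'-p-\delta)=C\rho$, and he resumes waiting. On $E_{p,q}$ every waiting phase ends (because $v(\omega^n)<p$ infinitely often) and every betting phase ends (because $\liminf_m\SSS^{(t)}(\omega^m)\ge\xi(\omega)>q>q'$), so $\omega$ passes through infinitely many completed cycles, each multiplying Skeptic's capital by at least $\rho>1$; hence $\SSS(\omega^n)\to\infty$, as required.

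Under Axiom~\ref{ax:countable}, Lemma~\ref{lem:referee-4} gives at once $\UpperProb(E)\le\sum_{p<q}\UpperProb(E_{p,q})=0$, which is the theorem. To dispense with Axiom~\ref{ax:countable} one combines the countably many supermartingales $\SSS^{(p,q)}$ (normalized to $\SSS^{(p,q)}(\Box)=\epsilon_0$) into $\TTT:=\sum_{p<q}2^{-N(p,q)}\SSS^{(p,q)}$ for a bijective enumeration $N$, and checks that $\TTT$ is a supermartingale \emph{without} Axiom~\ref{ax:countable} by the device used at the end of the proof of Theorem~\ref{thm:expectation}: if one fixes a single $\SSS^{(t)}$ for each situation $t$ and caps the leverage $\lambda$ by a constant (which merely slows the growth of each $\SSS^{(p,q)}$ from geometric to linear, still forcing $\SSS^{(p,q)}(\omega^n)\to\infty$ on $E_{p,q}$), the increment $\TTT(sx)-\TTT(s)$ becomes a \emph{finite} nonnegative combination $\sum_{t\subseteq s}c_t\bigl(\SSS^{(t)}(sx)-\SSS^{(t)}(s)\bigr)$ of increments of the $\SSS^{(t)}$, so finite subadditivity (Axioms~\ref{ax:order}, \ref{ax:scaling}, \ref{ax:sum}) yields $\EEE_{\lvert s\rvert+1}(\TTT(s\,\cdot))\le\TTT(s)$, and $\sup_n\TTT(\omega^n)=\infty$ for $\omega\in E$.

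I expect the genuinely delicate part to be this last step: reconciling the need, for pairs $(p,q)$ with a small gap $q-p$, of a small approximation parameter $\delta$ with the requirement that each situation carry only one (or finitely many relevant) supermartingale $\SSS^{(t)}$, so that the combined increments stay finite rather than merely summable. The single-pair estimate $\UpperProb(E_{p,q})=0$, by contrast, is a routine stopping-and-rescaling argument once $v\ge m$ and the near-optimal supermartingales $\SSS^{(t)}$ are in hand.
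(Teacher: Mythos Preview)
Your proposal is essentially the paper's proof. The decomposition into rational pairs, the alternating wait/bet strategy using near-optimal witnessing supermartingales for $\UpperExpect(\xi\givn t)$, the geometric growth per completed cycle, and the weighted countable combination are all the same; the only cosmetic difference is that the paper first shifts $\xi$ to be strictly positive and then writes the betting phase \emph{multiplicatively}, $\SSS^i(sx):=\SSS^i(s)\,\SSS_t(sx)/\SSS_t(s)$, whereas you write it additively with a leverage $\lambda\propto C$, which is the same thing after the shift.

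On the part you flag as delicate---dropping Axiom~\ref{ax:countable}---the paper resolves it exactly along the lines you anticipate, but more cleanly than ``capping the leverage''. It fixes \emph{one} witnessing supermartingale $\SSS_s$ per situation $s$ with approximation error $2^{-\lvert s\rvert}$ (so the error depends only on $s$, not on the pair), keeps the multiplicative scaling (which makes positivity automatic and avoids your cap), and observes that the increment $\TTT(sx)-\TTT(s)$ then groups as a \emph{finite} sum over the ancestors $t\subseteq s$, each term being a nonnegative multiple of $\SSS_t(sx)/\SSS_t(s)$; Axioms~\ref{ax:order}--\ref{ax:norm} suffice. The growth factor per cycle becomes $b_i/(a_i+2^{-\lvert\omega^{\tau^i_j}\rvert})$, which still tends to $b_i/a_i>1$ since $\lvert\omega^{\tau^i_j}\rvert\to\infty$. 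Your capped-leverage idea can be made to work, but you would need to replace the pair-dependent $\delta=(q-p)/4$ by the situation-dependent $2^{-\lvert t\rvert}$ (otherwise different pairs demand different $\SSS^{(t)}$), and then check that the finitely many early cycles with $2^{-\lvert t\rvert}\ge(q-p)/2$ do not destroy positivity or divergence---this is where the multiplicative formulation saves bookkeeping.
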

\noindent
This theorem is a game-theoretic version of L\'evy's zero-one law.
Its name derives from its well-known connections with various zero-one phenomena,
some of which will be explored in the next section\ifnotJOURNAL\
  and Section \ref{sec:bartfai-revesz}\fi.

\ifFULL\bluebegin
  The proof of Theorem \ref{thm:levy},
  in combination with Shen's argument for replacing $\limsup_n\SSS(\omega^n)=\infty$
  by $\lim_n\SSS(\omega^n)=\infty$
  in the definition of strongly null sets,
  shows that the event (\ref{eq:goal}) is strongly almost certain
  (in the sense that there is a test supermartingale that tends to $\infty$
  on its complement).
  \textbf{Open question:}
  is ``almost certain'' the same thing as ``strongly almost certain?
  (This needs to be answered under Axioms \ref{ax:order}--\ref{ax:norm};
  under Axioms \ref{ax:order}--\ref{ax:countable} the answer ``yes'' is obvious
  since countable sums of positive supermartingales are positive supermartingales.)
\blueend\fi

\begin{proof}[Proof of Theorem \ref{thm:levy}]
  This proof will be similar to the proof of Theorem~\ref{thm:expectation}
  in that it will be based on the idea used in the standard proof
  of Doob's martingale convergence theorem.
  However, this idea will be applied in a less familiar mode
  (``multiplicative'' rather than ``additive''),
  and so before giving a detailed proof
  we explain the intuition behind it
  making the simplifying assumption that $\EEE_1,\EEE_2,\ldots$ are superexpectations.

  By Lemma~\ref{lem:probability},
  it suffices to construct a positive supermartingale starting from $1$
  that is unbounded on $\omega\in\Omega$
  for which (\ref{eq:goal}) is not true.
  (We say that a supermartingale is unbounded on a sequence $\omega$
  if it is unbounded on its prefixes $\omega^n$ as $n\to\infty$.)
  Without loss of generality we will assume $\xi$ to be positive
  (we can always redefine $\xi:=\xi-\inf\xi$).
  According to Lemma \ref{lem:referee-4} (last statement),
  we can, without loss of generality,
  replace ``for which (\ref{eq:goal}) is not true'' by
  \begin{equation}\label{eq:event}
    \liminf_{n\to\infty}
    \UpperExpect(\xi\givn\omega^n)
    <
    a
    <
    b
    <
    \xi(\omega)
  \end{equation}
  where $a$ and $b$ are given positive rational numbers such that $a<b$.
  The supermartingale is defined as the capital process
  of the following strategy for Skeptic.
  Let $\omega\in\Omega$ be the sequence of moves
  chosen by World.
  Start with $1$ monetary unit.
  Wait until
  $\UpperExpect(\xi\givn\omega^n) < a$
  (if this never happens, do nothing, i.e., always choose constant
  $f_n\equiv\K_{n-1}$).
  As soon as this happens,
  choose a positive supermartingale $\SSS_1$ starting from $a$,
  $\SSS_1(\omega^n) = a$,
  whose upper limit $\psi\in\Omega\mapsto\limsup_{m\to\infty}\SSS_1(\psi^m)$
  exceeds $\xi$ on $\Gamma(\omega^n)$.
  Maintain capital $\SSS_1/a$ until $\SSS_1$ reaches a value $m_1>b$
  (at which point Skeptic's capital is $m_1/a>b/a$).
  After that do nothing until
  $\UpperExpect(\xi\givn\omega^n) < a$.
  As soon as this happens,
  choose a positive supermartingale $\SSS_2$ starting from $a$,
  $\SSS_2(\omega^n) = a$,
  whose upper limit exceeds $\xi$ on $\Gamma(\omega^n)$.
  Maintain capital $(m_1/a^2)\SSS_2$
  until $\SSS_2$ reaches a value $m_2>b$
  (at which point Skeptic's capital is $m_1m_2/a^2>(b/a)^2$).
  After that do nothing until
  $\UpperExpect(\xi\givn\omega^n) < a$.
  As soon as this happens,
  choose a positive supermartingale $\SSS_3$ starting from $a$
  whose upper limit exceeds $\xi$ on $\Gamma(\omega^n)$.
  Maintain capital $(m_1m_2/a^3)\SSS_3$
  until $\SSS_3$ reaches a value $m_3>b$
  (at which point Skeptic's capital is $m_1m_2m_3/a^3>(b/a)^3$).
  And so on.
  On the event (\ref{eq:event})
  Skeptic's capital will be unbounded.

  We start the formal proof
  by setting $\xi':=\xi-C$,
  where $C$ is any constant satisfying $C<\inf\xi$.
  Let $[a_i,b_i]$, $i=1,2,\ldots$,
  be an enumeration of all intervals with $0\le a_i<b_i<\infty$ and both end-points rational.
  For each $i$ we will define a positive supermartingale $\SSS^i$ with
  $
    \SSS^i(\Box) = 1
  $
  such that $\SSS^i(\omega^n)$ converges to $\infty$ as $n\to\infty$
  when
  \begin{equation}\label{eq:event-i}
    \liminf_{n\to\infty}
    \UpperExpect(\xi'\givn\omega^n)
    <
    a_i
    <
    b_i
    <
    \xi'(\omega)
  \end{equation}
  (cf.\ (\ref{eq:event})).
  First we define two sequences of cuts
  $\sigma^i_0,\sigma^i_1,\sigma^i_2,\ldots$ and $\tau^i_1,\tau^i_2,\ldots$
  and a family of supermartingales $\SSS_s$, $s\in\cup_{k=1}^{\infty}\tau^i_k$
  (the dependence of $\SSS_s$ on $i$ is not indicated explicitly
  but should always be borne in mind).
  Set $\sigma^i_0:=\{\Box\}$.
  For $k=1,2,\ldots$,
  set
  \begin{equation*}
    \tau^i_k
    :=
    \{s \st \UpperExpect(\xi'\givn s)<a_i, \exists t\subset s: t\in\sigma^i_{k-1},
      \forall u\in(t,s): \UpperExpect(\xi'\givn u)\ge a_i\},
  \end{equation*}
  choose for each $s\in\tau^i_k$ a positive supermartingale $\SSS_s$
  satisfying $\SSS_s(s)<a_i$ and, for all $\omega\in\Gamma(s)$,
  $\liminf_n\SSS_s(\omega^n)\ge\xi'(\omega)$,
  and set
  \begin{equation*}
    \sigma^i_k
    :=
    \{
      s
      \st
      \exists t\subset s: t\in\tau^i_{k},
      \SSS_t(s)>b_i,
      \forall u\in(t,s): \SSS_t(u)\le b_i
    \}.
  \end{equation*}
  This definition is inductive:
  the two cuts and the family of supermartingales are defined in the indicated order:
  first $\sigma^i_0$,
  then $\tau^i_1$, then $\SSS_s$ for $s\in\tau^i_1$, then $\sigma^i_1$,
  then $\tau^i_2$, then $\SSS_s$ for $s\in\tau^i_2$, then $\sigma^i_2$,
  etc.
  Now we define $\SSS^i$ inductively.
  Set $\SSS^i(\Box):=1$.
  For all situations $s$ and all $x\in\XXX$,
  define $\SSS^i(sx)$ via $\SSS^i(s)$ as follows:
  \begin{itemize}
  \item
    First suppose that $\SSS^i(s)<\infty$ and,
    for some $k\in\bbbn$, $s\in[\tau^i_k,\sigma^i_k)$.
    Let $k$ be the unique value satisfying $s\in[\tau^i_k,\sigma^i_k)$.
    Set $t:=s^{\tau^i_k}$
    and
    $\SSS^i(sx) := \SSS^i(s) \SSS_t(sx) / \SSS_t(s)$
    (notice that, by induction, $\SSS^i(s)<\infty$ implies $\SSS_t(s)<\infty$).
  \item
    Otherwise,
    set $\SSS^i(sx):=\SSS^i(s)$.
  \end{itemize}
  Since each $\SSS_s$ is a supermartingale
  (strictly positive, by Lemma~\ref{lem:referee-1}),
  Axiom~\ref{ax:scaling} shows that $\SSS^i$ is also a supermartingale.
  It is positive by construction.

  Let us check that each supermartingale $\SSS^i$
  satisfies $\limsup_n\SSS^i(\omega^n)=\infty$
  for $\omega\in\Omega$ satisfying~(\ref{eq:event-i}).
  For all $k\in\bbbn$ and all $\omega$ satisfying~(\ref{eq:event-i}),
  $\omega^{\sigma^i_k}$ exists and satisfies
  \begin{equation}\label{eq:SSS}
    \SSS^i(\omega^{\sigma^i_k})
    =
    \frac{\SSS_{\omega^{\tau^i_1}}(\omega^{\sigma^i_1})}
         {\SSS_{\omega^{\tau^i_1}}(\omega^{\tau^i_1})}\,
    \frac{\SSS_{\omega^{\tau^i_2}}(\omega^{\sigma^i_2})}
         {\SSS_{\omega^{\tau^i_2}}(\omega^{\tau^i_2})}
    \cdots
    \frac{\SSS_{\omega^{\tau^i_k}}(\omega^{\sigma^i_k})}
         {\SSS_{\omega^{\tau^i_k}}(\omega^{\tau^i_k})}
    \ge
    (b_i/a_i)^k
    \to\infty
  \end{equation}
  as $k\to\infty$.
  Setting
  \begin{equation*}
    \TTT
    :=
    \sum_{i=1}^{\infty}
    2^{-i}
    \SSS^i
  \end{equation*}
  and assuming that $\EEE_1,\EEE_2,\ldots$ are superexpectations,
  we obtain a positive supermartingale $\TTT$ with $\TTT(\Box)=1$
  that is unbounded on the complement of~(\ref{eq:goal}).
  Application of Lemma~\ref{lem:probability} completes the proof
  under the assumption that $\EEE_1,\EEE_2,\ldots$ are superexpectations.

  It remains to get rid of the assumption.
  To this end we modify the definition of the supermartingales $\SSS_s$:
  now in each situation $s\in\XXX^*$
  such that $\UpperExpect(\xi'\givn s)<\infty$
  we fix a strictly positive supermartingale $\SSS_s$ such that
  $\SSS_s(s)<\UpperExpect(\xi'\givn s)+2^{-\lvert s\rvert}$
  and, for all $\omega\in\Gamma(s)$,
  $\liminf_n\SSS_s(\omega^n)\ge\xi'(\omega)$.
  This definition does not depend on $i$ anymore.
  Using the new definition of $\SSS_s$,
  define stopping times $\sigma^i_k,\tau^i_k$,
  supermartingales $\SSS^i$, and a function $\TTT$ as before;
  remember that in the definition of $\SSS^i$,
  $\SSS^i(sx):=\infty$ whenever $\SSS^i(s)=\infty$.
  For each $i\in\bbbn$ set
  $$
    A_i
    :=
    \left\{
      s\in\XXX^*
      \st
      \SSS^i(s)<\infty,
      \exists k\in\bbbn: s\in[\tau^i_k,\sigma^i_k)
    \right\}
  $$
  (this is the set of situations in which $\SSS^i$ is ``active''),
  and for each $s\in A_i$ set $T(s,i):=s^{\tau^i_k}$,
  where $k$ satisfies $s\in[\tau^i_k,\sigma^i_k)$
  (there is only one such $k$).

  Fix an arbitrary situation $s\in\XXX^{n-1}$, for some $n\in\bbbn$;
  our next goal is to prove $\EEE_{n}(\TTT(s\,\cdot))\le\TTT(s)$.
  Without loss of generality,
  assume $\TTT(s)<\infty$.
  Setting, for $t\subseteq s$,
  $$
    w_t
    :=
    \sum_{i\in\bbbn:s\in A_i,T(s,i)=t}
    2^{-i} \SSS^i(s)
    \in
    [0,\infty],
  $$
  we have
  \begin{multline*}
    \TTT(sx)
    =
    \sum_{i=1}^\infty
    2^{-i} \SSS^i(sx)
    =
    \sum_{i:s\in A_i}
    2^{-i} \SSS^i(s)
    \frac{\SSS_{T(s,i)}(sx)}{\SSS_{T(s,i)}(s)}
    +
    \sum_{i:s\notin A_i}
    2^{-i} \SSS^i(s)\\
    =
    \sum_{t\subseteq s}
    w_t
    \frac{\SSS_t(sx)}{\SSS_t(s)}
    +
    \sum_{i:s\notin A_i}
    2^{-i} \SSS^i(s)
  \end{multline*}
  (the denominators $\SSS_{T(s,i)}(s)$ and $\SSS_t(s)$
  are finite because of our assumption $\TTT(s)<\infty$),
  which implies, since the sum $\sum_{t\subseteq s}$ is finite
  (namely, contains $\lvert s\rvert+1$ addends),
  \begin{multline*}
    \EEE_{n}(\TTT(s\,\cdot))
    =
    \sum_{t\subseteq s}
    w_t
    \frac{\EEE_{n}(\SSS_t(s\,\cdot))}{\SSS_t(s)}
    +
    \sum_{i:s\notin A_i}
    2^{-i} \SSS^i(s)\\
    \le
    \sum_{t\subseteq s}
    w_t
    \frac{\SSS_t(s)}{\SSS_t(s)}
    +
    \sum_{i:s\notin A_i}
    2^{-i} \SSS^i(s)
    =
    \TTT(s).
  \end{multline*}
  Therefore, $\TTT$ is a supermartingale.

  Notice that~(\ref{eq:SSS}) will still hold
  for $\omega$ satisfying~(\ref{eq:event-i})
  if we replace $(b_i/a_i)^k$ by
  \begin{equation}\label{eq:product}
    \prod_{j=1}^k
    \frac{b_i}{a_i+2^{-\lvert\omega^{\tau^i_j}\rvert}};
  \end{equation}
  since $\lvert\omega^{\tau^i_j}\rvert\to\infty$ as $j\to\infty$,
  the product (\ref{eq:product}) still tends to $\infty$ as $k\to\infty$.
  Therefore, $\TTT$ is unbounded on the complement of~(\ref{eq:goal}).
\end{proof}

In the rest of the article we will derive a series of corollaries
from Theorem~\ref{thm:levy}.
First of all,
specializing Theorem~\ref{thm:levy} to the indicators of events,
we obtain:
\begin{corollary}\label{cor:levy}
  Let $E$ be any event.
  For almost all $\omega\in E$,
  \begin{equation*} 
    \UpperProb(E\givn\omega^n)
    \to
    1
  \end{equation*}
  as $n\to\infty$.
\end{corollary}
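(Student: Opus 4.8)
The plan is to read Corollary~\ref{cor:levy} off Theorem~\ref{thm:levy} by taking $\xi$ to be the indicator of~$E$. First I would apply Theorem~\ref{thm:levy} to $\xi:=\III_E$: this function maps $\Omega$ into $\{0,1\}\subseteq(-\infty,\infty]$ and is in particular bounded from below, so the theorem shows that the event
\[
  G
  :=
  \Bigl\{
    \omega\in\Omega
    \st
    \liminf_{n\to\infty}\UpperExpect(\III_E\givn\omega^n)
    \ge
    \III_E(\omega)
  \Bigr\}
\]
is almost certain. By the definition~(\ref{eq:probability}) of upper probability, $\UpperExpect(\III_E\givn s)=\UpperProb(E\givn s)$, so on $G$ we have $\liminf_{n}\UpperProb(E\givn\omega^n)\ge\III_E(\omega)$.

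Next I would record the trivial matching upper bound. For any situation $s$, Lemma~\ref{lem:referee-3} tells us that $\UpperExpect(\cdot\givn s)$ is an outer probability content, so Axioms~\ref{ax:order} and~\ref{ax:norm} applied to $\UpperExpect(\cdot\givn s)$ (with the constant function $1\ge\III_E$) give
\[
  \UpperProb(E\givn s)
  =
  \UpperExpect(\III_E\givn s)
  \le
  \UpperExpect(1\givn s)
  =
  1.
\]
Hence, for $\omega\in G\cap E$ we have $\III_E(\omega)=1$, so $\liminf_n\UpperProb(E\givn\omega^n)\ge1$ while $\limsup_n\UpperProb(E\givn\omega^n)\le1$; therefore $\UpperProb(E\givn\omega^n)\to1$ as $n\to\infty$. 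Writing $H:=E^c\cup\{\omega\in\Omega\st\UpperProb(E\givn\omega^n)\to1\}$, we have just shown $G\subseteq H$.

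It remains to observe that $H$ is almost certain, which is precisely the assertion that $\UpperProb(E\givn\omega^n)\to1$ for almost all $\omega\in E$. Since $G\subseteq H$ gives $-\III_H\le-\III_G$, Axiom~\ref{ax:order} for the outer probability content $\UpperExpect(\cdot)=\UpperExpect(\cdot\givn\Box)$ yields $\LowerProb(H)=-\UpperExpect(-\III_H)\ge-\UpperExpect(-\III_G)=\LowerProb(G)=1$, and together with $\LowerProb(H)\le\UpperProb(H)\le1$ (Corollary~\ref{cor:coherence} and the bound above) this gives $\LowerProb(H)=1$. There is no genuine difficulty in this argument; the only things needing a moment's care are unwinding the phrase ``for almost all $\omega\in E$'' and noting the universal bound $\UpperProb(\,\cdot\givn s)\le1$, both immediate from Lemma~\ref{lem:referee-3}.
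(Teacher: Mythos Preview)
Your argument is correct and follows exactly the route the paper indicates: the paper simply says the corollary is obtained by ``specializing Theorem~\ref{thm:levy} to the indicators of events'', and you have merely spelled out the easy details (the universal bound $\UpperProb(E\givn s)\le1$ from Lemma~\ref{lem:referee-3} and the unwinding of ``for almost all $\omega\in E$'') that the paper leaves implicit.
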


It is easy to check that we cannot replace the $\ge$ in (\ref{eq:goal}) by $=$,
even when $\xi$ is the indicator of an event.
For example,
suppose that $\XXX=\{0,1\}$ and each $\EEE_n$ is the $\sup$ functional:
$\EEE_n(f):=\sup_{x\in\XXX}f(x)$ for all $n\in\bbbn$ and $f\in\bbbrbarXXX$.
If $E$ consists of binary sequences containing only finitely many $1$s,
$\UpperProb(E\givn\omega^n)=1$ for all $\omega$ and $n$;
therefore,
\begin{equation*}
  \liminf_{n\to\infty}
  \UpperProb(E\givn\omega^n)
  \ne
  \III_E(\omega)
\end{equation*}
for all $\omega\in E^c$,
and $\UpperProb(E^c)=1$.

\subsection*{The case of a determinate expectation or probability}
\ifWP
  \addcontentsline{toc}{subsection}%
  {The case of a determinate expectation or probability}
\fi

\ifnotJOURNAL
  In Section~41 of \cite{levy:1937} (pp.~128--130),
  L\'evy states his zero-one law in terms of a property $E$
  that a sequence $X_1,X_2,\dots$ of random variables might or might not have.
  He writes $\text{Pr.}\{E\}$ for the initial probability of $E$,
  and $\text{Pr}_n\{E\}$ for its probability after $X_1,\dots,X_n$ is known.
  He remarks that if $\text{Pr.}\{E\}$ is well defined
  (i.e., if $E$ is measurable),
  then the conditional probabilities $\text{Pr}_n\{E\}$ are also well defined.
  Then he states the law as follows (our translation from the French):
  \begin{quotation}
    Except in cases that have probability zero,
    if $\text{Pr.}\{E\}$ is determined,
    then $\text{Pr}_n\{E\}$ tends, as $n$ tends to infinity, to one
    if the sequence $X_1,X_2,\dots$ verifies the property $E$,
    and to zero in the contrary case.
  \end{quotation}
  In this subsection we will derive a game-theoretic result
  that resembles L\'evy's statement of his result.
  We will be concerned with functions $\xi$ satisfying
  $\UpperExpect(\xi)=\LowerExpect(\xi)$
  and events $E$ satisfying
  $\UpperProb(E)=\LowerProb(E)$.
\fi
\ifJOURNAL
  In this subsection we will be concerned with functions $\xi$ satisfying
  $\UpperExpect(\xi)=\LowerExpect(\xi)$
  and events $E$ satisfying
  $\UpperProb(E)=\LowerProb(E)$.
\fi
\begin{lemma}\label{lem:determinate}
  Suppose $\EEE_1,\EEE_2,\ldots$ are superexpectations.
  Let a function $\xi:\Omega\to\bbbr$ satisfy
  $\UpperExpect(\xi)=\LowerExpect(\xi)\in\bbbr$.
  Then it is almost certain that it also satisfies
  $
    \UpperExpect(\xi\givn\omega^n)
    =
    \LowerExpect(\xi\givn\omega^n)
  $
  for all $n$.
\end{lemma}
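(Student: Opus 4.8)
The plan is to realize the statement as an instance of the maximal inequality for supermartingales. Write $c:=\UpperExpect(\xi)=\LowerExpect(\xi)\in\bbbr$, so that $\UpperExpect(\xi)+\UpperExpect(-\xi)=0$, and for each situation $s$ put $\phi(s):=\UpperExpect(\xi\givn s)+\UpperExpect(-\xi\givn s)$. By Corollary~\ref{cor:coherence} (applied in every situation) we have $\phi(s)\ge0$ always, and the conclusion to be proved is precisely that, for almost all $\omega$, $\phi(\omega^n)=0$ for all $n$ (equivalently $\UpperExpect(\xi\givn\omega^n)=-\UpperExpect(-\xi\givn\omega^n)=\LowerExpect(\xi\givn\omega^n)$, with both quantities then finite).

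First I would fix $\epsilon>0$ and, using the definition~(\ref{eq:upper-expectation}) of unconditional upper expectation (with $s=\Box$), pick bounded below supermartingales $\SSS^\epsilon,\TTT^\epsilon$ with $\SSS^\epsilon(\Box)<c+\epsilon$ and $\liminf_n\SSS^\epsilon(\omega^n)\ge\xi(\omega)$ for all $\omega\in\Omega$, and $\TTT^\epsilon(\Box)<-c+\epsilon$ and $\liminf_n\TTT^\epsilon(\omega^n)\ge-\xi(\omega)$ for all $\omega\in\Omega$; note that finiteness of $c$, not boundedness of $\xi$, is what makes this possible. Then $\UUU^\epsilon:=\SSS^\epsilon+\TTT^\epsilon$ is a bounded below supermartingale with $\UUU^\epsilon(\Box)<2\epsilon$, and since $\xi$ is real-valued, $\liminf_n\UUU^\epsilon(\omega^n)\ge\liminf_n\SSS^\epsilon(\omega^n)+\liminf_n\TTT^\epsilon(\omega^n)\ge\xi(\omega)+(-\xi(\omega))=0$ for every $\omega$; hence, by Lemma~\ref{lem:referee-1}, $\UUU^\epsilon$ is in fact a positive supermartingale. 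Also, for every situation $s$, $\SSS^\epsilon$ and $\TTT^\epsilon$ are admissible witnesses in~(\ref{eq:upper-expectation}) for $\UpperExpect(\xi\givn s)$ and $\UpperExpect(-\xi\givn s)$, so $\phi(s)\le\SSS^\epsilon(s)+\TTT^\epsilon(s)=\UUU^\epsilon(s)$.

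Next, for each rational $\delta>0$ let $B_\delta:=\{\omega\st\phi(\omega^n)\ge\delta\text{ for some }n\}$. For $\omega\in B_\delta$ we then have $\UUU^\epsilon(\omega^n)\ge\phi(\omega^n)\ge\delta$ for some $n$, so $\sup_n(\UUU^\epsilon/\delta)(\omega^n)\ge1$; since $\UUU^\epsilon/\delta$ is a positive supermartingale, the $\sup_n$ form of upper probability (Lemma~\ref{lem:probability}) gives $\UpperProb(B_\delta)\le\UUU^\epsilon(\Box)/\delta<2\epsilon/\delta$. As $B_\delta$ does not depend on $\epsilon$, letting $\epsilon\downarrow0$ yields $\UpperProb(B_\delta)=0$ for every $\delta$. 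The event $\{\omega\st\phi(\omega^n)>0\text{ for some }n\}$ equals $\bigcup_{k\ge1}B_{1/k}$, which is null by Lemma~\ref{lem:referee-4}; on its almost certain complement $\phi(\omega^n)=0$ for all $n$, which is the claim.

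The proof is short; the only things that need care are bookkeeping with the extended arithmetic of $\bbbrbar$, namely that it does not invalidate $\phi(s)\le\UUU^\epsilon(s)$ or the superadditivity of $\liminf$ (both go through because $\xi$ takes values in $\bbbr$), and the observation that the hypothesis that the $\EEE_n$ are superexpectations is used exactly once, through Lemma~\ref{lem:referee-4}, to pass from ``each $B_{1/k}$ is null'' to ``$\bigcup_k B_{1/k}$ is null''; without Axiom~\ref{ax:countable} this last step can fail, which is why the lemma assumes it.
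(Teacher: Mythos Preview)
Your proof is correct and is essentially the same as the paper's: choose near-optimal witnesses for $\UpperExpect(\xi)$ and $\UpperExpect(-\xi)$, add them to get a positive supermartingale with small initial value that dominates $\UpperExpect(\xi\givn\cdot)+\UpperExpect(-\xi\givn\cdot)$, apply the $\sup_n$ form of upper probability (Lemma~\ref{lem:probability}) to bound $\UpperProb(B_\delta)$, let $\epsilon\to0$, and finish with a countable union via Lemma~\ref{lem:referee-4}. The only cosmetic difference is that the paper fixes $n$ and $\delta$ separately before taking the countable union over both, whereas you absorb the union over $n$ into the definition of $B_\delta$.
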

\ifFULL\bluebegin
  \noindent
  In fact, this lemma does not require
  that $\EEE_1,\EEE_2,\ldots$ should be superexpectations:
  see Vovk and Shafer's paper for the IP book
  (version of December 2010).
\blueend\fi
\begin{proof}
  For any strictly positive $\epsilon$,
  there exist bounded below supermartingales $\SSS_1$ and $\SSS_2$ such that
  $$
    \SSS_1(\Box)<\UpperExpect(\xi) + \epsilon/2,
    \quad
    \SSS_2(\Box)<\UpperExpect(-\xi) + \epsilon/2
  $$
  and, for all $\omega\in\Omega$,
  $$
    \liminf_{n\to\infty}\SSS_1(\omega^n)\ge\xi(\omega),
    \quad
    \liminf_{n\to\infty}\SSS_2(\omega^n)\ge-\xi(\omega).
  $$
  Set $\SSS := \SSS_1 + \SSS_2$.
  The assumption $\UpperExpect(\xi)=\LowerExpect(\xi)\in\bbbr$
  can also be written $\UpperExpect(\xi)+\UpperExpect(-\xi)=0$.
  So the supermartingale $\SSS$ satisfies $\SSS(\Box)<\epsilon$
  and $\liminf_{n\to\infty}\SSS(\omega^n)\ge0$ for all $\omega\in\Omega$;
  by Lemma~\ref{lem:referee-1}, the supermartingale $\SSS$ is positive.

  \ifFULL\bluebegin
    For our argument to work,
    it is essential that $\xi$ does not take values $\pm\infty$.
    As noticed by the referee,
    it is also essential that $\UpperExpect(\xi)=\LowerExpect(\xi)\in\bbbr$.
  \blueend\fi

  Fix $n$ and $\delta > 0$, and let $E$ be the event that
  \begin{equation*}
    \UpperExpect(\xi\givn\omega^n)
    +
    \UpperExpect(-\xi\givn\omega^n)
    >
    \delta.
  \end{equation*}
  By the definition of conditional upper expectation,
  \begin{equation*}
    \SSS_1(\omega^n) \ge \UpperExpect(\xi\givn\omega^n)
  \quad\text{and}\quad
    \SSS_2(\omega^n) \ge \UpperExpect(-\xi\givn\omega^n).
  \end{equation*}
  So $\SSS(\omega^n)>\delta$ for all $\omega\in E$.
  So, by Lemma~\ref{lem:probability},
  the upper probability of $E$ is less than $\epsilon/\delta$.
  Since $\epsilon$ may be as small as we like for fixed $\delta$,
  this shows that $E$ has upper probability zero.
  Letting $\delta$ range over the strictly positive rational numbers
  and $n$ over $\{0,1,2,\ldots\}$
  and applying the last part of Lemma \ref{lem:referee-4},
  we obtain the statement of the lemma.
\end{proof}

\ifFULL\bluebegin
  The following example shows that the condition that
  $\UpperExpect(\xi)=\LowerExpect(\xi)$
  should be different from $-\infty$ and $\infty$ is essential
  in the statement of Lemma~\ref{lem:determinate}.
  Consider the coin-tossing protocol of Example~\ref{ex:expectation}.
  Let $f:\Omega\to[0,1]$ be a function whose upper integral is 1
  and lower integral is 0 w.r.\ to the uniform probability measure.
  The functions
  $$
    \xi(\omega)
    :=
    \begin{cases}
      f(\omega') & \text{if $\omega=1\omega'$ starts from $1$}\\
      \infty & \text{otherwise}
    \end{cases}
  $$
  (for which $\UpperExpect(\xi)=\LowerExpect(\xi)=\infty$)
  and $-\xi$ can serve as counterexamples.
\blueend\fi

\begin{corollary}
  Suppose $\EEE_1,\EEE_2,\ldots$ are superexpectations.
  Let $\xi:\Omega\to\bbbr$ be a bounded function
  for which $\UpperExpect(\xi)=\LowerExpect(\xi)$.
  Then, almost surely,
  $
    \UpperExpect(\xi\givn\omega^n)
    =
    \LowerExpect(\xi\givn\omega^n)
    \to
    \xi(\omega)
  $
  as $n\to\infty$.
\end{corollary}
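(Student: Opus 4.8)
The plan is to read this off from three facts already in hand: L\'evy's zero-one law (Theorem~\ref{thm:levy}) applied to $\xi$, the same theorem applied to $-\xi$, and the propagation of determinacy to every conditioning situation (Lemma~\ref{lem:determinate}). Since $\xi$ is bounded, both $\xi$ and $-\xi$ take values in $\bbbr\subseteq(-\infty,\infty]$ and are bounded from below, and $\UpperExpect(\xi)=\LowerExpect(\xi)$ is automatically a finite real number, so the hypotheses of Theorem~\ref{thm:levy} (for $\xi$ and for $-\xi$) and of Lemma~\ref{lem:determinate} are all satisfied.

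First I would apply Lemma~\ref{lem:determinate} to $\xi$: it is almost certain that $\UpperExpect(\xi\givn\omega^n)=\LowerExpect(\xi\givn\omega^n)$ for every $n$. Next I would apply Theorem~\ref{thm:levy} to $\xi$ to get that $\liminf_{n\to\infty}\UpperExpect(\xi\givn\omega^n)\ge\xi(\omega)$ holds almost surely, and apply it to $-\xi$ to get that $\liminf_{n\to\infty}\UpperExpect(-\xi\givn\omega^n)\ge-\xi(\omega)$ holds almost surely. Using $\LowerExpect(\xi\givn\omega^n)=-\UpperExpect(-\xi\givn\omega^n)$, this last statement rewrites as $\limsup_{n\to\infty}\LowerExpect(\xi\givn\omega^n)\le\xi(\omega)$ almost surely.

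Then I would intersect these three almost-certain events. Since the complement of an almost certain event is null and, by the last part of Lemma~\ref{lem:referee-4}, a countable union of null events is null, the three statements hold simultaneously for almost all $\omega$. On that full event the determinacy identity lets us chain $\xi(\omega)\le\liminf_{n\to\infty}\UpperExpect(\xi\givn\omega^n)\le\limsup_{n\to\infty}\UpperExpect(\xi\givn\omega^n)=\limsup_{n\to\infty}\LowerExpect(\xi\givn\omega^n)\le\xi(\omega)$, so every inequality is an equality; hence $\lim_{n\to\infty}\UpperExpect(\xi\givn\omega^n)$ exists and equals $\xi(\omega)$, and the same holds for $\LowerExpect(\xi\givn\omega^n)$, which is the assertion of the corollary.

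I do not expect a real obstacle here: the whole content is the careful combination of the two one-sided bounds and the determinacy. The one place to be attentive is the sign flip in passing from $\xi$ to $-\xi$, which converts a $\liminf$ lower bound for $\UpperExpect(-\xi\givn\omega^n)$ into a $\limsup$ upper bound for $\LowerExpect(\xi\givn\omega^n)$, and the fact that finiteness of $\UpperExpect(\xi)=\LowerExpect(\xi)$ (guaranteed by boundedness) is exactly what makes Lemma~\ref{lem:determinate} available, so that the two-sided squeeze actually closes.
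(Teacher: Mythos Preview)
Your proposal is correct and follows essentially the same approach as the paper: apply Theorem~\ref{thm:levy} to $\xi$ and to $-\xi$ to obtain the two one-sided bounds, and combine with the determinacy statement. The paper's written proof is terser---it records only the two applications of Theorem~\ref{thm:levy} and leaves the appeal to Lemma~\ref{lem:determinate} (needed for the equality $\UpperExpect(\xi\givn\omega^n)=\LowerExpect(\xi\givn\omega^n)$) implicit, relying on its placement immediately after that lemma---whereas you spell out the use of Lemma~\ref{lem:determinate} and the intersection of the three almost-certain events via Lemma~\ref{lem:referee-4}.
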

\begin{proof}
  By Theorem \ref{thm:levy},
  \begin{equation*}
    \liminf_{n\to\infty}\UpperExpect(\xi\givn\omega^n)
    \ge
    \xi(\omega)
  \end{equation*}
  for almost all $\omega\in\Omega$
  and (applying the theorem to $-\xi$)
  \begin{equation*}
    \limsup_{n\to\infty}\LowerExpect(\xi\givn\omega^n)
    \le
    \xi(\omega)
  \end{equation*}
  for almost all $\omega\in\Omega$.
\end{proof}

Our definitions (\ref{eq:probability}) make it easy to obtain
the following corollary for events.
\begin{corollary}
  Suppose $\EEE_1,\EEE_2,\ldots$ are superexpectations.
  Let $E$ be an event for which $\UpperProb(E)=\LowerProb(E)$.
  Then, almost surely,
  $
    \UpperProb(E\givn\omega^n)
    =
    \LowerProb(E\givn\omega^n)
    \to
    \III_E
  $ as $n\to\infty$.
\end{corollary}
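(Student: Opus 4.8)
The plan is to obtain this corollary as an immediate specialization of the preceding corollary to the indicator function $\xi:=\III_E$. First I would observe that $\III_E:\Omega\to\bbbr$ is bounded (it takes only the values $0$ and~$1$), so the only hypothesis of the previous corollary that needs checking is $\UpperExpect(\III_E)=\LowerExpect(\III_E)$. But by the definitions~(\ref{eq:probability}) we have $\UpperProb(E)=\UpperExpect(\III_E)$ and $\LowerProb(E)=\LowerExpect(\III_E)$, so this equality is just the assumption $\UpperProb(E)=\LowerProb(E)$ restated; and the standing assumption that $\EEE_1,\EEE_2,\ldots$ are superexpectations is the same in both statements.

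Next I would apply the preceding corollary to $\xi=\III_E$, which gives that, almost surely, $\UpperExpect(\III_E\givn\omega^n)=\LowerExpect(\III_E\givn\omega^n)\to\III_E(\omega)$ as $n\to\infty$. Finally, I would unwind the conditional versions of the definitions~(\ref{eq:probability}), namely $\UpperProb(E\givn\omega^n)=\UpperExpect(\III_E\givn\omega^n)$ and $\LowerProb(E\givn\omega^n)=\LowerExpect(\III_E\givn\omega^n)$, to rewrite the conclusion in the form $\UpperProb(E\givn\omega^n)=\LowerProb(E\givn\omega^n)\to\III_E$ and be done.

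There is essentially no obstacle here; the proof is a two-line bookkeeping exercise in translating between the expectation and probability formulations. The only points requiring a moment's care are that boundedness of $\III_E$ is what lets us invoke the bounded-$\xi$ corollary rather than Lemma~\ref{lem:determinate} directly, and that the determinateness hypothesis $\UpperProb(E)=\LowerProb(E)$ is automatically finite (it lies in $[0,1]$), so the restriction ``$\UpperExpect(\xi)=\LowerExpect(\xi)\in\bbbr$'' in Lemma~\ref{lem:determinate} is satisfied without comment.
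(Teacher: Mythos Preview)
Your proposal is correct and matches the paper's approach: the paper does not even write out a proof, merely remarking that the definitions~(\ref{eq:probability}) make it easy to obtain this corollary from the preceding one for bounded~$\xi$, which is precisely the specialization $\xi:=\III_E$ that you carry out.
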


\section{An implication for the foundations of game-theoretic probability theory}
\label{sec:implication}

Let  $\xi:\Omega\to\bbbr$ be a bounded function,
and let $s:=\Box$.
We will obtain an equivalent definition of the upper expectation
$\UpperExpect(\xi\givn s)=\UpperExpect(\xi)$
if we replace the phrase ``for all $\omega\in\Gamma(s)$''
in (\ref{eq:upper-expectation})
by ``for almost all $\omega\in\Omega$''.
\ifFULL\bluebegin
  (Notice that the restriction to the bounded below supermartingales
  in the definition of upper probability
  would be essential for this statement to be true
  if we did not assume $\xi$ bounded below.)
\blueend\fi
It turns out that if we do so,
the infimum in (\ref{eq:upper-expectation}) becomes attained;
namely,
it is attained by the supermartingale
$\SSS(s):=\UpperExpect(\xi\givn s)$, $s\in\XXX^*$.
(This fact is the key technical tool used in \cite{vovk/shen:2010-local}.)
In view of Theorem \ref{thm:levy},
to prove this statement it suffices to check that
$\SSS$ is indeed a supermartingale
(it will be bounded below by Lemma~\ref{lem:referee-1}).
We will prove a slightly stronger statement.
\begin{lemma}\label{lem:martingale}
  Let $\xi:\Omega\to\bbbrbar$.
  Then $\SSS:=\UpperExpect(\xi\givn\cdot)$ is a supermartingale.
\end{lemma}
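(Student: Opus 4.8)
The plan is to verify directly that $\SSS:=\UpperExpect(\xi\givn\cdot)$, which maps $\XXX^*$ into $\bbbrbar$, satisfies $\EEE_n(\SSS(s\,\cdot))=\SSS(s)$ for every $n\in\bbbn$ and every $s\in\XXX^{n-1}$; since this in particular yields $\EEE_n(\SSS(s\,\cdot))\le\SSS(s)$, it shows along the way that $\SSS$ is a supermartingale, and hence a martingale. Thus the work splits into the two inequalities $\EEE_n(\SSS(s\,\cdot))\le\SSS(s)$ and $\EEE_n(\SSS(s\,\cdot))\ge\SSS(s)$. Fix $n$ and $s\in\XXX^{n-1}$ and abbreviate $g:=\SSS(s\,\cdot)$, so that $g(x)=\UpperExpect(\xi\givn sx)$.

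The inequality $\EEE_n(g)\le\SSS(s)$ is the easy half. Given $\epsilon>0$, and assuming $\SSS(s)<\infty$ (otherwise there is nothing to prove), pick a bounded below supermartingale $\TTT$ with $\TTT(s)<\UpperExpect(\xi\givn s)+\epsilon$ and $\liminf_m\TTT(\omega^m)\ge\xi(\omega)$ for all $\omega\in\Gamma(s)$. For each $x\in\XXX$ the same $\TTT$ is a bounded below supermartingale satisfying $\liminf_m\TTT(\omega^m)\ge\xi(\omega)$ for all $\omega\in\Gamma(sx)$, hence admissible in the definition of $\UpperExpect(\xi\givn sx)$, so $g(x)\le\TTT(sx)$; that is, $g\le\TTT(s\,\cdot)$ pointwise. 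By Axiom~\ref{ax:order} for $\EEE_n$ and the supermartingale property of $\TTT$ at $s$, $\EEE_n(g)\le\EEE_n(\TTT(s\,\cdot))\le\TTT(s)<\UpperExpect(\xi\givn s)+\epsilon$, and letting $\epsilon\downarrow0$ finishes this direction.

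For the reverse inequality $\SSS(s)\le\EEE_n(g)$ I would glue near-optimal witnesses from the children; the clean case is when $\xi$ is bounded below on $\Gamma(s)$, for then Lemma~\ref{lem:referee-1} already forces $g(x)\ge\inf_{\omega\in\Gamma(sx)}\xi(\omega)>-\infty$. Assume also $\EEE_n(g)<\infty$. For each $x\in\XXX$ with $g(x)<\infty$ choose a bounded below supermartingale $\SSS^x$ with $\SSS^x(sx)\le g(x)+\epsilon$ and $\liminf_m\SSS^x(\omega^m)\ge\xi(\omega)$ for all $\omega\in\Gamma(sx)$, extended by $+\infty$ off the situations extending $sx$ (as in Lemma~\ref{lem:referee-2}); when $g(x)=\infty$ take $\SSS^x\equiv\infty$. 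Define $\SSS$ to coincide with $\SSS^x$ on the situations extending $sx$, to equal $+\infty$ on the situations that are proper prefixes of $s$ or incomparable with $s$, and to take the value $\EEE_n(x\mapsto\SSS^x(sx))$ at $s$. A routine check over the kinds of situations involved shows $\SSS$ is a supermartingale; by construction $\liminf_m\SSS(\omega^m)\ge\xi(\omega)$ for all $\omega\in\Gamma(s)$; $\SSS$ is bounded below because Lemma~\ref{lem:referee-1} gives $\SSS(t)\ge\inf_{\omega\in\Gamma(s)}\xi(\omega)$ for every situation $t$; and $\SSS(s)=\EEE_n(x\mapsto\SSS^x(sx))\le\EEE_n(g+\epsilon)=\EEE_n(g)+\epsilon$ by Axiom~\ref{ax:order} and~(\ref{eq:constant}). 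Hence $\SSS$ is admissible in~(\ref{eq:upper-expectation}), so $\UpperExpect(\xi\givn s)\le\EEE_n(g)+\epsilon$, and $\epsilon\downarrow0$ settles this case.

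The main obstacle is removing the restriction that $\xi$ be bounded below on $\Gamma(s)$, i.e., treating a general $\xi:\Omega\to\bbbrbar$; since $\xi$ is bounded in the application in Section~\ref{sec:implication}, this refinement is the one genuinely delicate point. I would reduce to the clean case by truncating: setting $\xi^{(M)}:=\max(\xi,-M)$, noting (again via Lemma~\ref{lem:referee-1}) that in the definition of $\UpperExpect(\xi^{(M)}\givn t)$ one may restrict to supermartingales bounded below by $-M$, so that such a supermartingale dominating $\xi^{(M)}$ on $\Gamma(s)$ automatically dominates $\xi$ there, and observing that $\UpperExpect(\xi\givn\cdot)=\inf_M\UpperExpect(\xi^{(M)}\givn\cdot)$ pointwise. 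The identity for each bounded below $\xi^{(M)}$ is what has just been proved; the remaining task is to pass $M\to\infty$ through $\EEE_n$, i.e., to compare $\EEE_n(\inf_M\UpperExpect(\xi^{(M)}\givn s\,\cdot))$ with $\inf_M\EEE_n(\UpperExpect(\xi^{(M)}\givn s\,\cdot))$, and I expect this interchange to require exploiting the particular form of the decreasing sequence $\UpperExpect(\xi^{(M)}\givn s\,\cdot)$ rather than any general continuity property of $\EEE_n$.
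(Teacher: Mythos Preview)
Your two halves are exactly the paper's proof. For the supermartingale inequality the paper also picks a single near-optimal witness $\TTT$ at $s$ (via Lemma~\ref{lem:referee-2}), observes $\SSS(sx)\le\TTT(sx)$, and applies Axiom~\ref{ax:order}. For the reverse inequality the paper also glues: it chooses, for each $x$, a bounded-below supermartingale $\SSS_{sx}$ with $\SSS_{sx}(sx)\le\SSS(sx)+\epsilon$ dominating $\xi$ on $\Gamma(sx)$, pastes these together with value $\EEE_n(x\mapsto\SSS_{sx}(sx))$ at $s$ and $\infty$ elsewhere, and reads off $\SSS(s)\le\EEE_n(\SSS(s\,\cdot))+\epsilon$.

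The difference is that the paper does this in one stroke for arbitrary $\xi$, simply declaring the glued object to be ``a bounded below supermartingale'' without justification, whereas you isolate the case where $\xi$ is bounded below on $\Gamma(s)$ and use Lemma~\ref{lem:referee-1} to secure a uniform lower bound on all the pieces. Your caution is well founded: when $\inf_x\UpperExpect(\xi\givn sx)=-\infty$, the constraint $\SSS_{sx}(sx)\le\SSS(sx)+\epsilon$ forces the individual lower bounds to diverge, and the glued $\SSS_s$ need not be bounded below, so the paper's one-line claim is really only airtight in the bounded-below case you handle. Your truncation $\xi^{(M)}=\max(\xi,-M)$ is the natural fix, but as you correctly flag, passing $\inf_M$ through $\EEE_n$ is not granted by Axioms~\ref{ax:order}--\ref{ax:norm}; the paper does not address this either. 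Since every application of the lemma in the paper (Section~\ref{sec:implication} and the proof of Corollary~\ref{cor:kolmogorov-1}) has $\xi$ bounded, your completed bounded-below case already covers everything the paper needs.
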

\begin{proof}
  Let $n\in\bbbn$, $s\in\XXX^{n-1}$, and $r>\SSS(s)$.
  By Lemma~\ref{lem:referee-2},
  there exists a bounded below supermartingale $\TTT$
  such that $\TTT(s)<r$ and $\liminf_n\TTT(\omega^n)\ge\xi(\omega)$
  for all $\omega\in\Omega$.
  Then we have $\SSS(sx)\le\TTT(sx)$ for all $x\in\XXX$,
  and so we have
  $$
    \EEE_n(\SSS(s\,\cdot))
    \le
    \EEE_n(\TTT(s\,\cdot))
    \le
    \TTT(s)
    <
    r.
  $$
  Letting $r\to\SSS(s)$ (if $\SSS(s)<\infty$)
  shows that $\EEE_n(\SSS(s\,\cdot))\le\SSS(s)$,
  which proves the supermartingale property.
  \ifFULL\bluebegin

    \textbf{This is our argument for $\SSS$ being a martingale,
    which does not seem to work (as noticed by the JTP referee in his second review).}
    It remains to prove
    $\SSS(s)\le\EEE_n(\SSS(s\,\cdot))$
    when $n\in\bbbn$ and $s\in\XXX^{n-1}$.
    Let $\epsilon>0$.
    For each $x\in\XXX$ choose a bounded below supermartingale $\SSS_{sx}$
    satisfying $f(sx):=\SSS_{sx}(sx)\le\SSS(sx)+\epsilon$
    and $\liminf_n\SSS_{sx}(\omega^n)\ge\xi(\omega)$ for all $\omega\in\Gamma(sx)$.
    Define a bounded below supermartingale $\SSS_s$ by
    $$
      \SSS_s(t)
      :=
      \begin{cases}
        \SSS_{sx}(t) & \text{if $sx\subseteq t$ for some $x\in\XXX$}\\
        \EEE_n(f(s\,\cdot)) & \text{if $t=s$}\\
        \infty & \text{otherwise}.
      \end{cases}
    $$
    Since $\SSS_s(s)\le\EEE_n(\SSS(s\,\cdot))+\epsilon$
    and $\liminf_n\SSS_{s}(\omega^n)\ge\xi(\omega)$ for all $\omega\in\Gamma(s)$,
    we have $\SSS(s)\le\EEE_n(\SSS(s\,\cdot))+\epsilon$.
    Let $\epsilon\to0$.
    \textbf{This argument requires a property like
    $\EEE(\max(f,c))\to\EEE(f)$ as $c\to-\infty$
    (a version of continuity of probability $\EEE(\III_E)$ at $\emptyset$).
    It is an open problem to construct $\EEE$ that violate this property.
    (This property is satisfied automatically, of course,
    if we follow the procedure in the remark on p.~\pageref{rem:no-loss}.)
    The less important problem with our argument
    is that it assumes $\SSS(sx)>-\infty$.}
  \blueend\fi
\end{proof}

The following simple example shows that replacing ``for all $\omega\in\Gamma(s)$''
in (\ref{eq:upper-expectation})
by ``for almost all $\omega\in\Omega$''
is essential if we want the infimum to be attained.
\begin{example}\label{ex:not-attained}
  Consider the coin-tossing protocol,
  as in Example \ref{ex:expectation}.
  Let $A$ be the set of all $\omega\in\Omega$
  containing only finitely many $1$s,
  let $\xi:=\III_A$,
  and let $s:=\Box$.
  The infimum in (\ref{eq:upper-expectation}) is not attained:
  there exist no supermartingale $\SSS$
  satisfying $\SSS(\Box)=\UpperExpect(\xi)$
  and $\liminf_{n\to\infty}\SSS(\omega^n)\ge\xi(\omega)$
  for all $\omega\in\Omega$.
\end{example}
\begin{proof}
  By Lemma~\ref{lem:referee-1}, such an $\SSS$ would be positive.
  Let $L$ be the uniform probability measure on $\{0,1\}^{\infty}$
  equipped with the Borel $\sigma$-algebra.
  Since $\UpperProb(E)=L(E)$ for all Borel sets in $\{0,1\}^{\infty}$
  (\cite{shafer/vovk:2001}, Proposition~8.5),
  we would have $\SSS(\Box)=\UpperExpect(\xi)=0$.
  A positive supermartingale with initial value $0$
  in the coin-tossing protocol must be a constant.
\end{proof}

\ifFULL\bluebegin
  We have shown that the infimum in (\ref{eq:upper-expectation}),
  with ``for all $\omega$'' replaced by ``for almost all $\omega$'',
  is attained, under some conditions.
  The next step would be to show
  that there exists the smallest supermartingale
  in the class $\mathbf{M}_{\xi}$ of all bounded below supermartingales $\SSS$
  satisfying $\liminf_{n\to\infty}\SSS_n(\omega)\ge\xi(\omega)$
  for almost all $\omega\in\Omega$.
  Namely,
  that $\SSS_n(\omega):=\UpperExpect(\xi\givn\omega^n)$
  is the smallest supermartingale in $\mathbf{M}_{\xi}$.

  In Example \ref{ex:not-attained},
  there does not exist a smallest element
  in the class of all supermartingales $\SSS$
  satisfying $\liminf_{n\to\infty}\SSS_n(\omega)\ge\xi(\omega)$
  for all $\omega\in\Omega$.
  Indeed, if a smallest supermartingale exists,
  it must be $\SSS_n(\omega):=\UpperExpect(\xi\givn\omega^n)=0$.
  This supermartingale, however,
  fails to satisfy
  $\liminf_{n\to\infty}\SSS_n(\omega)\ge\xi(\omega)$
  for $\omega\in E$.
\blueend\fi

\section{More explicit zero-one laws}
\label{sec:explicit}

In this section we will deduce two corollaries
from our game-theoretic version of L\'evy's zero-one law:
Kolmogorov's zero-one law and the ergodicity of Bernoulli shifts.
Both corollaries were proved in \cite{\Takemura} directly.
These two results are more general
than the corresponding measure-theoretic results;
see \cite{shafer/vovk:2001}, Section 8.1,
for relations between measure-theoretic results
and their game-theoretic counterparts.

For each $N\in\bbbn$,
let $\FFF_N$ be the set of all events $E$
that are properties of $(\omega_{N},\omega_{N+1},\ldots)$ only
(i.e., $E$ such that, for all $\omega,\omega'\in\Omega$,
$\omega'\in E$ whenever $\omega\in E$ and $\omega'_n=\omega_n$ for all $n\ge N$).
By a \emph{tail event} we mean an element of $\cap_N\FFF_N$.
In other words, an event $E\subseteq\Omega$ is a tail event
if any sequence in $\Omega$ that agrees from some point onwards
with a sequence in $E$ is also in $E$.

\subsection*{Kolmogorov's zero-one law}
\ifWP
  \addcontentsline{toc}{subsection}%
  {Kolmogorov's zero-one law}
\fi

L\'evy's zero-one law immediately implies the following game-theoretic version
of Kolmogorov's zero-one law.
\begin{corollary}[\cite{\Takemura}]\label{cor:kolmogorov-1}
  For all tail events $E\subseteq\Omega$,
  $\UpperProb(E)\in\{0,1\}$.
\end{corollary}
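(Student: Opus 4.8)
The plan is to feed Corollary~\ref{cor:levy} through the following property of tail events: \emph{for a tail event $E$ and any situation $s$, $\UpperProb(E\givn s)\le\UpperProb(E)$.} Granting this, the statement follows quickly. If $\UpperProb(E)=0$ we are done, so suppose $\UpperProb(E)>0$; I claim $\UpperProb(E)=1$. Unpacking the definition of ``almost all $\omega\in E$'', Corollary~\ref{cor:levy} says that $E\setminus\{\omega:\UpperProb(E\givn\omega^n)\to1\}$ is null. Since $\UpperProb(E)>0$, monotonicity (Axiom~\ref{ax:order}) forbids $E$ from being null, so there is an $\omega^*\in E$ with $\UpperProb(E\givn(\omega^*)^n)\to1$. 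By the displayed property, $\UpperProb(E\givn(\omega^*)^n)\le\UpperProb(E)$ for every $n$, hence $1=\lim_n\UpperProb(E\givn(\omega^*)^n)\le\UpperProb(E)$; and $\UpperProb(E)\le1$ always (apply Axioms~\ref{ax:order} and~\ref{ax:norm} to $\III_E\le1$). Thus $\UpperProb(E)=1$.

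It remains to prove the displayed property. Fix a tail event $E$, a situation $s$ of length $k$, and $\epsilon>0$, and pick a positive supermartingale $\TTT$ with $\TTT(\Box)<\UpperProb(E)+\epsilon$ and $\liminf_n\TTT(\omega^n)\ge1$ for all $\omega\in E$. Iterating the coherence property~(\ref{eq:coherence}) exactly as in the proof of Lemma~\ref{lem:referee-1}, I obtain a situation $t$ with $\lvert t\rvert=k$ and $\TTT(t)<\UpperProb(E)+\epsilon$. Now transplant $\TTT$ below $t$ onto $\Gamma(s)$: set $\SSS(sv):=\TTT(tv)$ for all $v\in\XXX^*$ (legitimate since $\lvert t\rvert=\lvert s\rvert$, so the relevant outer probability contents line up) and $\SSS(u):=\infty$ whenever $u\not\supseteq s$. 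A direct check using Axioms~\ref{ax:order} and~\ref{ax:norm} shows $\SSS$ is a supermartingale (automatically positive), with $\SSS(s)=\TTT(t)<\UpperProb(E)+\epsilon$. Finally, if $\omega\in E\cap\Gamma(s)$ and $w$ denotes the sequence obtained from $\omega$ by deleting its first $k$ terms, then $tw$ agrees with $\omega$ from term $k+1$ onwards, so $tw\in E$ because $E$ is a tail event; hence $\liminf_n\SSS(\omega^n)=\liminf_n\TTT((tw)^n)\ge1$. Therefore $\UpperProb(E\givn s)\le\SSS(s)<\UpperProb(E)+\epsilon$, and letting $\epsilon\downarrow0$ completes the argument.

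The only real obstacle is this transplantation step. In the measure-theoretic setting a tail event is independent of the first $n$ coordinates, so $\Prob(E\givn\omega^n)=\Prob(E)$ is immediate; here there is no probability measure, and the contents $\EEE_1,\EEE_2,\dots$ at successive rounds may be unrelated, so one cannot simply ``restart'' Skeptic's strategy at~$s$. Coherence saves the day: it forces Skeptic's capital to return (up to $\epsilon$) to its initial level along \emph{some} branch of length~$k$, and the tail-event hypothesis makes it harmless to relabel that branch as~$s$. Note that this proof, like Theorem~\ref{thm:levy} on which Corollary~\ref{cor:levy} rests, never invokes Axiom~\ref{ax:countable}, so the corollary holds for arbitrary outer probability contents.
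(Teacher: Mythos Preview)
Your proof is correct and follows the same overall architecture as the paper's: both use the ``transplantation'' $\SSS(sv):=\TTT(tv)$ between situations of equal length to compare $\UpperProb(E\givn s)$ across the tree, and then invoke Corollary~\ref{cor:levy}. The one genuine difference is in how you link the conditional to the unconditional upper probability. The paper first shows that $\UpperProb(E\givn s)$ is constant over all $s$ of a fixed length, and then appeals to Lemma~\ref{lem:martingale} (the martingale property of $\UpperExpect(\III_E\givn\cdot)$) together with Axiom~\ref{ax:norm} to conclude $\UpperProb(E\givn\omega^n)=\UpperProb(E)$. You instead use the coherence argument of Lemma~\ref{lem:referee-1} to locate a single situation $t$ of length $k$ with $\TTT(t)<\UpperProb(E)+\epsilon$, and transplant from there; this yields only the inequality $\UpperProb(E\givn s)\le\UpperProb(E)$, but that is all Corollary~\ref{cor:levy} needs. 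Your route is slightly more elementary in that it sidesteps Lemma~\ref{lem:martingale}, at the price of losing the equality $\UpperProb(E\givn\omega^n)=\UpperProb(E)$, which the paper's argument delivers as a by-product.
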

\begin{proof}
  First we will check that,
  for each $N\in\bbbn$ and each $E\in\FFF_{N}$,
  $\UpperProb(E\givn s)$
  does not depend on $s\in\XXX^{N-1}$.
  Indeed, let $s,t\in\XXX^{N-1}$
  and $\UpperProb(E\givn s)<r$.
  Choose a bounded below supermartingale $\SSS$ such that
  $\SSS(s)<r$ and $\liminf_n\SSS(\omega^n)\ge\III_E(\omega)$
  for all $\omega\in\Gamma(s)$.
  We will write $ab$ for the concatenation of two situations $a\in\XXX^*$ and $b\in\XXX^*$,
  and $a\omega$ for the concatenation of $a\in\XXX^*$ and $\omega\in\Omega$.
  The supermartingale
  $$
    \SSS'(u)
    :=
    \begin{cases}
      \SSS(sv) & \text{if $u=tv$ for some (uniquely determined) $v\in\XXX^*$}\\
      \infty & \text{otherwise}
    \end{cases}
  $$
  witnesses that $\UpperProb(E\givn t)<r$,
  in the sense that $\SSS'(t)=\SSS(s)<r$
  and, for all $\omega\in\Omega$,
  $$
    \liminf_n \SSS'(t\omega^n)
    =
    \liminf_n \SSS(s\omega^n)
    \ge
    \III_E(s\omega)
    =
    \III_E(t\omega).
  $$
  Since this is true for all $s,t\in\XXX^{N-1}$,
  $\UpperProb(E\givn s)$ cannot depend on $s\in\XXX^{N-1}$.

  By Lemma~\ref{lem:martingale} and Axiom~\ref{ax:norm},
  this implies $\UpperProb(E\givn\omega^{N-1})=\UpperProb(E)$
  for $N\in\bbbn$, $E\in\FFF_N$, and $\omega\in\Omega$.
  By Corollary~\ref{cor:levy},
  for $E\in\FFF$ we have $\UpperProb(E)=1$ for almost all $\omega\in E$,
  which is equivalent to $\UpperProb(E)\in\{0,1\}$.
\end{proof}

We say that an event $E$ is \emph{fully unprobabilized}
if $\LowerProb(E)=0$ and $\UpperProb(E)=1$.
Since complements of tail events are also tail events,
we obtain the following corollary to Corollary \ref{cor:kolmogorov-1}.
\begin{corollary}[\cite{\Takemura}]\label{cor:kolmogorov-2}
  If $E\subseteq\Omega$ is a tail event,
  then $E$ is almost certain, almost impossible,
  or fully unprobabilized.
\end{corollary}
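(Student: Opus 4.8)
The plan is to obtain the statement by a short case analysis resting on Corollary~\ref{cor:kolmogorov-1}, together with two elementary facts: the complement of a tail event is again a tail event (each $\FFF_N$ is closed under complementation, hence so is $\cap_N\FFF_N$), and the identity $\LowerProb(E)=1-\UpperProb(E^c)$ established earlier (the special case $s=\Box$ of the lemma relating lower and upper probability).

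First I would record that, for a tail event $E$, both $E$ and $E^c$ are tail events, so Corollary~\ref{cor:kolmogorov-1} gives $\UpperProb(E)\in\{0,1\}$ and $\UpperProb(E^c)\in\{0,1\}$. Then I would split according to the value of $\UpperProb(E)$. If $\UpperProb(E)=0$, then $E$ is almost impossible by definition, and we are done. If $\UpperProb(E)=1$, I would look at $\UpperProb(E^c)$: when $\UpperProb(E^c)=0$, the identity yields $\LowerProb(E)=1$, i.e.\ $E$ is almost certain; when $\UpperProb(E^c)=1$, the identity yields $\LowerProb(E)=0$, which together with $\UpperProb(E)=1$ says precisely that $E$ is fully unprobabilized. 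These cases exhaust all possibilities for the pair $(\UpperProb(E),\UpperProb(E^c))\in\{0,1\}^2$.

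There is essentially no obstacle: all the real content lies in Corollary~\ref{cor:kolmogorov-1} (and ultimately in Theorem~\ref{thm:levy}), and the present statement is pure bookkeeping. The only point worth stating explicitly in the write-up is that $\cap_N\FFF_N$ is closed under complements, which is immediate from the definition of $\FFF_N$ as the events depending only on $(\omega_N,\omega_{N+1},\dots)$.
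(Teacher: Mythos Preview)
Your proposal is correct and matches the paper's own justification: the paper simply remarks, just before the corollary, that ``since complements of tail events are also tail events, we obtain the following corollary to Corollary~\ref{cor:kolmogorov-1},'' and your case analysis is exactly the natural unpacking of that sentence. There is nothing to add.
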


\subsection*{Ergodicity of Bernoulli shifts}
\ifWP
  \addcontentsline{toc}{subsection}%
  {Ergodicity of Bernoulli shifts}
\fi

In this subsection we consider a special case of Protocol~1
where $\EEE_1=\EEE_2=\cdots$.
We write $\theta$ for the shift operator,
which deletes the first element from a sequence in $\XXX^{\infty}$:
\begin{equation*}
  \theta: x_1 x_2 x_3 \ldots \mapsto x_2 x_3 \mathenddots
\end{equation*}
We call an event $E\subseteq\Omega$ \emph{weakly invariant}
if $\theta E \subseteq E$.
In accordance with standard terminology,
an event $E$ is \emph{invariant} if $E=\theta^{-1}E$.
\begin{lemma}\label{lem:weakly-invariant}
  An event $E$ is invariant if and only if both $E$ and $E^c$ are weakly invariant.
\end{lemma}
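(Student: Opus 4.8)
The plan is to unwind the two definitions and verify the two set inclusions that together make up $E=\theta^{-1}E$, paying attention to the fact that $\theta$ is surjective but, when $\lvert\XXX\rvert>1$, not injective — this is what forces the statement to require weak invariance of both $E$ and $E^c$.

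First I would dispatch the easy direction. Suppose $E=\theta^{-1}E$. If $\omega\in E$, then by definition of $\theta^{-1}E$ we have $\theta\omega\in E$; hence $\theta E\subseteq E$, so $E$ is weakly invariant. Moreover $\theta^{-1}(E^c)=(\theta^{-1}E)^c=E^c$, so the same argument applied to $E^c$ gives $\theta E^c\subseteq E^c$, i.e.\ $E^c$ is weakly invariant as well.

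For the converse, assume $\theta E\subseteq E$ and $\theta E^c\subseteq E^c$. The inclusion $E\subseteq\theta^{-1}E$ is immediate: if $\omega\in E$ then $\theta\omega\in\theta E\subseteq E$, so $\omega\in\theta^{-1}E$. For the reverse inclusion $\theta^{-1}E\subseteq E$, take $\omega\in\theta^{-1}E$, i.e.\ $\theta\omega\in E$; were $\omega$ in $E^c$, we would get $\theta\omega\in\theta E^c\subseteq E^c$, contradicting $\theta\omega\in E$, so $\omega\in E$. Combining the two inclusions yields $E=\theta^{-1}E$.

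The only point requiring any care — and the reason weak invariance of $E$ alone is not enough — is that last step: to move backwards through the non-injective map $\theta$ one needs to know that a fiber $\theta^{-1}\{\theta\omega\}$ cannot straddle $E$ and $E^c$, which is exactly what $\theta E^c\subseteq E^c$ provides. Beyond this observation the argument is a routine application of the definitions, so I do not anticipate any genuine obstacle.
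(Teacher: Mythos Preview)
Your proof is correct and follows essentially the same route as the paper: both directions use that $\theta A\subseteq A$ is equivalent to $A\subseteq\theta^{-1}A$, and the converse direction hinges on the fact that $\theta^{-1}E$ and $\theta^{-1}E^c$ partition $\Omega$ (the paper states this partition argument explicitly, you phrase it as a contradiction, but it is the same step).
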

\begin{proof}
  We will give the simple argument from \cite{\Takemura}.
  If $E$ is invariant, then $E^c$ is also invariant,
  because the inverse map commutes with complementation.
  Hence in this case both $E$ and $E^c$ are weakly invariant.

  Conversely suppose that $\theta E \subseteq E$ and $\theta E^c \subseteq E^c$.
  The first inclusion is equivalent to $E\subseteq\theta^{-1}E$
  and the second is equivalent to $E^c\subseteq\theta^{-1}E^c$.
  Since the right-hand sides of the last two inclusions
  are disjoint,
  these inclusions are in fact equalities.
\end{proof}

The following corollary asserts the ergodicity of Bernoulli shifts.
\begin{corollary}[\cite{\Takemura}]
  \label{cor:weakly-invariant}
  Suppose $\EEE_1=\EEE_2=\cdots$.
  For all weakly invariant events $E$,
  $\UpperProb(E)\in\{0,1\}$.
\end{corollary}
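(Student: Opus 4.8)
The plan is to exploit the stationarity of the protocol---all the $\EEE_n$ coincide, say with $\EEE$---to shift supermartingales by a fixed prefix, and then to feed the resulting inequality into Corollary~\ref{cor:levy}. Throughout I write $ab$ for the concatenation of situations and $a\omega$ for the concatenation of a situation $a$ with an infinite sequence $\omega$, as in the proof of Corollary~\ref{cor:kolmogorov-1}.

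\textbf{Step 1.} First I would show that $\UpperProb(E\givn s)\le\UpperProb(E)$ for every situation $s$. Fix $s$ and let $\TTT$ be any bounded below supermartingale with $\liminf_n\TTT(\omega^n)\ge\III_E(\omega)$ for all $\omega\in\Omega$; by Lemma~\ref{lem:referee-2} the infimum of $\TTT(\Box)$ over such $\TTT$ is $\UpperProb(E)$. Define $\SSS:\XXX^*\to\bbbrbar$ by $\SSS(st):=\TTT(t)$ for $t\in\XXX^*$ and $\SSS(u):=\infty$ when $s\not\subseteq u$. Because every $\EEE_n$ equals $\EEE$, one has $\EEE_{\lvert st\rvert+1}(\SSS(st\,\cdot))=\EEE(\TTT(t\,\cdot))\le\TTT(t)=\SSS(st)$ for all $t$, and the supermartingale inequality is trivial at situations not above $s$ (where $\SSS=\infty$); hence $\SSS$ is a bounded below supermartingale with $\SSS(s)=\TTT(\Box)$. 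For $\omega=s\omega'\in\Gamma(s)$ we get $\liminf_n\SSS(\omega^n)=\liminf_k\TTT(\omega'^k)\ge\III_E(\omega')$; and since $\theta E\subseteq E$ implies $\theta^{\lvert s\rvert}E\subseteq E$, the membership $s\omega'\in E$ forces $\omega'=\theta^{\lvert s\rvert}(s\omega')\in E$, i.e.\ $\III_E(s\omega')\le\III_E(\omega')$. So $\liminf_n\SSS(\omega^n)\ge\III_E(\omega)$ for all $\omega\in\Gamma(s)$, meaning $\SSS$ witnesses $\UpperProb(E\givn s)\le\TTT(\Box)$; taking the infimum over $\TTT$ gives the claim.

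\textbf{Step 2.} Then, since $\UpperProb(E\givn\omega^n)\le\UpperProb(E)$ for all $n$ and all $\omega$, I would invoke Corollary~\ref{cor:levy}: the event $B:=\{\omega\in E:\UpperProb(E\givn\omega^n)\not\to1\}$ satisfies $\UpperProb(B)=0$. If $\UpperProb(E)<1$, then every $\omega\in E$ has $\UpperProb(E\givn\omega^n)\le\UpperProb(E)<1$ for all $n$, so $B=E$ and hence $\UpperProb(E)=0$. Since $\UpperProb(E)\le1$ always (Lemma~\ref{lem:referee-3} with Axioms~\ref{ax:order} and~\ref{ax:norm}), this yields $\UpperProb(E)\in\{0,1\}$.

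The main obstacle is Step~1: one must verify that the shifted process $\SSS$ really is a supermartingale, and this is the single place where the hypothesis $\EEE_1=\EEE_2=\cdots$ is essential---with distinct $\EEE_n$ the shift changes the round index and destroys the supermartingale property. One must also be careful that weak invariance is used in the right direction: $\theta E\subseteq E$ gives exactly $\III_E(s\omega')\le\III_E(\omega')$, which is what makes the lower limit of $\SSS$ dominate $\III_E(\omega)$ on $\Gamma(s)$; the opposite inclusion $E\subseteq\theta^{-1}E$ is not needed (and indeed fails for a generic weakly invariant $E$). I note that this argument uses neither Axiom~\ref{ax:countable} nor the assumption that the $\EEE_n$ are superexpectations, since Corollary~\ref{cor:levy} holds at that level of generality.
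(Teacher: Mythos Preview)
Your proof is correct and essentially identical to the paper's: both shift a bounded-below supermartingale by the prefix $s$ (using $\EEE_1=\EEE_2=\cdots$) to obtain $\UpperProb(E\givn s)\le\UpperProb(E)$, and then invoke Corollary~\ref{cor:levy} to conclude. One small slip in your closing commentary: the inclusion $E\subseteq\theta^{-1}E$ is \emph{equivalent} to $\theta E\subseteq E$ (both say $\omega\in E\Rightarrow\theta\omega\in E$), so it is precisely weak invariance and is used; you presumably meant the reverse inclusion $\theta^{-1}E\subseteq E$.
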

\begin{proof}
  For any weakly invariant event $E$ and any situation $s$,
  $\UpperProb(E\givn s)\le\UpperProb(E)$.
  Indeed, let $\UpperProb(E)<r$.
  Choose a bounded below supermartingale $\SSS$
  such that $\SSS(\Box)<r$ and
  $\liminf_n\SSS(\omega^n)\ge\III_E(\omega)$ for all $\omega\in\Omega$.
  Define a new bounded below supermartingale $\SSS'$ by
  $\SSS'(st):=\SSS(t)$ for all $t\in\XXX^*$
  and $\SSS'(t):=\infty$ for all $t\in\XXX^*$ such that $s\not\subseteq t$.
  This supermartingale witnesses that $\UpperProb(E\givn s)<r$,
  in the sense that
  $\SSS'(s)=\SSS(\Box)<r$ and
  $$
    \liminf_n \SSS'(s\omega^n)
    =
    \liminf_n \SSS(\omega^n)
    \ge
    \III_E(\omega)
    \ge
    \III_E(s\omega),
    \quad
    \forall\omega\in\Omega,
  $$
  the last inequality following from $s\omega\in E\Rightarrow\omega\in E$.

  Therefore, we have $\UpperProb(E\givn\omega^n)\le\UpperProb(E)$
  when $E$ is weakly invariant.
  By Corollary~\ref{cor:levy},
  for almost all $\omega\in E$ it is true that
  $\UpperProb(E)=1$.
  Therefore, $\UpperProb(E)$ is either $0$ or $1$.
\end{proof}

In view of Lemma \ref{lem:weakly-invariant}
we obtain the following corollary to Corollary \ref{cor:weakly-invariant}.
\begin{corollary}[\cite{\Takemura}]\label{cor:invariant}
  Suppose $\EEE_1=\EEE_2=\cdots$.
  If $E$ is an invariant event,
  then $E$ is almost certain,
  almost impossible,
  or fully unprobabilized.
\end{corollary}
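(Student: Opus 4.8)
The plan is to deduce this directly from Corollary~\ref{cor:weakly-invariant}, Lemma~\ref{lem:weakly-invariant}, and the duality $\LowerProb(E)=1-\UpperProb(E^c)$ established earlier (the unconditional, $s=\Box$, case of the lemma relating $\LowerProb$ to $\UpperProb$). First I would note that, since $\EEE_1=\EEE_2=\cdots$ and $E$ is invariant, Lemma~\ref{lem:weakly-invariant} tells us that both $E$ and $E^c$ are weakly invariant, so Corollary~\ref{cor:weakly-invariant} applies to each of them and yields $\UpperProb(E)\in\{0,1\}$ and $\UpperProb(E^c)\in\{0,1\}$.

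Next I would convert the statement about $E^c$ into one about the lower probability of $E$: from $\LowerProb(E)=1-\UpperProb(E^c)$ we get $\LowerProb(E)\in\{0,1\}$ as well. Combining this with Corollary~\ref{cor:coherence}, which gives $\LowerProb(E)\le\UpperProb(E)$, the only possibilities for the pair $(\LowerProb(E),\UpperProb(E))$ are $(0,0)$, $(1,1)$, and $(0,1)$ (the pair $(1,0)$ being excluded by coherence).

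Finally I would read these three cases off in the terminology of Section~\ref{sec:expectation}: $(0,0)$ (indeed just $\UpperProb(E)=0$) says $E$ is almost impossible; $(1,1)$ (indeed just $\LowerProb(E)=1$) says $E$ is almost certain; and $(0,1)$ is precisely the definition of $E$ being fully unprobabilized. This is really just bookkeeping on top of Corollary~\ref{cor:weakly-invariant}, which supplies all the mathematical content, so I do not expect any genuine obstacle; the only point to watch is that every probability here is the unconditional one ($s=\Box$), and that it is exactly the $\LowerProb$--$\UpperProb$ duality which lets the ergodicity statement for $\UpperProb$ alone also pin down $\LowerProb$.
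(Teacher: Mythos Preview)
Your proposal is correct and follows exactly the route the paper indicates: the paper merely states that Corollary~\ref{cor:invariant} follows from Corollary~\ref{cor:weakly-invariant} in view of Lemma~\ref{lem:weakly-invariant}, and your argument spells out precisely this (apply the weakly-invariant result to both $E$ and $E^c$, then use $\LowerProb(E)=1-\UpperProb(E^c)$ and coherence). One cosmetic point: the hypothesis $\EEE_1=\EEE_2=\cdots$ is needed only for invoking Corollary~\ref{cor:weakly-invariant}, not for Lemma~\ref{lem:weakly-invariant}, which is purely set-theoretic.
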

\noindent
Since each invariant event is a tail event,
Corollary~\ref{cor:invariant} also follows from Corollary~\ref{cor:kolmogorov-2}.

\section{The generality of the basic prediction protocol}
\label{sec:generality}

Let $\mathbf{E}(X)$ be the set of all outer probability contents on a set $X$.
Protocol 1 is a special case of the following apparently more general protocol.

\medskip

\noindent
\textsc{Protocol 2. Prediction protocol with Forecaster}

\noindent
\textbf{Parameters:} non-empty set $\XXX$, non-empty sets $\PPP_1,\PPP_2,\ldots$,\\
\hspace*{\IndentII}and function $\EEE:p\in\cup_n\PPP_n\mapsto\EEE_p\in\mathbf{E}(\XXX)$

\noindent
\textbf{Protocol:}

\parshape=7
\IndentI   \WidthI
\IndentI   \WidthI
\IndentII  \WidthII
\IndentII  \WidthII
\IndentII  \WidthII
\IndentII  \WidthII
\IndentI   \WidthI
\noindent
Skeptic announces $\K_0\in\bbbrbar$.\\
FOR $n=1,2,\ldots$:\\
  Forecaster announces $p_n\in\PPP_n$.\\
  Skeptic announces $f_n$ such that $\EEE_{p_n}(f_n)\le\K_{n-1}$.\\
  Reality announces $x_n\in\XXX$.\\
  $\K_n := f_n(x_n)$.\\
END FOR

\medskip

\noindent
As compared with Protocol~1,
Protocol~2 involves another player, Forecaster;
World is now called Reality.
(We will see later that another interpretation
is that World is split into two players:
Reality and Forecaster;
cf.\ \cite{shafer/vovk:2001}, p.~90.)
At the beginning of each trial
Forecaster gives his prediction $p_n$ for Reality's move $x_n$;
the prediction is chosen from a set $\PPP_n$,
the \emph{prediction space} for trial $n$.
We will use the notation $\PPP$ for $\cup_n\PPP_n$.
After Forecaster's move Skeptic chooses a gamble,
which we represent as a function $f_n$ on $\XXX$:
$f_n(x)$ is the payoff of the gamble
if Reality chooses $x$ as the trial's outcome.
The gambles available to Skeptic are determined
by Forecaster's prediction (via the function $\EEE:\PPP\to\mathbf{E}(\XXX)$).

Protocol~1 is a special case of Protocol~2
obtained by taking distinct one-element sets
$\PPP_1,\PPP_2,\mathenddots$\endsentence\
In some sense Protocol~1 describes independent trials
(since $\EEE_1,\EEE_2,\ldots$ are given in advance)
whereas Protocol~2 describes dependent trials
(Forecaster has a say in choosing $\EEE_{p_1},\EEE_{p_2},\ldots$).

\ifFULL\bluebegin
  The most familiar special case of our prediction protocol with Forecaster
  is where $\XXX$ is equipped with the structure of a measurable space,
  $\PPP_n=\PPP$ is the set of all probability measures on $\XXX$,
  and $\EEE_p$ is the expectation functional $f\mapsto \int f \dd p$
  associated with $p$
  (we do not impose any measurability conditions on $f$,
  and so $\int$ should be understood in the sense of, e.g., upper integral).
  This special case will be called the \emph{classical framework}
  if, in addition, $\XXX$ is a countable set.
  In this case,
  the relation between game-theoretic probability
  and the standard measure-theoretic probability
  becomes straightforward.
  (The countability of $\XXX$ makes the requirement of measurability,
  which is standard in measure-theoretic probability
  but not in game-theoretic probability,
  vacuous.)
\blueend\fi

\begin{remark}
  Another version of the prediction protocol with Forecaster
  is where Forecaster chooses
  the superexpectation functional directly.
  This is a special case of our protocol
  with $\PPP_n=\mathbf{E}(\XXX)$ for all $n$
  and with $\EEE:\PPP\to\mathbf{E}(\XXX)$ the identity function.
  The reader will also notice that allowing $\EEE$
  to depend not only on Forecaster's last move
  but also on his and Reality's previous moves
  is straightforward but does not lead to stronger results:
  the seemingly more general results easily follow from our results.
\end{remark}

\ifFULL\bluebegin
\begin{remark}
  Our definition with $\PPP_n$ depending on the trial $n$
  but not on the history of the game up to trial $n$
  is not quite right.
  It would be better to have $\PPP$ the same for all trials
  and instead to consider only events $E$ that are subsets
  of an event that Forecaster is required to enforce.
  But this would add another layer of complexity.
\end{remark}
\blueend\fi

We call the set
$\Omega:=\prod_{n=1}^{\infty}(\PPP_n\times\XXX)$
of all infinite sequences of Forecaster's and Reality's moves
the \emph{sample space}.
The elements of the set
$\bigcup_{n=0}^{\infty}\prod_{i=1}^n(\PPP_i\times\XXX)$
of all finite sequences of Forecaster's and Reality's moves
are called \emph{clearing situations},
and the elements of the set
$
  \bigcup_{n=0}^{\infty}
  \left(
    \prod_{i=1}^n(\PPP_i\times\XXX)
    \times
    \PPP_{n+1}
  \right)
$
are called \emph{betting situations}.
We will be mostly interested in clearing situations.
For each clearing situation $s$ we let $\Gamma(s)\subseteq\Omega$
stand for the set of all infinite extensions in $\Omega$ of $s$
and let $\Box$ be the empty clearing situation.

The \emph{level} $\lvert s\rvert$ of a clearing situation $s$
is the number of predictions in $s$.
In other words, $n$ is the level
of clearing situations of the form $p_1x_1\ldots p_nx_n$.
If $\omega\in\Omega$ and $n\in\{0,1,\ldots\}$,
$\omega^n$ is defined to be the unique clearing situation of level $n$
that is a prefix of $\omega$.


A function $\SSS$ defined on the clearing situations
and taking values in $\bbbrbar$  is called a \emph{supermartingale}
if,
for each $n\in\bbbn$, each clearing situation $s$ at level $n-1$,
and each $p\in\PPP_n$,
$$
  \EEE_p(\SSS(sp\,\cdot)) \le \SSS(s).
$$

\ifFULL\bluebegin
\begin{remark}
  Using the same words ``supermartingale'' and ``martingale''
  for both measure-theoretic and game-theoretic notions
  is convenient but potentially confusing
  (and some authors use different terms: cf.\ \cite{dawid/vovk:1999}).
  In the classical framework,
  a measure-theoretic (super)martingale
  is a game-theoretic (super)martingale
  corresponding to a fixed strategy for Forecaster.
\end{remark}
\blueend\fi

\noindent
For each function $\xi:\Omega\to\bbbrbar$
and each clearing situation $s$,
we define the (conditional) \emph{upper expectation} of $\xi$ given $s$
by the same formula (\ref{eq:upper-expectation}),
where $\SSS$ ranges over the supermartingales that are bounded below,
and we define the \emph{lower expectation} of $\xi$ given $s$
by (\ref{eq:lower-expectation}).
As before,
upper and lower probabilities of sets are defined by (\ref{eq:probability}).

\begin{lemma}\label{lem:levy-general}
  Theorem~\ref{thm:levy} and, therefore, Corollary~\ref{cor:levy}
  continue to hold under the definitions of this section.
\end{lemma}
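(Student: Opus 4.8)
The plan is to exhibit Protocol~2 as a special case of Protocol~1 and then quote Theorem~\ref{thm:levy} and Corollary~\ref{cor:levy} for it. Given the parameters $\XXX$, $\PPP_1,\PPP_2,\ldots$, $\EEE:\PPP\to\mathbf{E}(\XXX)$ of Protocol~2, I would consider the instance of Protocol~1 with outcome space $\mathcal{Y}:=\PPP\times\XXX$ and outer probability contents
\[
  \EEE^{(1)}_n(g)
  :=
  \sup_{p\in\PPP_n}\EEE_p\bigl(g(p,\cdot)\bigr),
  \qquad n\in\bbbn,\ g\in\bbbrbar^{\mathcal{Y}},
\]
where $g(p,\cdot):\XXX\to\bbbrbar$ denotes $x\mapsto g(p,x)$. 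The first step is the routine check that $\EEE^{(1)}_n$ is an outer probability content: Axioms~\ref{ax:order}--\ref{ax:norm} follow from the corresponding axioms for the $\EEE_p$ together with elementary properties of $\sup$, Axiom~\ref{ax:norm} also using $\PPP_n\ne\emptyset$; moreover $\EEE^{(1)}_n$ satisfies Axiom~\ref{ax:countable} whenever the $\EEE_p$ do, since $\sup_p\sum_k\le\sum_k\sup_p$. Thus $\EEE^{(1)}_n$ is a superexpectation as soon as all the $\EEE_p$ are.

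The heart of the argument is that supermartingales match up on the nose. Identify the level-$n$ clearing situation $p_1x_1\cdots p_nx_n$ of Protocol~2 with the length-$n$ situation $(p_1,x_1)\cdots(p_n,x_n)$ of the Protocol~1 instance, and identify $\Omega$, $\Gamma(s)$, and $\omega^n$ accordingly. Then, unfolding the definition of $\EEE^{(1)}_n$, the Protocol~1 supermartingale inequality $\EEE^{(1)}_n(\SSS(s\,\cdot))\le\SSS(s)$ at a situation $s$ of length $n-1$ says precisely that $\EEE_p(\SSS(sp\,\cdot))\le\SSS(s)$ for every $p\in\PPP_n$, which is exactly the defining requirement of a Protocol~2 supermartingale. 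Hence the two classes of (bounded-below) supermartingales coincide under the identification, the upper and lower expectations defined by (\ref{eq:upper-expectation}) and (\ref{eq:lower-expectation}) in Section~\ref{sec:generality} are literally those of Section~\ref{sec:expectation} for this instance, and Theorem~\ref{thm:levy} and Corollary~\ref{cor:levy} transfer verbatim.

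The one point that needs care, and that I expect to be the main (though minor) obstacle, is that the outcome space of a Protocol~1 instance is fixed throughout the game, so when the $\PPP_n$ genuinely vary the Protocol~1 sample space $(\PPP\times\XXX)^{\infty}$ is strictly larger than the Protocol~2 sample space $\prod_n(\PPP_n\times\XXX)$. I would absorb this discrepancy by noting that it costs Skeptic nothing to force the game into the smaller space: since $\EEE^{(1)}_n$ ignores the values of a gamble on $(\PPP\setminus\PPP_n)\times\XXX$, the function equal to $+\infty$ at every situation that first steps outside $\prod_n(\PPP_n\times\XXX)$ and $0$ elsewhere is a positive supermartingale with initial value $0$, so by Lemma~\ref{lem:probability} the set of $\omega$ lying outside $\prod_n(\PPP_n\times\XXX)$ is null in the Protocol~1 instance. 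Extending a function $\xi$ on $\prod_n(\PPP_n\times\XXX)$ by a constant $\le\inf\xi$ then leaves unchanged the value of (\ref{eq:upper-expectation}) at every clearing situation $s$ (one compares the witnessing supermartingales after extending each by $+\infty$ on the situations not extending $s$), and leaves the exceptional set of Theorem~\ref{thm:levy} null, so the statement for Protocol~2 follows from the one already proved for Protocol~1. (Alternatively, one checks directly that the proof of Theorem~\ref{thm:levy} and of the lemmas it rests on, namely Lemmas~\ref{lem:referee-1}, \ref{lem:probability}, and \ref{lem:referee-4}, go through unchanged with ``situation'' read as ``clearing situation'', since they invoke only the outer-probability-content axioms for the $\EEE_p$ and never the fact that these are fixed in advance rather than chosen by Forecaster.)
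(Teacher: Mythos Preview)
Your approach is essentially the paper's: the same embedding $\mathcal{Y}=\PPP\times\XXX$ with $\EEE^{(1)}_n(g)=\sup_{p\in\PPP_n}\EEE_p(g(p,\cdot))$, then transfer Theorem~\ref{thm:levy} from the Protocol~1 instance back to Protocol~2. The paper organises the comparison as two one-way statements: (1) any Protocol~2 supermartingale extends by $+\infty$ on non-clearing situations to a Protocol~1 supermartingale, so $\UpperExpect(\xi\givn\omega^n)\ge\UpperExpect(\xi'\givn\omega^n)$ at clearing situations; (2) any Protocol~1 supermartingale restricts to a Protocol~2 supermartingale, so null sets pull back. Your version instead first asserts an exact ``match on the nose'' and then patches the sample-space discrepancy with the (correct) observation that $\Omega'\setminus\Omega$ is Protocol~1 null; this is fine, but the claim of coincidence in your second paragraph is premature until you have said how to extend and restrict, which is exactly the content of the paper's Statements~1 and~2.
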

\begin{proof}
  Protocol~2 can be embedded in Protocol~1 as follows.
  Let the parameters of Protocol~2 be $\XXX$, $\PPP_1,\PPP_2,\ldots$, and $\EEE$;
  as before, $\PPP:=\cup_n\PPP_n$.
  For each $n\in\bbbn$, define an outer probability content $\EEE_n$
  on $\XXX':=\PPP\times\XXX$ by
  $$
    \EEE_n(f)
    :=
    \sup_{p\in\PPP_n}
    \EEE_p(f(p,\cdot)),
    \quad
    f:\XXX'\to\bbbrbar.
  $$
  (Axioms~\ref{ax:order}--\ref{ax:norm} are easy to check for $\EEE_n$;
  e.g., Axiom~\ref{ax:sum} follows from $\sup(f+g)\le\sup f+\sup g$.)
  The parameters of Protocol~1 will be $\XXX'$ and $\EEE_1,\EEE_2,\mathenddots$

  Our goal is to prove (\ref{eq:goal}) in Protocol~2.
  Let $\xi:\Omega\to(-\infty,\infty]$ be bounded below.
  To each $\omega$ in the sample space $\Omega$ of Protocol~2
  corresponds the same sequence
  in the sample space $\Omega':=(\XXX')^{\infty}$ of Protocol~1;
  therefore, $\Omega\subseteq\Omega'$
  (perhaps $\Omega\subset\Omega'$).
  Let $\xi':\Omega'\to(-\infty,\infty]$ be the extension of $\xi$
  defined by, say, $\xi'(\omega):=0$ for $\omega\notin\Omega$.
  Since the analogue
  $$
    \liminf_{n\to\infty}
    \UpperExpect(\xi'\givn\omega^n)
    \ge
    \xi'(\omega),
    \text{ for almost all }
    \omega\in\Omega',
  $$
  of (\ref{eq:goal}) holds in Protocol~1,
  we are only required to prove two statements:
  \begin{enumerate}
  \item 
    $\UpperExpect(\xi\givn\omega^n) \ge \UpperExpect(\xi'\givn\omega^n)$
    for all $\omega\in\Omega$ and $n=0,1,\ldots$,
    where the $\UpperExpect$ on the left-hand side refers to Protocol~2
    and the $\UpperExpect$ on the right-hand side refers to Protocol~1.
  \item 
    If an event $E\subseteq\Omega'$ is null in Protocol~1,
    $E\cap\Omega$ will be null in Protocol~2.
  \end{enumerate}

  First we prove Statement~1.
  Fix $\omega\in\Omega$ and $n\in\{0,1,\ldots\}$.
  For each $\epsilon>0$
  there is a bounded below supermartingale $\SSS$ in Protocol~2 such that
  $\SSS(\omega^n) \le \UpperExpect(\xi\givn\omega^n) + \epsilon$
  and $\liminf_{n\to\infty} \SSS(\omega^n\psi) \ge \xi(\omega^n\psi)$
  for all $\psi\in\Omega$.
  Let $\SSS'$ be the extension of $\SSS$ to $(\XXX')^*$
  defined as $\infty$ on the situations in Protocol~1
  that are not clearing situations in Protocol~2.
  By the definition of $\EEE_n$,
  $\SSS'$ will be a supermartingale in Protocol~1:
  if $s$ is a clearing situation in Protocol~2
  (the case where it is not is trivial),
  we have
  $$
    \EEE_{n}(\SSS(s\,\cdot))
    =
    \sup_{p\in\PPP_n}
    \EEE_p(\SSS(sp\,\cdot))
    \le
    \sup_{p\in\PPP_n}
    \SSS(s)
    =
    \SSS(s),
  $$
  where $n=\lvert s\rvert+1$.
  The supermartingale $\SSS'$ witnesses that
  $\UpperExpect(\xi'\givn\omega^n) \le \UpperExpect(\xi\givn\omega^n) + \epsilon$:
  indeed,
  we have
  $
    \liminf_{m\to\infty} \SSS'(\omega^n\psi^m)
    =
    \liminf_{m\to\infty} \SSS(\omega^n\psi^m)
    \ge
    \xi(\omega^n\psi)
    =
    \xi'(\omega^n\psi)
  $
  for $\psi\in\Omega$
  since $\SSS'$ is an extension of $\SSS$ and $\xi'$ is an extension of $\xi$,
  and we have
  $\liminf_{m\to\infty} \SSS'(\omega^n\psi^m) = \infty \ge 0 = \xi'(\omega^n\psi)$
  for $\psi\in\Omega'\setminus\Omega$
  since in this case $\omega^n\psi^m$
  is not a clearing situation in Protocol~2 from some $m$ on.
  Setting $\epsilon\to0$ completes the proof of Statement~1.

  To prove Statement~2,
  it suffices to check that for any supermartingale $\SSS'$ in Protocol~1
  its restriction to the clearing situations in Protocol~2
  will be a supermartingale in Protocol~2.
  This follows immediately from the definition of $\EEE_1,\EEE_2,\ldots$:
  if $n\in\bbbn$, $s$ is a clearing situation at level $n-1$, and $p\in\PPP_n$,
  $$
    \EEE_p
    \left(
      \SSS'(sp\,\cdot)
    \right)
    \le
    \sup_{p\in\PPP_n}
    \EEE_p
    \left(
      \SSS'(sp\,\cdot)
    \right)
    =
    \EEE_n
    \left(
      \SSS'(s\,\cdot)
    \right)
    \le
    \SSS'(s)
  $$
  (the first two $\cdot$ stand for an element of $\XXX$
  and the last $\cdot$ stands for an element of $\XXX'$).
\end{proof}

Notice that Theorem~\ref{thm:levy}
is a special case of Lemma~\ref{lem:levy-general},
corresponding to distinct one-element sets $\PPP_1,\PPP_2,\ldots$
in Protocol~2.
The argument in the proof of Lemma~\ref{lem:levy-general}
(which is due to a referee)
demonstrates that Protocols~1 and~2 are essentially equivalent.

\ifnotJOURNAL
\section{B\'artfai and R\'ev\'esz's zero-one law}
\label{sec:bartfai-revesz}

In this section we will illustrate L\'evy's zero-one law
by deducing a simple game-theoretic analogue
of a zero-one law \cite{bartfai/revesz:1967} for dependent random variables.
Intuitively, the role of the sample space will now be played
by the set $\XXX^{\infty}$ of all moves by Reality,
and the role of situations will be played by elements of $\XXX^*$.
If $\chi=x_1x_2\ldots\in\XXX^{\infty}$,
we let $\chi_n$ stand for $x_n\in\XXX$ for $n\in\bbbn$,
and let $\chi^n$ stand for $x_1\ldots x_n\in\XXX^n$ for $n\in\{0,1,\ldots\}$.

A \emph{forecasting system} $\Phi$ is any function $\Phi:\XXX^*\to\PPP$
such that $\Phi(\chi^{n-1})\in\PPP_n$ for all $\chi\in\XXX^{\infty}$ and $n\in\bbbn$.
A forecasting system can serve as a strategy for Forecaster in Protocol~2,
giving Forecaster's move as function of Reality's moves.
For each $\chi\in\XXX^{\infty}$ define
$$
  \chi_{\Phi}
  :=
  \Phi(\Box) \chi_1 \Phi(\chi^1) \chi_2 \Phi(\chi^2) \chi_3\ldots
  \in
  \Omega.
$$
For each $E\subseteq\XXX^{\infty}$ define
$E_{\Phi}:=\{\chi_{\Phi}\st\chi\in E\}$.
For $E\subseteq\XXX^{\infty}$, $\chi\in\XXX^{\infty}$, and $n\in\{0,1,\ldots\}$,
set
$$
  \UpperProb_{\Phi}(E\givn\chi^n)
  :=
  \UpperProb(E_{\Phi}\givn(\chi_{\Phi})^n),
  \quad
  \LowerProb_{\Phi}(E\givn\chi^n)
  :=
  1-\UpperProb_{\Phi}(E^c\givn\chi^n).
$$
As before, ``${}\givn\Box$'' may be omitted,
so that $\UpperProb_{\Phi}(E)=\UpperProb(E_{\Phi})$
and $\LowerProb_{\Phi}(E)=\LowerProb(E_{\Phi})$.
An $E\subseteq\XXX^{\infty}$ holds \emph{$\Phi$-almost surely}
(\emph{$\Phi$-a.s.})\ if $\UpperProb_{\Phi}(E^c)=0$.

For each $N\in\bbbn$,
let $\FFF_N$ be the set of all $E\subseteq\XXX^{\infty}$
such that, for all $\chi,\chi'\in\Omega$,
$$
  \left(
    \chi\in E,
    \forall n\ge N: \chi'_n=\chi_n
  \right)
  \Longrightarrow
  \chi'\in E.
$$
Let us say that a forecasting system $\Phi$
is \emph{$\delta$-mixing}, for $\delta\in[0,1)$,
if there exists a function $a:\bbbn\to\bbbn$ such that
\begin{equation}\label{eq:mixing}
  \UpperProb_{\Phi}(E\givn\chi^n)
  -
  \UpperProb_{\Phi}(E)
  \le
  \delta
  \quad
  \text{$\Phi$-a.s.}
\end{equation}
for each $n\in\bbbn$ and each $E\in\FFF_{n+a(n)}$.
By a \emph{tail set} we mean an element of $\cap_N\FFF_N$.
Now we can state an approximate zero-one law,
which is a game-theoretic analogue of the main result of \cite{bartfai/revesz:1967}.
\begin{corollary}\label{cor:bartfai}
  Suppose $\EEE_p$ is a superexpectation for all $p\in\PPP$.
  Let $\delta\in[0,1)$
  and let $\Phi$ be a $\delta$-mixing forecasting system.
  If $E\subseteq\XXX^{\infty}$ is a tail set,
  then $\UpperProb_{\Phi}(E)=0$ or $\UpperProb_{\Phi}(E)\ge1-\delta$.
\end{corollary}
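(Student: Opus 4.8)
The plan is to apply our game-theoretic L\'evy zero-one law to the event $E_{\Phi}\subseteq\Omega$ and then combine it with the $\delta$-mixing hypothesis. Throughout I would work in Protocol~2 with the forecasting system $\Phi$ fixed, exploiting that the map $\chi\mapsto\chi_{\Phi}$ is injective, carries $\chi^n$ to $(\chi_{\Phi})^n$, and, by the very definitions of $\UpperProb_{\Phi}$ and of ``$\Phi$-a.s.'', identifies conditional upper probabilities and negligible subsets of $\XXX^{\infty}$ with those of their images in $\Omega$. Since $\EEE_p$ is a superexpectation for every $p\in\PPP$, a countable sum of positive supermartingales in Protocol~2 is again a supermartingale (by $\sigma$-subadditivity of each $\EEE_p$, exactly as in the proof of Lemma~\ref{lem:referee-4}); hence upper probability in Protocol~2 is $\sigma$-subadditive, so a countable union of sets $A\subseteq\XXX^{\infty}$ each having $\UpperProb_{\Phi}(A)=0$ again has zero upper $\Phi$-probability. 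Also, by Lemma~\ref{lem:levy-general}, Corollary~\ref{cor:levy} holds in Protocol~2.

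Now assume $\UpperProb_{\Phi}(E)<1-\delta$; I would show $\UpperProb_{\Phi}(E)=0$, which gives the stated dichotomy. Because $E$ is a tail set, $E\in\FFF_{n+a(n)}$ for every $n\in\bbbn$, where $a:\bbbn\to\bbbn$ is a function witnessing that $\Phi$ is $\delta$-mixing, so the inequality~(\ref{eq:mixing}) applies to $E$ at every level $n$. Thus for each $n$ the set
\[
  D_n:=\{\chi\in\XXX^{\infty}\st\UpperProb_{\Phi}(E\givn\chi^n)>\UpperProb_{\Phi}(E)+\delta\}
\]
satisfies $\UpperProb_{\Phi}(D_n)=0$, and therefore, by the $\sigma$-subadditivity noted above, $\UpperProb_{\Phi}(D)=0$ where $D:=\bigcup_n D_n$. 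For every $\chi\notin D$ we then have $\limsup_{n\to\infty}\UpperProb_{\Phi}(E\givn\chi^n)\le\UpperProb_{\Phi}(E)+\delta<1$.

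On the other hand, applying Corollary~\ref{cor:levy} to $E_{\Phi}$ in Protocol~2 and translating back through $\chi\mapsto\chi_{\Phi}$ shows that the set
\[
  F:=\{\chi\in E\st\liminf_{n\to\infty}\UpperProb_{\Phi}(E\givn\chi^n)<1\}
\]
--- which, since $\UpperProb_{\Phi}(E\givn\chi^n)\le1$, is exactly the set of $\chi\in E$ for which $\UpperProb_{\Phi}(E\givn\chi^n)$ fails to tend to $1$ --- satisfies $\UpperProb_{\Phi}(F)=0$. Then $\UpperProb_{\Phi}(D\cup F)=0$ by finite subadditivity, and any $\chi\in E$ with $\chi\notin D\cup F$ would have both $\liminf_n\UpperProb_{\Phi}(E\givn\chi^n)\ge1$ and $\limsup_n\UpperProb_{\Phi}(E\givn\chi^n)<1$, which is impossible. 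Hence $E\subseteq D\cup F$, and monotonicity of $\UpperProb_{\Phi}$ yields $\UpperProb_{\Phi}(E)=0$.

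I expect the main obstacle to be the bookkeeping collected in the first paragraph: checking that passing from subsets of $\XXX^{\infty}$ (with $\UpperProb_{\Phi}$, ``$\Phi$-a.s.'', and conditioning on $\chi^n$) to subsets of the Protocol~2 sample space $\Omega$ (with $\UpperProb$, ``a.s.'', and conditioning on $\omega^n$) is faithful, and that $\sigma$-subadditivity of $\UpperProb_{\Phi}$ is available --- the latter being the one place the hypothesis that each $\EEE_p$ is a superexpectation is used, namely to make $D=\bigcup_n D_n$ negligible. Once these are in place, the argument is just Corollary~\ref{cor:levy} combined with the definition of $\delta$-mixing.
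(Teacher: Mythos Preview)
Your proof is correct and follows essentially the same route as the paper's: both combine Corollary~\ref{cor:levy} (valid in Protocol~2 via Lemma~\ref{lem:levy-general}) with the $\delta$-mixing inequality applied at every level, using the $\sigma$-subadditivity furnished by the superexpectation hypothesis to assemble a single null exceptional set, and then conclude $E\subseteq D\cup F$ (in the paper's notation, $E_{\Phi}\subseteq A_{\Phi}\cup B$). The only cosmetic difference is that you work entirely in $\XXX^{\infty}$ with $\UpperProb_{\Phi}$, whereas the paper keeps the L\'evy null set $B$ in $\Omega$; your bookkeeping paragraph handles exactly this translation.
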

\begin{proof}
  Fix a tail set $E\subseteq\XXX^{\infty}$;
  (\ref{eq:mixing}) then holds for all $n$.
  By the last part of Lemma \ref{lem:referee-4}
  (which is also valid in Protocol~2),
  there is $A\subseteq\XXX^{\infty}$ such that $\UpperProb_{\Phi}(A)=0$
  and
  \begin{equation}\label{eq:bartfai-1}
    \UpperProb_{\Phi}(E\givn\chi^n)
    -
    \UpperProb_{\Phi}(E)
    \le
    \delta
  \end{equation}
  holds for all $n$ and all $\chi\notin A$.
  By definition, (\ref{eq:bartfai-1}) means
  \begin{equation}\label{eq:bartfai-2}
    \UpperProb(E_{\Phi}\givn(\chi_{\Phi})^n)
    -
    \UpperProb(E_{\Phi})
    \le
    \delta.
  \end{equation}
  By Corollary~\ref{cor:levy} and Lemma~\ref{lem:levy-general},
  there is a set $B\subseteq\Omega$ such that $\UpperProb(B)=0$ and
  \begin{equation}\label{eq:bartfai-3}
    \UpperProb(E_{\Phi}\givn\omega^n) \to 1
    \quad
    (n\to\infty)
  \end{equation}
  for all $\omega\in E_{\Phi}\setminus B$.
  Letting $n\to\infty$ in (\ref{eq:bartfai-2}) and using (\ref{eq:bartfai-3}),
  we can see that $\UpperProb(E_{\Phi})\ge1-\delta$
  (i.e., $\UpperProb_{\Phi}(E)\ge1-\delta$)
  for all $\chi$ such that $\chi\notin A$ (i.e., $\chi_{\Phi}\notin A_{\Phi}$)
  and $\chi_{\Phi}\in E_{\Phi}\setminus B$.

  Suppose $\UpperProb_{\Phi}(E)\ge1-\delta$ is violated.
  Then there are no $\chi$ satisfying
  $\chi_{\Phi}\notin A_{\Phi}$ and $\chi_{\Phi}\in E_{\Phi}\setminus B$.
  In other words,
  $E_{\Phi}\setminus B\subseteq A_{\Phi}$,
  which implies $E_{\Phi}\subseteq A_{\Phi}\cup B$,
  which in turn implies $\UpperProb(E_{\Phi})=0$,
  i.e., $\UpperProb_{\Phi}(E)=0$.
\end{proof}
Let us say that a set $E\subseteq\XXX^{\infty}$ is \emph{$\Phi$-unprobabilized}
if $\LowerProb_{\Phi}(E)<\UpperProb_{\Phi}(E)$.
An important special case of Corollary~\ref{cor:bartfai}
is the following zero-one law for ``weakly dependent'' trials
(cf.\ Corollary 1 in \cite{bartfai/revesz:1967}).
\begin{corollary}
  Suppose $\EEE_p$ is a superexpectation for all $p\in\PPP$.
  Let $\delta\in[0,1/2)$
  and let $\Phi$ be a $\delta$-mixing forecasting system.
  Every tail set $E\subseteq\XXX^{\infty}$
  satisfies $\LowerProb_{\Phi}(E)=1$,
  satisfies $\UpperProb_{\Phi}(E)=0$,
  or is $\Phi$-unprobabilized.
\end{corollary}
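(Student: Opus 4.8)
The plan is to obtain this as an immediate consequence of Corollary~\ref{cor:bartfai} applied to both $E$ and its complement, in exactly the way Corollary~\ref{cor:kolmogorov-2} was deduced from Corollary~\ref{cor:kolmogorov-1}. First I would note that the complement of a tail set is again a tail set: each $\FFF_N$ is closed under complementation, since its defining condition --- if $\chi\in E$ and $\chi'_n=\chi_n$ for all $n\ge N$ then $\chi'\in E$ --- is symmetric under swapping $\chi$ and $\chi'$ on the coordinates $\ge N$; hence so is $\cap_N\FFF_N$. Since being $\delta$-mixing is a property of $\Phi$ alone (it quantifies over \emph{all} $E\in\FFF_{n+a(n)}$ for a single function $a$), Corollary~\ref{cor:bartfai} applies verbatim to the tail set $E^c$ as well.

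Next I would invoke Corollary~\ref{cor:bartfai} twice, obtaining $\UpperProb_{\Phi}(E)\in\{0\}\cup[1-\delta,1]$ and $\UpperProb_{\Phi}(E^c)\in\{0\}\cup[1-\delta,1]$, and then rewrite the second statement via the identity $\LowerProb_{\Phi}(E)=1-\UpperProb_{\Phi}(E^c)$ (the definition of $\LowerProb_{\Phi}$ from Section~\ref{sec:bartfai-revesz}). The case analysis is then short: if $\UpperProb_{\Phi}(E)=0$ we are in the second listed alternative; if $\UpperProb_{\Phi}(E^c)=0$ then $\LowerProb_{\Phi}(E)=1$, the first alternative; and in the only remaining case, $\UpperProb_{\Phi}(E)\ge1-\delta$ and $\UpperProb_{\Phi}(E^c)\ge1-\delta$, so $\LowerProb_{\Phi}(E)=1-\UpperProb_{\Phi}(E^c)\le\delta$, whence, using $\delta<1/2$ so that $\delta<1-\delta$, we get $\LowerProb_{\Phi}(E)\le\delta<1-\delta\le\UpperProb_{\Phi}(E)$, i.e.\ $E$ is $\Phi$-unprobabilized.

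There is no real obstacle here: the argument follows the same ``apply the sharper law to $E$ and $E^c$, then chase inequalities'' pattern used for Corollaries~\ref{cor:kolmogorov-2} and~\ref{cor:invariant}. The only points needing a moment's care are verifying that $E^c$ is a tail set (so that Corollary~\ref{cor:bartfai} is available for it) and using the \emph{strict} inequality $\delta<1/2$ rather than $\delta\le1/2$, which is precisely what forces the strict gap $\LowerProb_{\Phi}(E)<\UpperProb_{\Phi}(E)$ in the last case and hence the trichotomy.
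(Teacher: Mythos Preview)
Your proposal is correct and follows exactly the approach indicated in the paper, which proves the corollary in a single sentence: ``It suffices to apply Corollary~\ref{cor:bartfai} to the tail sets $E$ and $E^c$.'' Your write-up simply fills in the details of that sentence---verifying that $E^c$ is a tail set, invoking $\LowerProb_{\Phi}(E)=1-\UpperProb_{\Phi}(E^c)$, and carrying out the short case analysis using $\delta<1/2$---so there is nothing to add.
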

\begin{proof}
  It suffices to apply Corollary~\ref{cor:bartfai}
  to the tail sets $E$ and $E^c$.
\end{proof}

It is easy to strengthen Corollary~\ref{cor:bartfai}
by modifying the notion of a $\delta$-mixing forecasting system.
Let us say that the forecasting system is \emph{asymptotically $\delta$-mixing},
for $\delta\in[0,1)$,
if (\ref{eq:mixing}) holds for each $n\in\bbbn$
and each tail set $E$.
B\'artfai and R\'ev\'esz \cite{bartfai/revesz:1967}
do not introduce this notion (more precisely, its measure-theoretic version) explicitly,
but they do introduce two notions intermediate
between $\delta$-mixing and asymptotic $\delta$-mixing,
which they call stochastic $\delta$-mixing and $\delta$-mixing in mean.
The following proposition
is similar to (but much simpler than)
Theorems 2 and 3 in \cite{bartfai/revesz:1967}.
\begin{corollary}
  Suppose $\EEE_p$ is a superexpectation for all $p\in\PPP$.
  Let $\delta\in[0,1)$.
  The following two conditions are equivalent:
  \begin{enumerate}
  \item
    A forecasting system $\Phi$ is asymptotically $\delta$-mixing.
  \item
    Every tail set $E\subseteq\XXX^{\infty}$ satisfies
    $\UpperProb_{\Phi}(E)=0$ or $\UpperProb_{\Phi}(E)\ge1-\delta$.
  \end{enumerate}
\end{corollary}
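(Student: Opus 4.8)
The statement is an equivalence, and I would prove the two implications separately. The implication (1) $\Rightarrow$ (2) is essentially already contained in Corollary~\ref{cor:bartfai}: once a tail set $E$ has been fixed, the proof of that corollary uses the mixing inequality (\ref{eq:mixing}) only for this $E$ and for all $n\in\bbbn$ (together with the $\sigma$-subadditivity of Lemma~\ref{lem:referee-4}, which is where the hypothesis that every $\EEE_p$ is a superexpectation is needed). For a $\delta$-mixing system, (\ref{eq:mixing}) for all $n$ is read off from the fact that a tail set lies in $\FFF_{n+a(n)}$ for every $n$; for an asymptotically $\delta$-mixing system, (\ref{eq:mixing}) is postulated for every $n$ and every tail set from the outset. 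Either way the proof of Corollary~\ref{cor:bartfai} applies verbatim and yields~(2).

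For the converse, (2) $\Rightarrow$ (1), fix a tail set $E\subseteq\XXX^{\infty}$ and $n\in\bbbn$; I must show that $\UpperProb_{\Phi}(E\givn\chi^n)-\UpperProb_{\Phi}(E)\le\delta$ holds $\Phi$-almost surely. By~(2) there are two cases. If $\UpperProb_{\Phi}(E)\ge1-\delta$, then since $\UpperProb_{\Phi}(E\givn\chi^n)=\UpperProb(E_{\Phi}\givn(\chi_{\Phi})^n)\le1$ for every $\chi$ (by Lemma~\ref{lem:referee-3}, as $\III_{E_{\Phi}}\le1$), the desired inequality holds identically. The substantive case is $\UpperProb_{\Phi}(E)=\UpperProb(E_{\Phi})=0$, which I would treat by a maximal inequality. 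For each $\epsilon>0$, choose a supermartingale $\SSS^{\epsilon}$ in Protocol~2, bounded below and hence positive (cf.\ Lemma~\ref{lem:referee-1}), with $\SSS^{\epsilon}(\Box)<\epsilon$ and $\liminf_m\SSS^{\epsilon}(\omega^m)\ge\III_{E_{\Phi}}(\omega)$ for all $\omega\in\Omega$. Since the restriction of $\SSS^{\epsilon}$ to the subtree below $\omega^n$ witnesses $\UpperProb(E_{\Phi}\givn\omega^n)$, we have $\SSS^{\epsilon}(\omega^n)\ge\UpperProb(E_{\Phi}\givn\omega^n)$ for every $\omega$ and every $n$, so the set $\{\omega\in\Omega:\UpperProb(E_{\Phi}\givn\omega^n)>\delta\text{ for some }n\}$ is contained in $\{\omega\in\Omega:\sup_n\SSS^{\epsilon}(\omega^n)>\delta\}$. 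Applying Lemma~\ref{lem:probability} to the positive supermartingale $\SSS^{\epsilon}/\delta$ (legitimate in Protocol~2 via the embedding used in the proof of Lemma~\ref{lem:levy-general}) bounds the upper probability of the latter set by $\SSS^{\epsilon}(\Box)/\delta<\epsilon/\delta$. Since the former set does not depend on $\epsilon$, letting $\epsilon\to0$ shows it is null; translating back through $\Phi$ (using $\UpperProb_{\Phi}(A)=\UpperProb(A_{\Phi})$ and monotonicity) gives $\UpperProb_{\Phi}(E\givn\chi^n)\le\delta$, hence $\UpperProb_{\Phi}(E\givn\chi^n)-\UpperProb_{\Phi}(E)\le\delta$, for all $n$ at once and $\Phi$-almost surely.

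The $\Phi$-bookkeeping above is identical to that in the proof of Corollary~\ref{cor:bartfai}, and the migration of the supermartingale arguments from Protocol~1 to Protocol~2 is already licensed by Lemma~\ref{lem:levy-general}; these parts are routine. The step I would expect to require the most care is the treatment of the null case in (2) $\Rightarrow$ (1): it cannot be extracted from Theorem~\ref{thm:levy} or Corollary~\ref{cor:levy}, which supply only the ``$\liminf\ge$'' half of L\'evy's law, whereas what is needed is that a null event stays conditionally null along almost every path --- a maximal-inequality phenomenon (Lemma~\ref{lem:probability}) rather than a convergence one.
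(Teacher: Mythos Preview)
Your proof is correct and follows the same line as the paper's. For $(1)\Rightarrow(2)$ both you and the paper simply observe that the proof of Corollary~\ref{cor:bartfai} goes through verbatim, and for $(2)\Rightarrow(1)$ the paper handles the case $\UpperProb_{\Phi}(E)=0$ by saying ``$\UpperProb_{\Phi}(E\givn\chi^n)=0$ $\Phi$-a.s.\ can be proved similarly to the proof of Lemma~\ref{lem:determinate}'', which is exactly the maximal-inequality argument you spell out (pick a positive supermartingale with small initial value witnessing $\UpperProb(E_{\Phi})=0$, note it dominates $\UpperProb(E_{\Phi}\givn\omega^n)$, apply Lemma~\ref{lem:probability}). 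Your final remark that this step is a maximal-inequality phenomenon rather than a consequence of Corollary~\ref{cor:levy} is apt and matches the paper's choice of reference.
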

\begin{proof}
  The argument of Corollary~\ref{cor:bartfai} shows
  that the first condition implies the second.
  Let us now assume the second condition
  and deduce the first.
  Let $n\in\bbbn$ and $E$ be a tail set.
  If $\UpperProb_{\Phi}(E)=0$,
  then $\UpperProb_{\Phi}(E\givn\chi^n)=0$ $\Phi$-a.s.\ can be proved
  similarly to the proof of Lemma \ref{lem:determinate},
  and so (\ref{eq:mixing}) holds.
  If $\UpperProb_{\Phi}(E)\ge1-\delta$,
  (\ref{eq:mixing}) is vacuous.
\end{proof}
\fi

\ifFULL\bluebegin
  \section{Unbounded functions}

  An alternative definition of upper expectation
  does not require the supermartingales $\SSS$ in (\ref{eq:upper-expectation})
  to be bounded.
  Theorem~\ref{thm:expectation} is then non-trivial,
  and probably wrong in general.
  Its version for unbounded functions then might be:
  \begin{lemma}\label{lem:expectation-unbounded}
    The conclusions of Theorem~\ref{thm:expectation}
    hold for $\xi:\Omega\to\bbbrbar$
    satisfying $\UpperExpect(\xi^-)<\infty$.
  \end{lemma}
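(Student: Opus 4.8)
The plan is to imitate the proof of Theorem~\ref{thm:expectation}, using the finiteness of $\UpperExpect(\xi^-)$ as a substitute for the lower-boundedness that is available there. Fix a situation $s_0$. As in the proof of Lemma~\ref{lem:referee-2} (whose argument never uses lower-boundedness), one may assume the limit inequalities hold on all of $\Omega$ rather than just on $\Gamma(s_0)$, so it suffices to show: for every supermartingale $\SSS$ with $\SSS(s_0)<\infty$ and $\limsup_n\SSS(\omega^n)\ge\xi(\omega)$ for all $\omega$, and every $\epsilon\in(0,1)$, there is some $\SSS^*\in\mathbf L$ with $\SSS^*(s_0)\le\SSS(s_0)+\epsilon$ and $\lim_n\SSS^*(\omega^n)\ge\xi(\omega)$ for all $\omega\in\Gamma(s_0)$. (If $\SSS(s_0)=\infty$ the constant supermartingale $\infty$, which lies in $\mathbf L$, does the job.)

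First I would produce the auxiliary object. Since $\UpperExpect(\xi^-)<\infty$ and $\xi^-\ge0$ is bounded below, Theorem~\ref{thm:expectation} applied to $\xi^-$ — together with Lemma~\ref{lem:referee-1}, which forces such a witness to be positive — yields a positive supermartingale $\mathcal R$ with $\mathcal R(s_0)<\UpperExpect(\xi^-\givn s_0)+\epsilon/2$ and $\lim_n\mathcal R(\omega^n)\ge\xi^-(\omega)$ for all $\omega$; here one first checks that $\UpperExpect(\xi^-\givn s_0)<\infty$, using Lemma~\ref{lem:martingale} (the map $s\mapsto\UpperExpect(\xi^-\givn s)$ is a martingale, finite at $\Box$) together with (\ref{eq:coherence}) to propagate finiteness down to $s_0$. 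The point of $\mathcal R$ is that $\UUU:=\SSS+\mathcal R$ should be a positive supermartingale: from $\SSS\le\UUU$, $\limsup_n\SSS(\omega^n)\ge\xi(\omega)$, and $\liminf_n\mathcal R(\omega^n)\ge\xi^-(\omega)\ge-\xi(\omega)$ one gets $\limsup_n\UUU(\omega^n)\ge\max(\xi(\omega),0)\ge0$ (with some routine care on the paths where $\xi(\omega)=-\infty$, which form a null set because $\UpperExpect(\xi^-)<\infty$), and then Lemma~\ref{lem:referee-1} gives $\UUU\ge0$ on all situations extending $s_0$.

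Next I would run the upcrossing construction from the proof of Theorem~\ref{thm:expectation} on the lower-bounded supermartingale $\UUU$ — the very point being that one cannot run it on $\SSS$ itself, since the normalization step there needs a lower bound. For an enumeration $[a_i,b_i]$ of the rational subintervals of $[0,\infty)$, build the positive Doob-style upcrossing supermartingales $\SSS^i$ with $\SSS^i(s_0)=1$, set $\TTT:=\sum_i2^{-i}\SSS^i$ (a supermartingale by the increment argument given at the end of that proof, so Axiom~\ref{ax:countable} is not needed), and put $\UUU^*:=\UUU+\epsilon'\TTT$ for a suitable $\epsilon'\in(0,1)$; then $\UUU^*\in\mathbf L$, $\UUU^*\ge\UUU$ pointwise, and $\UUU^*(s_0)\le\UUU(s_0)+\epsilon'$. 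The natural candidate is then $\SSS^*:=\UUU^*-\mathcal R$: one has $\SSS^*(s_0)\le\SSS(s_0)+\epsilon'$, and on any $\omega$ along which $\mathcal R$ has a finite limit $r(\omega)$ the limit $\lim_n\SSS^*(\omega^n)=\lim_n\UUU^*(\omega^n)-r(\omega)\ge\limsup_n\UUU(\omega^n)-r(\omega)\ge\xi(\omega)$ is exactly what is wanted.

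The hard part — and, I suspect, the reason the statement is offered only tentatively — is that $\SSS^*=\UUU^*-\mathcal R$ is a \emph{difference} of two supermartingales and therefore need not be a supermartingale, so it is not a legitimate witness for $\UpperExpect(\xi\givn s_0)$. Subtracting $\mathcal R$ back is legitimate only if $\mathcal R$ can be taken to be a \emph{martingale}, and the one natural martingale dominating $\xi^-$ in the limit is $s\mapsto\UpperExpect(\xi^-\givn s)$ (Lemma~\ref{lem:martingale}); but L\'evy's law (Theorem~\ref{thm:levy}) only guarantees $\liminf_n\UpperExpect(\xi^-\givn\omega^n)\ge\xi^-(\omega)$ \emph{almost surely}, not for every $\omega$, and gives no pathwise convergence, so this martingale need not lie in $\mathbf L$; nor are the ``$\infty-\infty$'' paths (those along which $\mathcal R\to\infty$) under control. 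Overcoming this would seem to require a genuinely simultaneous construction — interleaving the lower-boundedness repair and the upcrossing corrections into a single object that is visibly a supermartingale and visibly in $\mathbf L$ — or else an extra hypothesis (for instance that $\UpperExpect(\xi^-\givn s)<\infty$ for all $s$ and that $\UpperExpect(\xi^-\givn\omega^n)$ converges along every path); absent such a device I would not expect the lemma to hold in full generality.
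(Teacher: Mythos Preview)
Your suspicion is correct, and in fact the paper does not prove this lemma at all. The statement sits inside the authors' private \texttt{\textbackslash ifFULL} block, and the entire ``proof'' reads: \emph{``Perhaps the proof can be extracted from the proof of Theorem~VII.4.1 in \cite{shiryaev:1996}.''} That is the whole content; the lemma is a conjecture with a hint, not a theorem.

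Your analysis of the obstruction is the right one. The measure-theoretic result the authors point to is Doob's submartingale convergence theorem, whose proof for not-necessarily-positive (super)martingales with $\sup_n\Expect X_n^-<\infty$ passes through a Krickeberg-type step: one writes the process as a difference of two positive supermartingales (or controls the negative part by a positive supermartingale) and runs the upcrossing argument on the positive pieces. That is exactly the manoeuvre you attempted --- add a positive witness $\mathcal R$ for $\xi^-$, run the Doob construction on $\UUU=\SSS+\mathcal R$, then subtract $\mathcal R$ back --- and you have put your finger on why it breaks here: in the game-theoretic setting subtraction of a supermartingale is not a legal operation (Axiom~\ref{ax:sum} is only subadditive), so $\UUU^*-\mathcal R$ need not be a supermartingale, and the natural \emph{martingale} replacement $s\mapsto\UpperExpect(\xi^-\givn s)$ lies in $\mathbf L$ only almost surely, not pathwise. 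There is no device in the paper that circumvents this; the authors themselves say of the alternative (unbounded-witness) definition that Theorem~\ref{thm:expectation} is ``probably wrong in general'', and the lemma is offered only as what a salvageable version ``might be''. So your proposal goes well beyond what the paper supplies, and your diagnosis that a genuinely new idea (or an extra hypothesis) would be needed is consistent with the authors' own hedging.
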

  \begin{proof}
    Perhaps the proof can be extracted from the proof of Theorem VII.4.1
    in \cite{shiryaev:1996}.
  \end{proof}

  The version of Theorem~\ref{thm:levy} for functions
  that are not assumed bounded from below is:
  \begin{theorem}
    The conclusion of Theorem~\ref{thm:levy}
    continues to hold for extended functions $\xi$
    such that $\UpperExpect(\xi^-)<\infty$.
  \end{theorem}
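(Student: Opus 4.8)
The plan is to deduce the statement from Theorem~\ref{thm:levy} applied to a suitable \emph{nonnegative} function, using a convergent dominating supermartingale to absorb the part of $\xi$ that is unbounded below. Write $\xi^-:=\max(-\xi,0)$. Since $\UpperExpect(\xi^-)<\infty$, Theorem~\ref{thm:expectation} provides a supermartingale $\SSS^*\in\mathbf{L}$ with $\SSS^*(\Box)<\infty$ and $\lim_{n\to\infty}\SSS^*(\omega^n)\ge\xi^-(\omega)$ for all $\omega\in\Omega$; set $L^*(\omega):=\lim_{n\to\infty}\SSS^*(\omega^n)\in(-\infty,\infty]$. By Lemma~\ref{lem:referee-1} we have $\SSS^*\ge0$, and dividing $\SSS^*$ by arbitrarily large constants shows that the event $\{L^*=\infty\}$ is null; note it contains $\{\xi=-\infty\}$, because $L^*\ge\xi^-$. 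Now put $\eta:=\xi+L^*$ (on the null set $\{\xi=-\infty\}$ this uses the convention $\infty+(-\infty):=\infty$ and gives $\eta=\infty$). Since $L^*\ge\xi^-$ we get $\eta\ge\xi+\xi^-\ge0$ everywhere, so $\eta\colon\Omega\to[0,\infty]$ is bounded from below, and Theorem~\ref{thm:levy} applies to it: for almost all $\omega$, $\liminf_{n\to\infty}\UpperExpect(\eta\givn\omega^n)\ge\eta(\omega)$.

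The second step transfers this back to $\xi$. From $\eta=\xi+L^*$ and the subadditivity of the outer probability content $\UpperExpect(\,\cdot\givn\omega^n)$ (Lemma~\ref{lem:referee-3}) we have $\UpperExpect(\eta\givn\omega^n)\le\UpperExpect(\xi\givn\omega^n)+\UpperExpect(L^*\givn\omega^n)$; and $\SSS^*$ is an admissible witness in the defining formula~(\ref{eq:upper-expectation}) for the conditional upper expectation of $L^*$ given $\omega^n$ --- since $\liminf_{m\to\infty}\SSS^*(\psi^m)=L^*(\psi)$ for all $\psi\in\Gamma(\omega^n)$, because $\SSS^*\in\mathbf{L}$ --- so $\UpperExpect(L^*\givn\omega^n)\le\SSS^*(\omega^n)$. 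For $\omega\notin\{L^*=\infty\}$ one has $\SSS^*(\omega^n)<\infty$ for all large $n$ (if $\SSS^*(\omega^n)=\infty$ infinitely often then, the limit existing, $L^*(\omega)=\infty$), so for such $\omega$ and all large $n$ the rearrangement $\UpperExpect(\xi\givn\omega^n)\ge\UpperExpect(\eta\givn\omega^n)-\UpperExpect(L^*\givn\omega^n)\ge\UpperExpect(\eta\givn\omega^n)-\SSS^*(\omega^n)$ is legitimate. Taking $\liminf_{n\to\infty}$ and using $\SSS^*(\omega^n)\to L^*(\omega)<\infty$ together with the conclusion of Theorem~\ref{thm:levy} for $\eta$, we obtain, for almost all $\omega$, $\liminf_{n\to\infty}\UpperExpect(\xi\givn\omega^n)\ge\liminf_{n\to\infty}\UpperExpect(\eta\givn\omega^n)-L^*(\omega)\ge\eta(\omega)-L^*(\omega)=\xi(\omega)$, the last equality holding because $L^*(\omega)<\infty$. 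The exceptional set is the union of the null set from Theorem~\ref{thm:levy} applied to $\eta$ and the null set $\{L^*=\infty\}$, hence null by finite subadditivity.

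The point where a direct truncation of $\xi$ breaks down --- and what I expect to be the main obstacle --- is that clipping $\xi$ from below and invoking monotonicity bounds $\UpperExpect(\xi\givn\omega^n)$ only from above, the wrong direction, while the subadditive correction term cannot be made to vanish in general, since conditional upper expectations need not converge. Replacing the correction by $L^*=\lim_n\SSS^*(\omega^n)$ circumvents this: $L^*$ dominates $\xi^-$ (so $\eta$ is nonnegative), it is finite almost surely, and --- crucially --- it is the limit of an honest supermartingale, which makes both $\UpperExpect(L^*\givn\omega^n)\le\SSS^*(\omega^n)$ and the convergence $\SSS^*(\omega^n)\to L^*(\omega)$ available, so that the $\liminf$/$\limsup$ bookkeeping closes up. The argument uses only Axioms~\ref{ax:order}--\ref{ax:norm}, matching the generality of Theorem~\ref{thm:levy}: both Theorem~\ref{thm:expectation} and Theorem~\ref{thm:levy} hold without $\sigma$-subadditivity, and everything else is monotonicity, scaling, and finite subadditivity. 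The only delicate points are the $\pm\infty$ bookkeeping (verifying that $\eta\ge0$ and $\eta-L^*=\xi$ hold everywhere under the conventions $0\times\infty:=0$ and $\infty+(-\infty):=\infty$), which is routine.
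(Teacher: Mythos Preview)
Your argument is correct. The paper itself does not prove this statement: the ``proof'' in the (hidden) full version is merely a note to the authors, reading in its entirety ``If this theorem is true, perhaps we might again use the idea of the proof of Theorem VII.4.1 in \cite{shiryaev:1996}. If the theorem is wrong, it might be possible to demonstrate it using the `almost equivalence' of game-theoretic and measure-theoretic probability.'' So you have supplied a genuine proof where the paper only gestures at a possible route.

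Your approach is also cleaner than a transcription of Shiryaev's measure-theoretic argument would be. Rather than truncating $\xi$ and controlling the error via uniform integrability (which has no direct game-theoretic analogue here), you exploit Theorem~\ref{thm:expectation} to produce a \emph{convergent} dominating supermartingale $\SSS^*\in\mathbf{L}$ for $\xi^-$, define the almost-surely-finite majorant $L^*=\lim_n\SSS^*(\omega^n)\ge\xi^-$, and then apply Theorem~\ref{thm:levy} to the nonnegative $\eta=\xi+L^*$. The key observation---that $\SSS^*$ simultaneously witnesses $\UpperExpect(L^*\givn\omega^n)\le\SSS^*(\omega^n)$ and converges to $L^*(\omega)$---is exactly what closes the $\liminf$ bookkeeping, and it is available precisely because Theorem~\ref{thm:expectation} lets you work in the class $\mathbf{L}$. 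All steps use only Axioms~\ref{ax:order}--\ref{ax:norm} (the finite union of two null sets is null by subadditivity of $\UpperExpect$, no $\sigma$-subadditivity needed), so the generality matches that of Theorem~\ref{thm:levy}. The $\pm\infty$ bookkeeping you flag is indeed routine under the paper's convention $\infty+(-\infty):=\infty$; in particular, the case $\UpperExpect(\xi\givn\omega^n)=-\infty$ cannot arise when $\UpperExpect(L^*\givn\omega^n)<\infty$, since $\eta\ge0$ forces $\UpperExpect(\eta\givn\omega^n)\ge0$.
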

  \begin{proof}
    If this theorem is true,
    perhaps we might again use the idea of the proof of Theorem VII.4.1
    in \cite{shiryaev:1996}.
    If the theorem is wrong,
    it might be possible to demonstrate it
    using the ``almost equivalence'' of game-theoretic and measure-theoretic
    probability.
  \end{proof}

  It would be good to add a section
  about connections with the measure-theoretic results.
  Ideally, the measure-theoretic results should be deduced
  from the game-theoretic results.
\blueend\fi

\subsection*{Acknowledgements}

Our thinking about L\'evy's zero-one law
was influenced by a preliminary draft of \cite{bru/eid:2009}.
We are grateful to Gert de Cooman
for his questions that inspired some of the results
in Section \ref{sec:equivalent} of this article\ifJOURNAL. \fi\ifnotJOURNAL,
  and to an anonymous referee of \cite{\Takemura}
  who pointed out to us the zero-one law in \cite{bartfai/revesz:1967}.\fi
This article has benefitted very much from a close reading by its anonymous referee,
whose penetrating comments have led to a greatly improved presentation
(in particular, Lemmas~\ref{lem:referee-1}, \ref{lem:referee-3},
\ref{lem:referee-2}, \ref{lem:referee-4}
and the final statements of Theorem~\ref{thm:levy} and Lemma~\ref{lem:martingale}
are due to him or her)
and helped us correct a vacuous statement in a previous version.
Andrzej Ruszczy\'nski has brought to our attention
the literature on coherent measures of risk.
Our work has been supported in part by EPSRC grant EP/F002998/1.

\end{document}